\date{}
\DeclareMathOperator*{\argmax}{arg\,max}
\DeclareMathOperator*{\argmin}{arg\,min}
\DeclarePairedDelimiterXPP{\exof}[1]{\ex}{[}{]}{}{
 #1}
\newcommand*{\defeq}{\mathrel{\vcenter{\baselineskip0.5ex \lineskiplimit0pt
                     \hbox{\scriptsize.}\hbox{\scriptsize.}}}%
                     =}
\newcommand{\Real}{\mathbb{R}}
\newcommand{\X}{\mathcal X}
\newcommand{\Y}{\mathcal Y}
\renewcommand{\S}{\mathcal S}
\newcommand{\N}{\mathcal N}
\newcommand{\R}{\mathcal{R}}
\newcommand{\cgo}{\textnormal{\textsl{CGO}}\xspace}
\newcommand{\ocgo}{\textnormal{\textsl{oCGO}}\xspace}
\newcommand{\cgd}{\textnormal{\textsl{CGD}}\xspace}
\newcommand{\gda}{\textnormal{\textsl{GDA}}\xspace}
\newcommand{\omda}{\textnormal{\textsl{OMDA}}\xspace}
\newcommand{\mda}{\textnormal{\textsl{MDA}}\xspace}
\newcommand{\agda}{\textnormal{\textsl{AGDA}}\xspace}
\newcommand{\Reals}{\mathbb{R}}
\newtheorem{lemma}{Lemma}[section]
\newtheorem{example}{Example}[section]
\newtheorem{theorem*}{Theorem}[section]
\newtheorem{definition}{Definition}[section]
\newtheorem{theorem}{Theorem}
\newtheorem{proofsk*}{Proof Sketch}
\title{Competitive Gradient Optimization}
\author{%
Abhijeet Vyas\\
  Purdue University\\
  \texttt{vyas26@purdue.edu} 
\and
Kamyar Azizzadenesheli\\
  Purdue University\\
  \texttt{kamyar@purdue.edu} \\
}
\begin{document}
\doparttoc 
\faketableofcontents 

\maketitle

\begin{abstract}
We study the problem of convergence to a stationary point in zero-sum games. We propose competitive gradient optimization (\cgo), a gradient-based method that incorporates the interactions between the two players in zero-sum games for optimization updates. We provide continuous-time analysis of \cgo and its convergence properties while showing that in the continuous limit, \cgo predecessors degenerate to their gradient descent ascent (\gda) variants. We provide a rate of convergence to stationary points and further propose a generalized class of $\alpha$-coherent function for which we provide convergence analysis. We show that for strictly $\alpha$-coherent functions, our algorithm convergences to a saddle point. Moreover, we propose optimistic \cgo (\ocgo), an optimistic variant, for which we show convergence rate to saddle points in $\alpha$-coherent class of functions.

\end{abstract}
\textit{Keywords: Competitive optimization, gradient, zero-sum game.
}

\section{Introduction}

We study the zero-sum simultaneous two-player optimization problem of the following form,
\begin{align}
\min_{x\in \mathcal{X}} f(x,y),~~~\max_{y\in \mathcal{Y}} f(x,y)
\label{eq:minimax}
\end{align}

where $x$ and $y$ are players moves with $\mathcal{X}\subseteq \Real^m,\mathcal{Y}\subseteq \Real^n$ and $f$ is a scalar value map from $\mathcal{X}\times \mathcal{Y} \rightarrow \Real$. Such an optimization problem, also known as minimax optimization problems, has numerous applications in machine learning and decision theory, some examples include competitive Markov decision processes~\citep{comdp}, e.g., game of StarCraft and Go~\citep{starcraft,go}, adversarial learning and robustness learning~\citep{adv1,namkoong2016stochastic,adv2}, generative adversarial networks (GAN)~\citep{goodfellow2014generative,radford2015unsupervised,arjovsky2017wasserstein}, and risk assessment~\citep{artzner1999coherent}.

Gradient descent ascent (\gda) is the standard first-order method to approach the minimax optimization problem in Eq.~\eqref{eq:minimax} and is known to converge for  \textit{strictly}-coherent functions~\citep{mertikopoulos2018optimistic} which subsumes the  \textit{strictly} convex-concave function class~\citep{facchinei2003finite}. Yet, \gda cycles or diverges on simple functions with interactive terms between the players, e.g., a function like $f(x,y) = y^\top x$~\citep{mertikopoulos2018optimistic}. To tackle this issue, \citet{schafer2019competitive} proposes competitive gradient descent (\cgd) which includes the bi-linear approximation of the function as opposed to only the linear approximation used in \gda to formulate the local update. In this approach, despite being bilinear, the game approximation per player is linear. With this update, \cgd is able to utilize the interaction terms to guarantee convergence in some non-convex concave problems rather than be impeded by them.

\citet{daskalakis2018training} proposes to extend the online learning algorithm optimistic mirror descent ascent (\omda) \citep{rakhlin2013online} to two player games and shows convergence of the method for all bi-linear games of the form $f(x,y) = y^\top A x$ (thereby for $y^\top x$). \citet{mertikopoulos2018optimistic} uses the extra-gradient version of \omda to show convergence for all coherent saddle points which includes the saddle points in bi-linear-games of the form $f(x,y) = y^\top A x$ . However, we show, \cgd and \omda (as defined in \citep{mertikopoulos2018optimistic}) reduce to \gda and mirror descent ascent (\mda) respectively in the continuous-time limit (gradient-flow). The continuous-time regime has given insights into the behavior of single-player optimization algorithms~\citep{wilson1611lyapunov,lee2016gradient} and has been used to study games in~\citep{mazumdar2020gradient}.

We propose competitive gradient optimization (\cgo), an optimization method that incorporates players' interaction in order to come up with gradient updates. \cgo considers local linear approximation of the game and introduces the interaction terms in the linear model. At an iteration point $(x,y)$, the \cgo update is as follows,
\begin{align}\label{eqn:localgame0}
    \begin{split}
    \argmin_{\Delta x \in \mathcal{X}} \Delta x^{\top} \nabla_x f &+\frac{\alpha}{\eta} \Delta x^{\top} \nabla_{xy}^2 f\Delta y + \Delta y^{\top} \nabla_y f + \frac{1}{2\eta} \Delta x^{\top} \Delta x \\
    \argmax_{\Delta y \in \mathcal{Y}}  \Delta y^{\top} \nabla_y f &+\frac{\alpha}{\eta} \Delta y^{\top} \nabla_{yx}^2 f \Delta x + \Delta x^{\top} \nabla_x f - \frac{1}{2\eta} \Delta y^{\top} \Delta y.
    \end{split}
\end{align}
where the first term in the update is a local linear approximation of the game. The second term is the interaction term between players which is scaled with $\alpha$ to represent the importance of incorporating the interaction in the update. This scaling analogs to scaling in Newton methods with varying learning rates. This approximation results in a local bi-linear approximation of the game. And finally, $\eta$ is the learning rate appearing in the penalty term. It is important noting that fixing the other player, the optimization for each player is a linear approximation of the game. Since the game approximation for each player is linear in its action, we consider this update yet a linear update. The solution to the \cgo update is the following,
\begin{align}
\begin{bmatrix}
\Delta x\\
\Delta y
\end{bmatrix} &=-\eta g_\alpha := -\eta \begin{bmatrix}
I & \alpha \nabla_{xy}\\
-\alpha \nabla_{yx}f & I
\end{bmatrix}^{-1}\begin{bmatrix}
\nabla_x f\\
-\nabla_y f 
\end{bmatrix} 
\end{align}
where $g_\alpha$ is the gradient update at point $(x,y)$. \cgo is a generalization of its predecessors, in the sense that, setting $\alpha=0$ recovers \gda, and setting $\alpha=\eta$ recovers \cgd. \cgo gives greater flexibility for the updates in the hyper-parameters and gives rise to a distinct algorithm in continuous-time. In large-scale practical and deep learning settings, this update can be efficiently and directly computed using an optimized implementation of conjugate gradient and Hessian vector products.

Further, we introduce generalized versions of the Stampacchia and Minty variational inequality~\citep{facchinei2003finite} and extend the definition of coherent saddle points~\citep{mertikopoulos2018optimistic} to $\alpha$-coherent saddle points and show the convergence of \cgo  under $\alpha$-coherence. Finally, we propose optimistic \cgo which converges to the saddle points for $\alpha$-coherent saddle point problems which are not  \textit{strictly} $\alpha$-coherent. 

Our main contributions are as follows:

\begin{itemize}
    \item We propose \cgo that utilizes bi-linear approximation of the game in Eq.~\eqref{eq:minimax} and accordingly weights the interaction terms between agents in the updates.
    \item In order to study whether \cgo provides a fundamentally new component, we study \cgo's and its predecessors' behaviors in continuous-time. We observe that in the limit of the learning rate approaching zero, i.e., continuous-time regime, the \cgd and \omda \textit{reduce} to their \gda and \mda counterparts and \cgo gives rise to a \textbf{distinct update in the continuous-time.}
    \item Using the standard Lyapunov analysis machinery, we show that \cgd and \gda convergence for \textit{strictly convex-concave functions}, and \cgo allows for \textbf{arbitrary negative eigen-values} in the pure hessian of the minimizer and \textbf{arbitrary positive values} for the maximizer.
    \item We extend the definition of coherence function class~\citep{mertikopoulos2018optimistic} to $\alpha$-coherent functions for which we show the optimistic variant of \cgo, optimistic competitive gradient optimization \ocgo converges to saddle points with a desired rate while\textit{ \cgo} converges to the saddle points which satisfy the \textit{strict $\alpha$-coherence} condition. We provide families of functions satisfying the above.
\end{itemize}

\section{Related works}

Largely the algorithms proposed to solve the minimax optimization problem can be divide into 2-parts, those containing simultaneous update which solve a simultaneous game locally at each iteration and those containing sequential updates. While our work focuses on the simultaneous updates, sequential updates are relevant due to their close proximity and the fact that they often time gives rise to relevant solutions. We discuss the work done in the 2 aforementioned categories below:  

\paragraph{Sequential updates}
A sequential version \gda in an alternating form is alternating gradient descent ascent (\agda) is often time shown to be more stable than its simultaneous counter-part~\citep{gidel2019negative,bailey2020finite}. \citet{yang2020global} introduces the 2-sided Polyak-Lojasiewicz (PL)-inequality. The PL-inequality was first introduced by \citet{polyak1963gradient} as a sufficient condition for gradient descent to achieve a linear convergence rate, \citet{yang2020global} shows that the same can be extended to achieve convergence of \agda to saddle points, which are the only stationary points for the said functions. Yet, \agda also cycles in several problems including bi-linear functions showing the persist difficulty of cycling behavior for any \gda algorithm. To solve this problem, 2-time scale gradient descent ascent is proposed~\citep{heusel2017gans,goodfellow2014generative,metz2016unrolled,prasad2015two} which use different learning rates for the descent and ascent. \citet{heusel2017gans} proves its convergence to local nash-equilibrium (saddle points). \citet{jin2020local} discusses the limit points of 2-time scale \gda by defining local minimax points, analogs of the local nash-equilibrium in the sequential game setting and shows that for vanishing learning rate for the descent, 2-time scale \gda provably converges to local mini-max points. Another line of work concerns itself with finding stationary points of the function $F(x) = \max_y f(x,y)$, \citep{lin2020gradient,rafique1810non,nouiehed2019solving,jin2019minmax}. The 2-time scale approaches mainly rely on the convergence of one player per update step of the other player, which makes these updates generally slow to converge.

\paragraph{Simultaneous updates}

Simultaneous update methods preserve the simultaneous nature of the game at each step, such methods include \omda \citep{daskalakis2018training}, its extra-gradient version~\citep{mertikopoulos2018optimistic}, ConOpt~\citep{mescheder2017numerics}, \cgd~\citep{schafer2019competitive}, LOLA~\citep{foerster2017learning}, predictive update~\citep{yadav2017stabilizing} and symplectic gradient adjustment~\citep{balduzzi2018mechanics}. Of the above, \citep{daskalakis2018training,mertikopoulos2018optimistic,foerster2017learning} are inspired from no-regret strategies formulated in \citep{rakhlin2013online,jadbabaie2015online} based on follow the leader \citep{shalev2006convex,grnarova2017online} for online learning. \citep{schafer2019competitive} uses the cross-term of the Hessian, while \citep{mescheder2017numerics} uses the pure terms to come up with a second order update. \citep{balduzzi2018mechanics} proposes an update based on the asymmetric part of the game Hessian obtained from its Helmholtz decomposition. Some of these algorithms converge to stationary points that need not correspond to saddle points, \citet{daskalakis2018limit} shows that \gda, as well as optimistic \gda, may converge to stationary points which are not saddle points. ConOpt is shown to converge to stationary points which are not local Nash equilibrium in the experiments~\citep{schafer2019competitive}.

\section{Preliminaries}
In this section, we describe the simultaneous minimax optimization problem and notations to express the properties of functions we use in the analysis. We discuss the class of $\alpha$-coherent functions which extends the definition of coherence in \cite{mertikopoulos2018optimistic} and for different versions of which \cgo and  \ocgo converge to the saddle point.

Throughout the paper we often denote the concatenation of the arguments $x$ and $y$ to be $z := (x,y)$.

\begin{definition}[First order stationary point]
A point $z^*=(x^*,y^*)\in \mathcal{X}\times\mathcal{Y}$ is a stationary point of the optimization Eq.~\eqref{eq:minimax} if it satisfies the following,
\begin{equation}
\begin{aligned}
\nabla_x f(x^*,y^*) = \textbf{0}, \nabla_y f(x^*,y^*) = \textbf{0}
\end{aligned}
\end{equation}
\end{definition}


We say a function $f$ is $L$ Lipschitz continuous if for any two points $z_1:=(x_1,y_1)\in  \mathcal{X}\times\mathcal{Y}$ and $z_2=:(x_2,y_2) \in  \mathcal{X}\times\mathcal{Y}$, it satisfies
$$
| f(z_1)- f(z_2)| \leq L \|z_1-z_2\|_2 \\
$$
where $|\cdot|$ denote the absolute value and $\|\cdot\|_2$ denote the corresponding $2$-norm in the product space $\mathcal{X}\times\mathcal{Y}$. Similarly, for a given function $f$, we say it has $L'$-Lipschitz continuous gradient if for any two points $z_1:=(x_1,y_1)\in  \mathcal{X}\times\mathcal{Y}$ and $z_2=:(x_2,y_2) \in  \mathcal{X}\times\mathcal{Y}$, it satisfies
$$
\|\nabla  f(z_1)-\nabla f(z_2)\|_2 \leq L' \|z_1-z_2\|_2
$$
And finally, we say a function has $(L_{xx},L_{yy},L_{xy})$-Lipschitz continuous Hessian, if similarly, for any two points $z_1:=(x_1,y_1) \in  \mathcal{X}\times\mathcal{Y}$ and $z_2:=(x_2,y_2) \in \mathcal{X}\times\mathcal{Y}$ the followings hold,
\begin{align*}
 \|\nabla_{xx}^2  f(z_1)-\nabla_{xx}^2 f(z_2)\|_2 \leq L_{xx} \|z_1-z_2\|_2\\
\|\nabla_{yy}^2  f(z_1)-\nabla_{yy}^2 f(z_2)\|_2 \leq L_{yy} \|z_1-z_2\|_2\\
\|\nabla_{xy}^2  f(z_1)-\nabla_{xy}^2 f(z_2)\|_2 \leq L_{xy} \|z_1-z_2\|_2   
\end{align*}
where all the norms are $2$-norms with respect to their corresponding suitable definition of native spaces.\\

We present the notation for the minimum and maximum value of matrices derived from the Hessian of $f$. The extremums are evaluated over the complete domain of $f$.
\begin{table}[H]
  \caption{Eigenvalue notations for matrices derived from 2\textsuperscript{nd} derivates to simplify notation}
  \label{sample-table}
  \centering
  \begin{tabular}{lcc}
    \toprule
    Matrix     & Minimum Eigenvalue       & Maximum Eigenvalue  \\
    $\nabla_{xx}^2f$ & $\underline{\lambda_{xx}}$ & $\overline{\lambda_{xx}}$     \\
    \midrule
    $\nabla_{yy}^2f$ & $\underline{\lambda_{yy}}$ & $\overline{\lambda_{yy}}$     \\
    \midrule
    $\nabla_{xy}^2f\nabla_{yx}^{2} f$     & $\underline{\lambda_{xy}}$       &
    $\overline{\lambda_{xy}}$  \\
    \midrule
    $\nabla_{yx}^{2}f\nabla_{xy}^2 f$     & $\underline{\lambda_{yx}}$       & $\overline{\lambda_{yx}}$  \\    \bottomrule
  \end{tabular}
\end{table}
Further we define $\overline{\lambda_{1}}=\max(\overline{\lambda_{xx}},-\underline{\lambda_{yy}}),\overline{\lambda_{2}}=\max(\overline{\lambda_{xx}},\overline{\lambda_{yy}})$. We also have $\underline{\lambda_{xy}},\underline{\lambda_{yx}}\geq 0$ since $\nabla_{xy}^2f\nabla_{yx}^{2} f,\nabla_{yx}^2f\nabla_{xy}^{2} f$ are positive semi-definite.

\begin{definition}[Bregman Divergence]
The Bregman divergence with a strongly convex and differentiable potential function h is defined as $$\mathcal{B}_h(x,y) = h(x)-h(y)-\langle x-y,\nabla h(y) \rangle$$
\end{definition}

\paragraph{Saddle point (SP)} We define the solutions of the following problems to be $\min-\max$ and $\max-\min$ saddle points respectively,
\begin{equation}\label{Saddle Point1}
  \hspace{-5cm} \bullet~~\textit{$\min-\max$ saddle point:}~~~~~~~~~\min_{x\in \mathcal{X}} \max_{y\in \mathcal{Y}} f(x,y)
\end{equation}
\begin{equation}\label{Saddle Point2}
  \hspace{-5cm} \bullet~~\textit{$\max-\min$ saddle point:}~~~~~~~~~\max_{x\in \mathcal{X}} \min_{y\in \mathcal{Y}}  f(x,y)
\end{equation}
We now introduce modified forms of the Stampacchia and Minty variational inequalities and present the definition of $\alpha$-coherent saddle point problems.
\begin{definition}[$\alpha$-Variational inequalities]
$\alpha$-coherence generalizes the definition of coherent saddle points in \cite{mertikopoulos2018optimistic} which sets $\alpha$ to zero. The definition of $\alpha$-coherence hinges on the following two variational inequalities ($g_{\alpha}$ as in Eq.~\eqref{eqn:localgame0}),

\begin{itemize}
    \item $\alpha-MVI$ : $g_{\alpha}(x,y)^\top(z-z^*) \geq 0$ for all  $z : (x,y) \in \mathcal{X}\times\mathcal{Y}$
    \item $\alpha-SVI$ : $g_{\alpha}(x^*,y^*)^\top(z-z^*) \geq 0$ for all  $z: (x,y) \in \mathcal{X}\times\mathcal{Y}$
\end{itemize}
\end{definition}
 
\begin{definition}[$\alpha$-coherence]\label{coherencedef}
We say that $\min-\max$ SP problem is $\alpha$-coherent if, 
\begin{itemize}
\item  Every solution of $\alpha-SVI$ is also a $\min-\max$ SP.
\item  There exists a $\min-\max$ SP, p that satisfies $\alpha-MVI$
\item  Every $\min-\max$ SP, $(x^*,y^*)$ satisfies $\alpha-MVI$ locally, i.e., for all $(x,y)$ sufficiently close to $(x^*,y^*)$
\end{itemize}
The $\alpha$-coherent $\max-\min$ SP problem is defined similarly.

\end{definition}

In the above, if $\alpha-MVI$ holds as a strict inequality whenever x is not a solution thereof, SP problem will be
called \textit{strictly} $\alpha$-coherent; by contrast, if $\alpha-MVI$ holds as an equality for all $(x,y) \in \mathcal{X}\times\mathcal{Y}$, we will say that the SP problem is null $\alpha$-coherent.

\section{Motivation}\label{Section4}

In this section, we present the main motivations of our approach. The first is the popularity of the damped Newton method~(Algorithm 9.5, \citep{boyd2004convex}) which scales the second-order term in the Taylor-series expansion of the function to come up with the local update. The second is the observation that both \cgd and \omda reduce to \gda and \mda in the continuous-time limit which calls for a new algorithm that is distinct from \gda and \mda in continuous-time. The third is the observation that several functions give rise to (SP) problems which are \textit{strictly} $\alpha$-coherent$,\quad\forall \alpha>0$ but not \textit{strictly}-coherent as defined in \cite{mertikopoulos2018optimistic} which coincides with $\alpha$-coherence when we set $\alpha=0$

\subsection{Adjustable learning rate Newton method}
The celebrated Newton method in one player optimization gives rise to an update which is the solution to the following local optimization problem; For a function $f:\mathcal{X}\rightarrow \Real,x\in \mathcal{X},\mathcal{X} \subseteq \Real^m$, we have, 
\begin{equation}\label{newton}
    \begin{aligned}
          \min_{\Delta x\in \mathcal{X}} \nabla f^\top \Delta x+ \frac{1}{2}\Delta x^\top \nabla_{xx}^2 f\Delta x
    \end{aligned}
\end{equation}
which does not have the notion of learning rate. However, prior work provides strong learning and regret guarantees, even in adversarial cases, for the adjusted Newton method where the Newton term is replaced with its weighted version $\frac{\alpha}{2}\Delta x^\top \nabla_{xx}^2 f\Delta x$~\citep{hazan2007logarithmic}. This scaling allows for different learning updates that adjust how much the update weighs the second term. This is a similar approach taken in \cgo update.

\subsection{Continuous-time version of \cgd and \omda}
In this section, we analyze the continuous-time versions of \cgd, \omda, and \cgo. We show that while, in continuous-time, \cgd and \omda reduce to their \gda and \mda counterparts, \cgo gives rise to a distinct update. 

\paragraph{CGD}
Following the discussion in the introduction, \cgd can be obtained by setting $\alpha=\eta$ in the \cgo update rule. Doing so in Eq.~\eqref{eqn:localgame0} we obtain,
\begin{align}
    \label{eqn:cgd}
    &\Delta x = -\eta \left( I+ \eta^2 \nabla_{xy}^2f \nabla_{yx}^2 f \right)^{-1}  
                \left( \nabla_{x} f +\eta \nabla_{xy}^2f  \nabla_{y} f \right) \\
    &\Delta y = -\eta \left( I +\eta^2 \nabla_{yx}^2f \nabla_{xy}^2 f \right)^{-1}  
                \left( -\nabla_{y} f +\eta \nabla_{yx}^2f  \nabla_{x} f \right),
    \end{align}
For the continuous-time analysis, the learning rate $\eta$ corresponds to the time discretization $\Delta t$ with scaling factor $\beta$, i.e., $\eta = \beta\Delta t$. The ratios of the changes in $x$ := $\Delta x$ and $y$ := $\Delta y$ to $\eta$ then become the time derivative of $x$ and $y$ in the limit $\eta \rightarrow 0$. Ergo, for \cgd update we obtain,
\begin{align}
    \label{eqn:contcgd}
    &\dot x = - \beta\nabla_{x} f\\
    &\dot y = \beta\nabla_{y} f
    \end{align}
Where $\dot x = \frac{\mathrm{d}x}{\mathrm{d}t}, \dot y = \frac{\mathrm{d}y}{\mathrm{d}t}$ are the time derivatives of $x$ and $y$. This is the \textit{same} as the update rule for \gda and the interaction information is lost in continuous-time.

\paragraph{OMDA}
To present the updates for \omda we first define the proximal map,
\begin{equation}
\label{eq:prox}
P_{z}(p)
	= \arg\min_{z'\in \mathcal{X}\times \mathcal{Y}} \{\langle p,z - z'\rangle + \mathcal{B}_h(z', z)\}
\end{equation}
Where $\mathcal{B}_h$ is the Bregmann Divergence with the potential function $h$. The \omda update rule is then given by:
\begin{align}
    \label{omd}
    &z_{n+\frac{1}{2}} = P_{z_n}(-\eta g_n) = \nabla h^{-1}(\nabla h(X_n)-\eta g_n) = z_n-\eta \nabla (\nabla h^{-1})^{\top}g_n+o(\eta g_n)\\
    &z_{n+1} = P_{z_{n}}(-\eta g_{n+\frac{1}{2}}) = \nabla h^{-1}(\nabla h(z_{n})-\eta g_{n+\frac{1}{2}}) = z_n-\eta \nabla (\nabla h^{-1})^{\top}g_{n+\frac{1}{2}}+o(\eta g_{n+\frac{1}{2}})
    \end{align}
Where $g_n,g_{n+\frac{1}{2}}$ are the vector $(\nabla_x f(x,y), -\nabla_y f(x,y))$ evaluated at $z_n,z_{n+1}$ respectively. We now analyze the updates of \omda in continuous-time. In the limit $\eta \rightarrow 0$ we have $\frac{\partial z}{\partial t} =- \beta(\nabla(\nabla h)^{-1}(z))^{\top} \nabla f(z) $ which is the same as the update rule of \mda and the effect of half time stepping vanishes in continuous-time. 

\paragraph{CGO}
Taking the continuous-time limit $\eta \rightarrow 0$ of the \cgo updates Eq.~\eqref{eqn:localgame0} we obtain,
\begin{align}
    \label{eqn:conttime}
    \begin{split}

    &\dot x = - \beta\left( I+ \alpha^2 \nabla_{xy}^2f \nabla_{yx}^2 f \right)^{-1}  
                \left( \nabla_{x} f +\alpha \nabla_{xy}^2f  \nabla_{y} f \right) \\
    &\dot y = -\beta\left( I +\alpha^2 \nabla_{yx}^2f \nabla_{xy}^2 f \right)^{-1}  
                \left( -\nabla_{y} f +\alpha \nabla_{yx}^2f  \nabla_{x} f \right),
        
    \end{split}
\end{align}

which is a distinct update from \gda and the interaction information is preserved in continuous-time. We simulat\footnote{The code for all simulations can be found at \href{https://github.com/AbhijeetiitmVyas/CompetitiveGradientOptim}{Link to code}} the continuous-time setting by using a very small learning rate and observe that while \cgd cycles around the origin (Figure~\eqref{figure:cgdxy}), \cgo is able to take a somewhat direct path to the saddle point solution (Figure \eqref{figure:cgoxy}). This is an encouraging experiment, validating our hypothesis on the importance of \cgo update.

\begin{figure}
\centering
\subcaptionbox{\cgd\label{figure:cgdxy}}
[.36 \textwidth]{\includegraphics[width=0.8\linewidth]{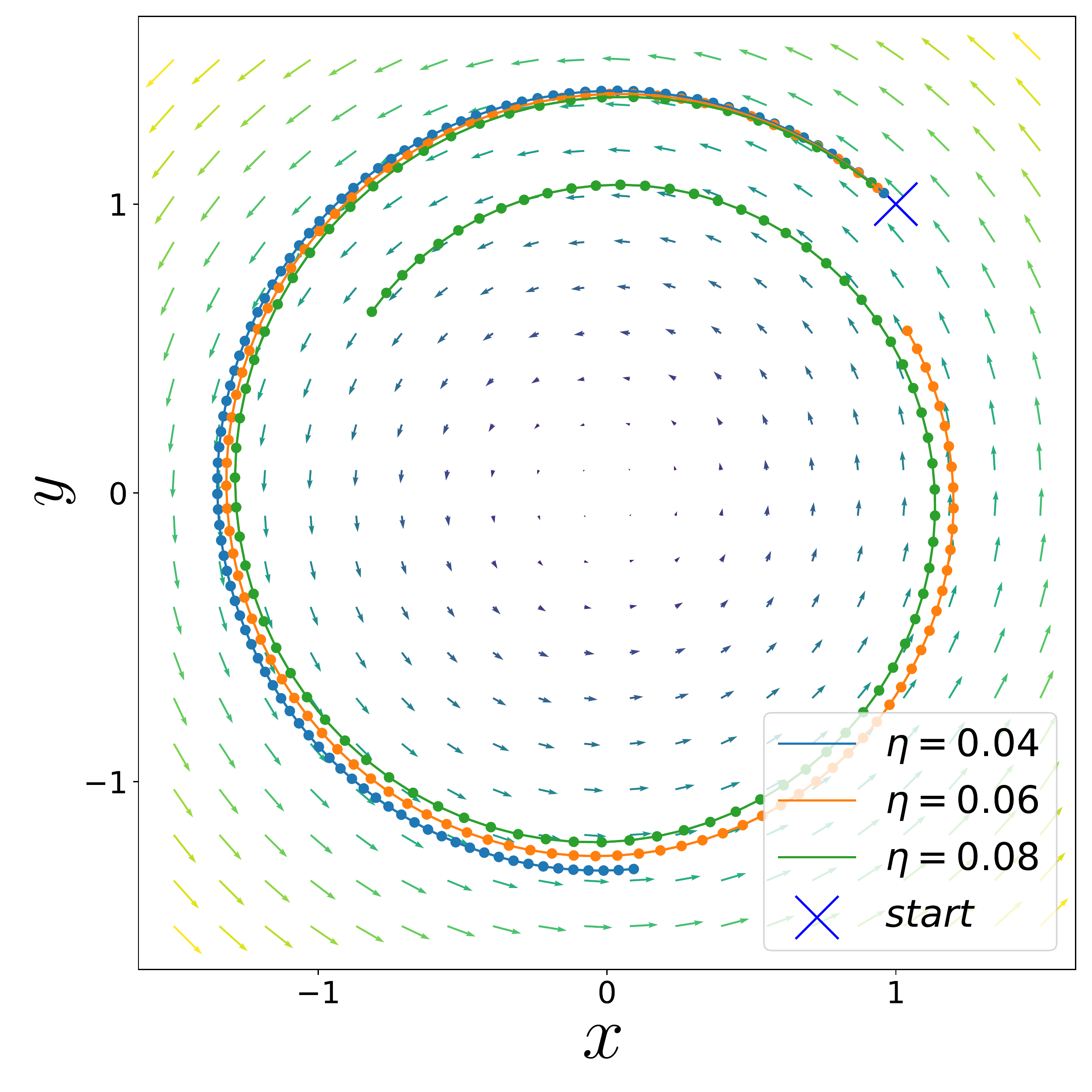}}
\subcaptionbox{\cgo\label{figure:cgoxy}}
[.36 \textwidth]{\includegraphics[width=0.8\linewidth]{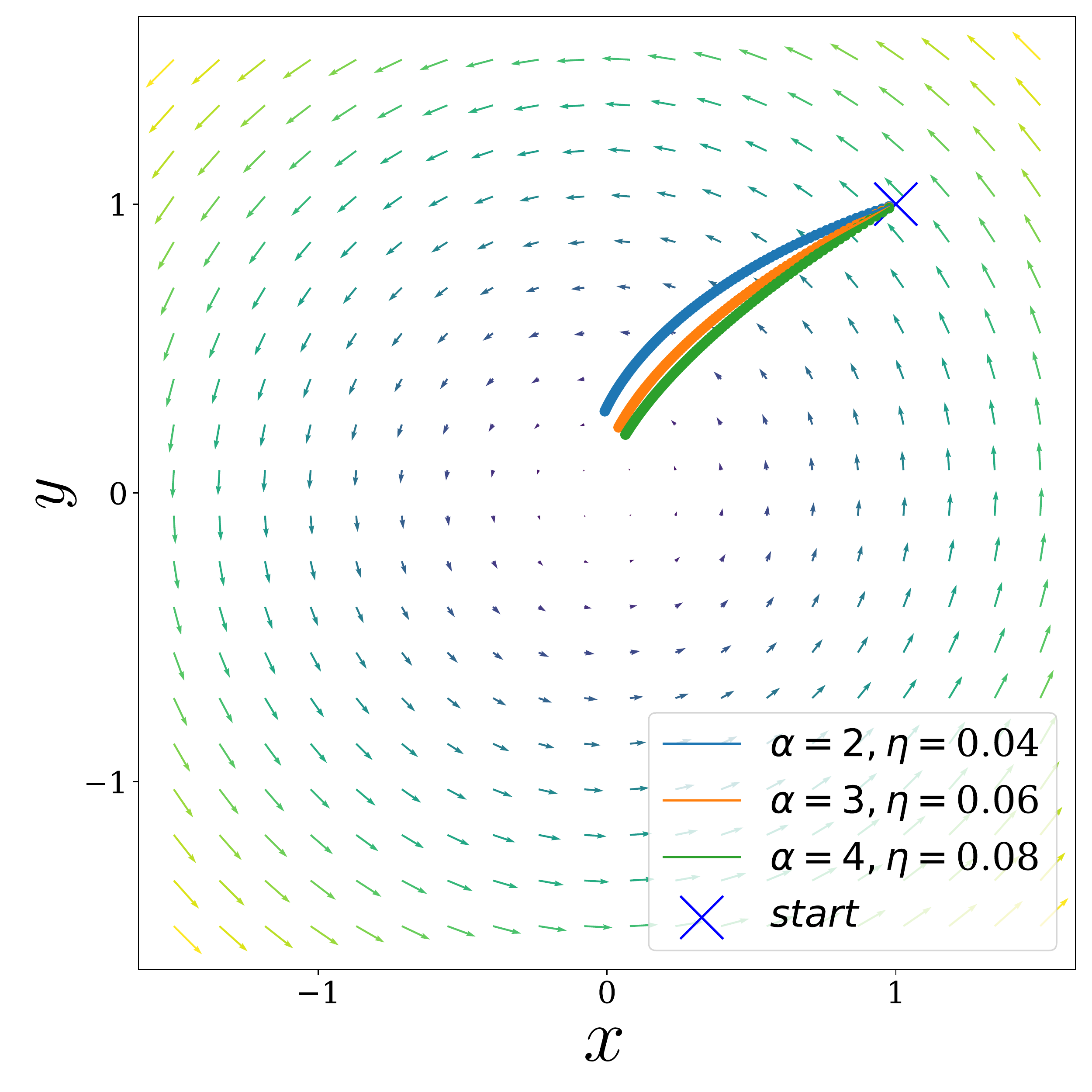}}
\caption{Modeling of the continuous-time regime : \cgd cycles while \cgo takes a direct path}
\end{figure}

\subsection{Families of functions which give rise to $\alpha$-coherent SP}
\label{subseq:example_functions}

The following examples establish a few families of $\alpha$-coherent functions. First we present the important result that all bi-linear games $f=x^\top A y$, are \textit{strictly} $\alpha$-coherent. 

\begin{example}\label{eg1}
All functions of the form $f(x,y) = x^{\top}Ay,A\in \Real^{m\times n}$, give rise to \textit{strictly} $\alpha$-coherent $\min-\max$ SP problems $\forall \alpha>0$ and are null coherent for $\alpha=0$.
\end{example}
\begin{proof}[Proof Sketch]
The origin is the only saddle point of the above function, we evaluate SVI and $\alpha$-SVI at the origin,\\

i) We have $\langle g_0,z \rangle$, $g_0 = (Ay,-A^\top x)$.
Hence, $\langle g_0,z \rangle = x^\top Ay-y^\top A^\top x = 0,\quad\forall ~(x,y) \in \mathcal{X}\times \mathcal{Y}$

ii) Also we have:
\begin{equation}
    \langle g_{\alpha},z \rangle \geq \alpha \lambda_{\min}((I+\alpha^2AA^\top)^{-1} A A^\top) \|x\|^2+\alpha\lambda_{\min}((I+\alpha^2A^\top A)^{-1} A^\top Ay)\|y\|^2>0
\end{equation}

Where the final inequality follows from the fact that $\min(\lambda_{\min}(A^\top A),\lambda_{\min}(AA^\top))>0,\quad\forall A\in \Real^{m\times n}$. See \eqref{eg1a} for a detailed proof.

\end{proof}

We present another family of functions parameterized by a scalar $k$. For $k\geq 0$ the functions exhibit a $\min-\max$ saddle point at the origin (a $\max-\min$ saddle point is at ($\infty,-\infty$)), while for $k<0$ the function has a $\max-\min$ saddle point at the origin (a $\min-\max$ saddle point is at ($-\infty,\infty$)). For both cases, origin satisfies the $\alpha$-variational inequalities for $\alpha\geq-k$, strictly for $\alpha>k$.

\begin{example}\label{eg2}
The family of functions $f_{k}(x,y) = \frac{k}{2}(x^2-y^2)+xy$ with $k\geq0$ gives rise to an $\alpha$-coherent $\min-\max$ SP problem for $\alpha = -k$ and a \textit{strictly} $\alpha$-coherent $\min-\max$ SP problem $\forall \alpha >- k$. For $k<0$, it gives rise to an $\alpha$-coherent $\max-\min$ SP problem for $\alpha = -k$ and a \textit{strictly} $\alpha$-coherent $\max-\min$ SP problem $\forall \alpha >- k$
\end{example}
\begin{proof}[Proof Sketch]
We evaluate the variational inequalities at the origin,\\
For $g_{0}$ we have:

\begin{equation}
    \begin{aligned}
        \langle g_0,z \rangle =~ &x(kx+y)-y(-ky+x)
        = k x^2 + k y^2 > 0
    \end{aligned}
\end{equation}
 
For $g_{\alpha}$ we have:

\begin{equation}
    \begin{aligned}
        \langle g_{\alpha},z \rangle = \frac{k+\alpha}{1+\alpha^2}(x^2+y^2)>0,\quad\forall \alpha>-k 
    \end{aligned}
\end{equation}

\end{proof}

\section{Convergence results of \cgo and the \ocgo algorithm}\label{Sec:SV}

In this section we present the convergence results of our \cgo algorithm. We first consider the convergence to stationary points and present the conditions and rate for the continuous-time and discrete-time regimes. Then, we state the convergence results of the \cgo algorithm to \textit{strictly} $\alpha$-coherent saddle points. Then, we introduce the \ocgo updates and present its rate of convergence to $\alpha$-coherent saddle points. Finally we showcase the working of \cgo and \ocgo by simulating them on a few benchmark functions from the families presented in subsection~\eqref{subseq:example_functions}. 

\subsection{Convergence analysis in continuous-time}

We present our first result for convergence of \cgo in continuous-time. We present the proof sketch and refer the readers to \eqref{contgda} for the complete proof. To highlight the difference in convergence rate and condition of \cgo from \gda we also derive the conditions for convergence of \gda using a Lyapunov-style analysis. By carefully choosing the parameter $\alpha$ we show that we can accommodate arbitrary deviation from the strictly convex-concave condition which is required for the convergence of continuous-time \gda.

\begin{theorem}\label{contheorem}
Continuous-time \cgo runs on a twice differentiable function $f$ with parameters $\alpha,\beta$ on functions satisfying $\lambda>0$ where
$$\lambda := \beta \min(2 \underline{\lambda_{xx}}-2  \alpha\overline{\lambda_{xx}}^2+ c\frac{\underline{\lambda_{xy}}}{1+\alpha^2\underline{\lambda_{xy}}
},-2 \overline{\lambda_{yy}}-2  \alpha\overline{\lambda_{yy}}^2+ c\frac{\underline{\lambda_{yx}}}{1+\alpha^2\underline{\lambda_{yx}}
})$$
converges exponentially to a stationary point with rate $\lambda$. Where $c=\beta(\alpha-2\alpha^2\overline{\lambda_1}-2\alpha^3\overline{\lambda_2}^2)$.

\end{theorem}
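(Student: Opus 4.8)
The plan is to run a Lyapunov argument with the squared-gradient energy $V := \tfrac12\|\nabla f\|^2 = \tfrac12(\|\nabla_x f\|^2 + \|\nabla_y f\|^2)$ and to show that along the continuous-time \cgo flow of Eq.~\eqref{eqn:conttime} one has $\dot V \le -\lambda V$. A standard comparison (Gr\"onwall) argument then gives $V(t) \le V(0)e^{-\lambda t}$, so $\|\nabla f\|\to 0$ exponentially; since $\|\dot z\|\le \beta\,\|\nabla f\|$ up to a bounded factor coming from the inverse matrices, $\int_0^\infty\|\dot z\|\,\mathrm{d}t<\infty$, the trajectory is Cauchy and converges to a limit $z_\infty$ with $\nabla f(z_\infty)=\textbf{0}$, i.e.\ a stationary point. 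The $\tfrac12\|\cdot\|^2$ normalization is exactly what produces the factor $2$ on the pure-Hessian eigenvalues in $\lambda$: for the scalar model $f=\tfrac{a}{2}x^2$ the flow gives $\dot V=-2\beta a\,V$.

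First I would differentiate $V$ along trajectories via the Hessian chain rule, $\tfrac{d}{dt}\nabla_x f=\nabla_{xx}^2 f\,\dot x+\nabla_{xy}^2 f\,\dot y$ and $\tfrac{d}{dt}\nabla_y f=\nabla_{yx}^2 f\,\dot x+\nabla_{yy}^2 f\,\dot y$, obtaining $\dot V=\nabla_x f^\top(\nabla_{xx}^2 f\,\dot x+\nabla_{xy}^2 f\,\dot y)+\nabla_y f^\top(\nabla_{yx}^2 f\,\dot x+\nabla_{yy}^2 f\,\dot y)$. Abbreviating $B:=\nabla_{xy}^2 f$ (so $\nabla_{yx}^2 f=B^\top$ by symmetry of the Hessian), $M_x:=I+\alpha^2 BB^\top$ and $M_y:=I+\alpha^2 B^\top B$, I substitute the closed-form velocities $\dot x=-\beta M_x^{-1}(\nabla_x f+\alpha B\,\nabla_y f)$ and $\dot y=-\beta M_y^{-1}(-\nabla_y f+\alpha B^\top\nabla_x f)$ and expand $\dot V$ into a quadratic form in $(\nabla_x f,\nabla_y f)$.

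The resulting terms sort into three groups. The diagonal terms $-\beta\,\nabla_x f^\top\nabla_{xx}^2 f\,M_x^{-1}\nabla_x f$ and $+\beta\,\nabla_y f^\top\nabla_{yy}^2 f\,M_y^{-1}\nabla_y f$ give, after replacing $M_x^{-1}$ by $I$ up to an $\Order(\alpha^2)$ remainder, the leading $2\underline{\lambda_{xx}}$ and $-2\overline{\lambda_{yy}}$ contributions (the sign flip on the $y$-block reflecting that the maximizer follows the ascent direction). The $\alpha$-weighted self-coupling terms are where the \cgo correction pays off: using the push-through identity $B\,M_y^{-1}=M_x^{-1}B$ one finds $-\beta\alpha\,\nabla_x f^\top B M_y^{-1}B^\top\nabla_x f=-\beta\alpha\,\nabla_x f^\top M_x^{-1}BB^\top\nabla_x f$, and since $M_x^{-1}BB^\top$ has eigenvalues $\mu/(1+\alpha^2\mu)$ for $\mu\in[\underline{\lambda_{xy}},\overline{\lambda_{xy}}]$ and $t\mapsto t/(1+\alpha^2 t)$ is increasing, this is bounded above by $-\beta\alpha\,\tfrac{\underline{\lambda_{xy}}}{1+\alpha^2\underline{\lambda_{xy}}}\|\nabla_x f\|^2$ (and symmetrically for $\underline{\lambda_{yx}}$). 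Crucially, the same push-through identity makes the purely bilinear cross terms cancel exactly, $\beta\,\nabla_x f^\top B M_y^{-1}\nabla_y f-\beta\,\nabla_y f^\top B^\top M_x^{-1}\nabla_x f=0$, so the only surviving mixed terms are the $\Order(\alpha)$ remainders $-\beta\alpha\,\nabla_x f^\top\nabla_{xx}^2 f\,M_x^{-1}B\,\nabla_y f$ and its transpose, which I would control by Cauchy--Schwarz and Young's inequality together with $\|M_x^{-1}\|\le 1$ and the eigenvalue bounds of Table~\ref{sample-table}. The bound $\|\nabla_{xx}^2 f\,\nabla_x f\|^2\le\overline{\lambda_{xx}}^2\|\nabla_x f\|^2$ is precisely the source of the squared eigenvalue $-2\alpha\overline{\lambda_{xx}}^2$, and the aggregate of all $\Order(\alpha^2),\Order(\alpha^3)$ remainders (bounded uniformly through $\overline{\lambda_1}$ and $\overline{\lambda_2}$) is what assembles into the multiplier $c=\beta(\alpha-2\alpha^2\overline{\lambda_1}-2\alpha^3\overline{\lambda_2}^2)$ in front of the interaction gains. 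Minimizing the two block coefficients then yields $\dot V\le-\lambda V$ with $\lambda$ as stated.

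The main obstacle is the third group. Because $\nabla_{xx}^2 f$ does not commute with $M_x^{-1}$ and the cross-Hessian $B$ is not sign-definite, the surviving mixed terms cannot be diagonalized; the whole argument succeeds only if the Young splits are arranged so that the genuinely helpful interaction terms $c\,\underline{\lambda_{xy}}/(1+\alpha^2\underline{\lambda_{xy}})$ retain a net favorable sign after absorbing the indefinite remainders. Tracking these constants tightly --- showing that the error from replacing $M_x^{-1}$ by $I$ in the diagonal blocks and from splitting the indefinite couplings is controlled by $2\alpha^2\overline{\lambda_1}+2\alpha^3\overline{\lambda_2}^2$ --- is the delicate part, and it is exactly this bookkeeping that fixes how large $\alpha$ may be taken (and how negative $\underline{\lambda_{xx}}$ or how positive $\overline{\lambda_{yy}}$ may be) before the rate $\lambda$ fails to stay positive.
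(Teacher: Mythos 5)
Your proposal follows essentially the same route as the paper's proof: the squared-gradient Lyapunov function (up to a harmless factor of $\tfrac12$), the chain-rule expansion of $\dot V$, the push-through identity $BM_y^{-1}=M_x^{-1}B$ to cancel the purely bilinear cross terms exactly and to extract the interaction gain $-\alpha\beta\,\nabla_x f^\top M_x^{-1}BB^\top\nabla_x f$ with eigenvalues $\mu/(1+\alpha^2\mu)$, and Peter--Paul/Young splits that convert the indefinite mixed terms into the $\overline{\lambda_{xx}}^2,\overline{\lambda_{yy}}^2$ penalties and the $\overline{\lambda_1},\overline{\lambda_2}$ corrections inside $c$. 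The only difference is bookkeeping --- you carry the explicit $M_x^{-1},M_y^{-1}$ throughout while the paper routes the pure-Hessian terms through the implicit relations $\dot x+\alpha\nabla_{xy}^2f\,\dot y=-\beta\nabla_x f$ --- so the argument and the resulting constants are the paper's.
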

\begin{proof}[[Proof Sketch]]
We choose $\|g_0\|^2$ to be our Lyapnuov function where, $$g_0:=(\nabla_x(f(x,y),-\nabla_y f(x,y))$$
Evaluating time derivative of $\|g\|^2$ we obtain :
\begin{equation} \label{dhh}
\begin{aligned}
\frac{\mathrm{d}\|g_0\|^2}{\mathrm{d}t}= 2 g_0^{\top} \dot g_0 &=2\begin{bmatrix}
\nabla_x^\top& -\nabla_y ^\top
\end{bmatrix}\begin{bmatrix}
\nabla_{xx} &  \nabla_{xy}\\
-\nabla_{xy}^{\top} & -\nabla_{yy}
\end{bmatrix}
\begin{bmatrix}
\dot x \\
\dot y
\end{bmatrix}
 \\
 &=2 \dot x^{\top} \nabla_{xx} \nabla_x +  2 \nabla_x^{\top} \nabla_{xy} \dot y + 2 \dot y^{\top} \nabla_{yy} \nabla_y +  2 \nabla_y^{\top} \nabla_{xy}^{\top} \dot x\\ 
\end{aligned}
\end{equation}

By plugging in \cgo updates and manipulating we show:
$$ \frac{\mathrm{d}\|g_0\|^2}{\mathrm{d}t} \leq -\lambda \|g_0\|^2$$ where $\lambda$ is as stated in the Theorem. The detailed proof is in \eqref{conttimecgo}.

To compare, we also derive the conditions for \gda Eq.~\eqref{eqn:contcgd} in continuous-time in \eqref{contgda}. We obtain,

\begin{equation}
    \begin{aligned}
    \frac{\mathrm{d}\|g_0\|^2}{\mathrm{d}t} &\le -\|g_0\|^2 \min (\lambda_{\min}( 2 \beta\nabla_{xx}), \lambda_{\min}(-2\beta \nabla_{yy}))\\&=-2\beta\|g_0\|^2  \min (\underline{\lambda_{xx}},- \overline{\lambda_{yy}})
    \end{aligned}
\end{equation}
For convergence, we require $\min (\underline{\lambda_{xx}},- \overline{\lambda_{yy}})\geq0$ which is the convex-concave condition.
\end{proof}

This Theorem implies that in the present of interaction, particularly, when $\frac{\underline{\lambda_{xy}}}{1+\alpha^2\underline{\lambda_{xy}}
}$ and $\frac{\underline{\lambda_{yx}}}{1+\alpha^2\underline{\lambda_{yx}}
}$ are positive, it allows to break free from the convex-concave condition by appropriately setting $\alpha$.

We set $\alpha$ such that $\overline{\lambda_{xx}} \leq \frac{1}{5\alpha};\underline{\lambda_{xx}} \geq -\frac{1}{5\alpha};\underline{\lambda_{yx}},\underline{\lambda_{xy}}\leq \frac{K}{\alpha^2};\underline{\lambda_{yy}} \geq -\frac{1}{5\alpha};\overline{\lambda_{yy}} \leq \frac{1}{5\alpha};K\gg1$ which implies $\overline{\lambda_{1}},\overline{\lambda_{2}}<\frac{1}{5\alpha}$ and we obtain $\lambda_{min} \geq \frac{1}{50\alpha}$. This shows that continuous-time \cgo allows \textbf{arbitrary deviation} of $\underline{\lambda_{xx}},\overline{\lambda_{yy}}$ (from the convex-concave condition i.e. $\underline{\lambda_{xx}}\geq 0,\overline{\lambda_{yy}}\leq 0$), if
$\underline{\lambda_{yx}},\underline{\lambda_{xy}}$ are proportional to the square of the deviation of the pure terms.

\subsection{Convergence analysis in discrete-time}

\paragraph{Convergence to stationary points}

We derive the conditions required for \cgo to converge to a stationary point and show that large singular values of the interaction terms help in convergence. By tuning the hyperparameters we are able to control the influence of this interactive term and obtain faster convergence.
\begin{theorem}\label{disctheorem}
\cgo with parameters $\alpha$ and $\eta$ when initialized in the neighborhood of a first-order stationary point $z^*$ on a Lipschitz-continuous and thrice differentiable function $f$ that has Lipschitz-continuous gradients and Hessian and $1\geq\lambda>0$ where,  

$$
 \lambda :=  \min(\eta(2\underline{\lambda_{xx}}-2\frac{10\eta+8\alpha}{\eta}\overline{\lambda_{xx}}^2)+c\frac{\underline{\lambda_{xy}}}{1+
\alpha^2\underline{\lambda_{xy}}
}),\\-\eta(2\underline{\lambda_{yy}}+2\frac{10\eta+8\alpha}{\eta}\overline{\lambda_{yy}}^2)+c\frac{\underline{\lambda_{yx}}}{1+
\alpha^2\underline{\lambda_{yx}}
}))   
$$ converges exponentially to $z^*$ with rate $r(\lambda) = 1-\lambda$. Where c is a polynomial function of $\eta,\alpha,\overline{\lambda_1},\overline{\lambda_2}$.
\end{theorem}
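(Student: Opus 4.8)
The plan is to mirror the continuous-time argument behind Theorem~\ref{contheorem}, taking the Lyapunov function $V(z) := \|g_0(z)\|^2$ with $g_0 = (\nabla_x f, -\nabla_y f)$, which vanishes exactly at the stationary point $z^*$. Writing the \cgo step compactly as $z_{n+1} = z_n - \eta M^{-1} g_0(z_n)$, where $M := \left[\begin{smallmatrix} I & \alpha\nabla_{xy}^2 f \\ -\alpha\nabla_{yx}^2 f & I \end{smallmatrix}\right]$ is the matrix from the update~\eqref{eqn:localgame0}, the goal is to prove a one-step contraction $V(z_{n+1}) \leq (1-\lambda)V(z_n)$ and then iterate. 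The hypotheses $0 < \lambda \leq 1$ and initialization near $z^*$ are precisely what make this a genuine contraction with factor $r(\lambda) = 1-\lambda \in [0,1)$.

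First I would Taylor-expand $g_0$ about $z_n$ along the update direction. Because $f$ is thrice differentiable with Lipschitz-continuous Hessian, one gets $g_0(z_{n+1}) = g_0(z_n) - \eta\, J(z_n) M^{-1} g_0(z_n) + R_n$, where $J = \nabla g_0 = \left[\begin{smallmatrix} \nabla_{xx}^2 f & \nabla_{xy}^2 f \\ -\nabla_{yx}^2 f & -\nabla_{yy}^2 f \end{smallmatrix}\right]$ is exactly the Jacobian appearing in the continuous-time computation~\eqref{dhh}, and the remainder satisfies $\|R_n\| = O(\|z_{n+1}-z_n\|^2) = O(\eta^2\|g_0(z_n)\|^2)$ via the Hessian-Lipschitz constants $L_{xx}, L_{yy}, L_{xy}$. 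Squaring then yields
\[
V(z_{n+1}) = V(z_n) - 2\eta\, g_0^\top J M^{-1} g_0 + \eta^2\|J M^{-1} g_0\|^2 + 2\langle g_0 - \eta J M^{-1}g_0,\, R_n\rangle + \|R_n\|^2 .
\]

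The linear-in-$\eta$ term $-2\eta\, g_0^\top J M^{-1} g_0$ is the discrete counterpart of $\tfrac{\mathrm{d}}{\mathrm{d}t}\|g_0\|^2$ in~\eqref{dhh}; bounding it below by the identical block manipulation used for Theorem~\ref{contheorem} produces the principal pure-Hessian terms $\eta(2\underline{\lambda_{xx}} + \cdots)$ and the interaction contributions $c\,\tfrac{\underline{\lambda_{xy}}}{1+\alpha^2\underline{\lambda_{xy}}}$ and its $yx$ analogue. The remaining pieces are discretization overhead: the term $\eta^2\|JM^{-1}g_0\|^2 \leq \eta^2\|J\|^2\|M^{-1}\|^2\|g_0\|^2$ is controlled by the operator-norm bounds encoded in the eigenvalue quantities of Table~\ref{sample-table}, and is precisely what inflates the pure-Hessian coefficient from $2\alpha\overline{\lambda_{xx}}^2$ in continuous time to $2\tfrac{10\eta+8\alpha}{\eta}\overline{\lambda_{xx}}^2$ here; meanwhile the cross-term $2\langle g_0, R_n\rangle = O(\eta^2\|g_0\|^3)$ is higher order near $z^*$, where the neighborhood assumption keeps $\|g_0\|$ small, so it is absorbed into the effective constants collected in the polynomial $c$. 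Collecting all contributions gives $V(z_{n+1}) \leq (1-\lambda)V(z_n)$ with the stated $\lambda$, and iterating yields $V(z_n) \leq (1-\lambda)^n V(z_0)$, i.e.\ exponential convergence to $z^*$.

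The main obstacle I expect is the tight control of $\|M^{-1}\|$ and of the composite $JM^{-1}$, so that the $\eta^2$ and remainder terms do not swamp the linear descent. The favorable structural fact is that the off-diagonal blocks enter antisymmetrically: since mixed partials give $\nabla_{yx}^2 f = (\nabla_{xy}^2 f)^\top$, the matrix $M = I + \alpha N$ has $N$ skew-symmetric, whence $M^\top M = I + \alpha^2 N^\top N \succeq I$ and therefore $\|M^{-1}\| \leq 1$ uniformly over the domain; this is what makes the \cgo matrix well-conditioned and lets the update magnitude satisfy $\|z_{n+1}-z_n\| \leq \eta\|g_0\|$. The delicate part is then the bookkeeping that propagates the eigenvalue bounds $\overline{\lambda_{xx}}, \overline{\lambda_{yy}}, \overline{\lambda_1}, \overline{\lambda_2}$ through $JM^{-1}$ and assembles them into the exact coefficient $\tfrac{10\eta+8\alpha}{\eta}$ while relegating the cubic Taylor remainder to the lower-order polynomial $c$. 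It is here that both the Hessian-Lipschitz smoothness and the local-initialization hypothesis are indispensable: without local smallness of $\|g_0\|$, the remainder would not be dominated by the negative linear term and the one-step contraction would break down.
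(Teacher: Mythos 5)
Your proposal is correct and follows essentially the same route as the paper's proof in Appendix~\eqref{disctimecgo}: the Lyapunov function $\|g_0\|^2$, a Taylor expansion of the gradient field with integral remainders controlled by the Hessian--Lipschitz constants, the linear-in-$\eta$ term handled by the same block manipulations as in Theorem~\ref{contheorem} (via Lemma~\ref{lemma:inverse-transpose} and Peter--Paul), and the local-initialization hypothesis used to absorb the $\|g_0\|$-dependent remainder contributions. Your observation that $M=I+\alpha N$ with $N$ skew-symmetric gives $\|M^{-1}\|\leq 1$ is a slightly cleaner packaging of the step-size bound the paper derives component-wise in Eqs.~\eqref{cgoprelimbound}--\eqref{intermediaterem}, but it does not change the argument.
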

Similar to the continuous-time setting, the terms $\frac{\underline{\lambda_{xy}}}{1+
\alpha^2\underline{\lambda_{xy}}
}$, $\frac{\underline{\lambda_{yx}}}{1+
\alpha^2\underline{\lambda_{yx}}
}$ are non-negative and appropriately choosing $\alpha$ and $\eta$ allows us to tune $c$ and obtain convergence for functions not satisfying the convex-concave condition.
\cgd restricts the flexibility of $c$ by choosing $\alpha=\eta$ and \cgo utilizes this extra degree of freedom granted by $\alpha$ to allow convergence for a larger class of functions. The proof of the above Theorem is provided in appendix~\eqref{disctimecgo}, for completeness we also we provide the analysis of discrete time \gda in the appendix~\eqref{discretegdasection}.

\paragraph{Convergence to \textit{strictly} $\alpha$-coherent saddle points}
Now we discuss the convergence properties of \cgo for the class of \textit{strictly} $\alpha$-coherent functions, the detailed proof is in the appendix, see \eqref{strictalpha}.

\begin{theorem}\label{strictalphatheorem}
Suppose that a Lipschitz-continuous function $f$ has Lipschitz-continuous gradients
and Hessian and gives rise to a strictly $\alpha$-coherent SP. If \cgo is run with perfect gradient and competitive hessian oracles and parameter $\alpha$ and parameter sequence $\{\eta_n\}$ such that $\sum_{1}^{\infty} \eta_n^{2} < \infty$ and $\sum_{1}^{\infty} \eta_n = \infty$, then the sequence of \cgd iterates $\{z_n\}$, converges to a solution of SP. 
\end{theorem}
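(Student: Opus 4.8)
The plan is to run a quasi-Fej\'er / Robbins--Monro energy argument on the squared distance to a saddle point, in the spirit of the coherence analysis of \citep{mertikopoulos2018optimistic}, but with the monotone operator $g_0$ replaced by the competitive operator $g_\alpha$ and the Stampacchia/Minty inequalities replaced by their $\alpha$-versions. Write the \cgo iteration with perfect oracles as $z_{n+1}=z_n-\eta_n g_\alpha(z_n)$. By the second bullet of Definition~\ref{coherencedef}, strict $\alpha$-coherence guarantees a saddle point $p$ satisfying $\alpha$-MVI globally, i.e. $\langle g_\alpha(z),z-p\rangle\ge 0$ for all $z\in\mathcal{X}\times\mathcal{Y}$, with strict inequality whenever $z$ is not a solution. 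Fix this $p$ and take as energy $E_n=\tfrac12\|z_n-p\|^2$.

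First I would expand the energy into
\[
E_{n+1}=E_n-\eta_n\langle g_\alpha(z_n),z_n-p\rangle+\tfrac{\eta_n^2}{2}\|g_\alpha(z_n)\|^2 .
\]
The middle term is nonpositive by $\alpha$-MVI, so $E_{n+1}\le E_n+\tfrac{\eta_n^2}{2}\|g_\alpha(z_n)\|^2$. To control the last term I would bound the competitive operator: the matrix $M=\begin{bmatrix}I&\alpha\nabla_{xy}^2 f\\-\alpha\nabla_{yx}^2 f&I\end{bmatrix}$ satisfies $M^\top M=\begin{bmatrix}I+\alpha^2\nabla_{xy}^2f\nabla_{yx}^2 f&0\\0&I+\alpha^2\nabla_{yx}^2f\nabla_{xy}^2 f\end{bmatrix}\succeq I$, hence $\|M^{-1}\|\le 1$ and $\|g_\alpha(z)\|\le\|g_0(z)\|=\|\nabla f(z)\|$. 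A bootstrapping argument then keeps the iterates bounded: as long as $z_n$ stays in a ball the Lipschitz gradient makes the increment $O(\eta_n^2)$, and since $\sum_n\eta_n^2<\infty$ the energy cannot escape, yielding a uniform bound $\|g_\alpha(z_n)\|\le G$. With this, $(E_n)$ is quasi-Fej\'er---decreasing up to the summable errors $\tfrac{G^2}{2}\eta_n^2$---so it converges to some $E_\infty\ge 0$.

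Next I would extract a solution from the sequence. Summing the recursion gives
\[
\sum_{n}\eta_n\,\langle g_\alpha(z_n),z_n-p\rangle\;\le\;E_1+\tfrac{G^2}{2}\sum_n\eta_n^2\;<\;\infty ,
\]
so, because $\sum_n\eta_n=\infty$ and every summand is nonnegative, $\liminf_n\langle g_\alpha(z_n),z_n-p\rangle=0$. Using boundedness, I pass to a subsequence $z_{n_k}\to\hat z$ along which this inner product vanishes; by continuity $\langle g_\alpha(\hat z),\hat z-p\rangle=0$, and strict $\alpha$-coherence (the contrapositive of the strict inequality) forces $\hat z$ to be a solution of the SP problem. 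To upgrade subsequential to full convergence I would then switch the anchor from $p$ to $\hat z$: since $\hat z$ is itself a saddle point it satisfies $\alpha$-MVI \emph{locally} by the third bullet of Definition~\ref{coherencedef}, and as $z_{n_k}\to\hat z$ the tail enters this neighborhood, so the identical energy computation with $\hat z$ in place of $p$ shows $\|z_n-\hat z\|^2$ is eventually quasi-Fej\'er and convergent; having a subsequence tending to $\hat z$, its limit must be $0$, i.e. $z_n\to\hat z$.

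I expect the main obstacle to be the two boundedness/locality points. First, closing the bootstrapping loop that simultaneously bounds the iterates and the operator norm $\|g_\alpha(z_n)\|$ without assuming a priori compactness of $\mathcal{X}\times\mathcal{Y}$ requires care, since $\nabla f$ is only Lipschitz, not globally bounded. Second, and more delicate, is making the anchor switch rigorous: one must guarantee that once a subsequence enters the local-MVI neighborhood of $\hat z$, the entire tail remains inside it, so that the local inequality $\langle g_\alpha(z_n),z_n-\hat z\rangle\ge 0$ applies at every step. This couples the convergence of $\|z_n-\hat z\|^2$ with the radius of the neighborhood and is the standard subtle step in coherence-based convergence proofs.
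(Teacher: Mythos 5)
Your proposal is sound and rests on the same descent inequality as the paper --- both start from
\begin{equation*}
D(p,z_{n+1})\;\le\;D(p,z_n)-\eta_n\langle g_{\alpha,n},\,z_n-p\rangle+\tfrac{\eta_n^2}{2}\|g_{\alpha,n}\|^2
\end{equation*}
with $p$ the saddle point furnished by the second bullet of the $\alpha$-coherence definition --- but the two arguments diverge from there. The paper argues by contradiction: it assumes the iterates fail to converge, posits that they lie in a compact set $\mathcal{C}$ disjoint from the solution set, extracts a \emph{uniform} lower bound $\langle g_{\alpha,n},z_n-z^*\rangle\ge a>0$ on that set from strict $\alpha$-coherence, and telescopes to drive the (nonnegative) divergence to $-\infty$. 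You instead run the positive quasi-Fej\'er route: summability of $\eta_n^2$ makes $D(p,z_n)$ convergent, divergence of $\sum\eta_n$ forces $\liminf_n\langle g_{\alpha,n},z_n-p\rangle=0$, a subsequential limit $\hat z$ is then a solution by the strict-inequality contrapositive, and the anchor switch to $\hat z$ upgrades this to convergence of the full sequence. Your route is longer but closes a gap the paper leaves open: non-convergence of $\{z_n\}$ does not by itself imply that the iterates stay uniformly separated from $\mathcal{Z}^*$ (they could accumulate on it along a subsequence), so the paper's uniform bound $a>0$ is not actually justified by its stated hypotheses, whereas your subsequence-plus-anchor-switch argument handles exactly that case. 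Your operator bound $\|g_\alpha\|\le\|g_0\|$ via $M^{\top}M\succeq I$ is correct and is a cleaner way to control the $\eta_n^2$ term than the paper's implicit appeal to boundedness on $\mathcal{C}$. The two caveats you flag yourself --- the boundedness bootstrap without a priori compactness, and ensuring the tail remains in the local-MVI neighborhood of $\hat z$ --- are genuine and are precisely the points the paper also does not resolve (it simply assumes the iterates lie in a compact set), so your proposal is at least as complete as the published proof.
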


\paragraph{Convergence to $\alpha$-coherent saddle points}
For convergence to the saddle points for $\alpha$-coherent functions which are not strictly $\alpha$-coherent, we propose the optimistic \cgo algorithm.

\textbf{Optimistic \cgo}
The update rule is given by:
\begin{align*}
    &z_{n+\frac{1}{2}} =z_n-\eta g_{\alpha,n}\\
    &z_{n+1} = z_n-\eta g_{\alpha,n+\frac{1}{2}}
    \end{align*}
where $g_{\alpha,n},$ is as in \eqref{eqn:localgame0} and $\eta$ is the learning rate.

\begin{theorem}\label{alphatheorem}
Suppose that a $L$-Lipschitz-continuous function $f$ that has $L'$-Lipschitz-continuous gradients and $L_{xy}$ Lipschitz-continuous Hessian gives rise to an $\alpha$-coherent SP. If \ocgo is run with parameter $\alpha$ and parameter sequence $\{\eta_n\}$ such that,
\begin{itemize}
    \item $0<\alpha^2 < \frac{\sqrt{L'^4+4L_{xy}^2L^2}-L'^2}{2L_{xy}^2L^2}$
    \item $0< \eta_n < \frac{\sqrt{\alpha^2 L^2 L_{xy}^2 + L'^2-2\alpha^4L^2 L'^2 L_{xy}^2 - \alpha^2 L'^4 } - \alpha^3 L_{0}^2 L_{xy}^2}{\alpha^2 L^2 L_{xy}^2 + L'^2},\quad\forall n$
\end{itemize}
then the sequence of iterates $z_n$ converges to $z^*$ where $z^*:=(x^*,y^*) \in\mathcal{X\times Y}$ is a saddle point. Moreover, the \ocgo converges with the rate of $\frac{1}{n}$, i.e., for the average of the gradients, we have,
$$\frac{1}{n}\sum_{k=1}^n \|g_{\alpha,k}\|^2=O\left(\frac{1}{n}\right)$$
\end{theorem}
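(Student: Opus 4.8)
The plan is to recognize \ocgo as an extra-gradient (Mirror-Prox type) iteration driven by the vector field $g_{\alpha}$, and to run the classical energy / Fej\'er-monotonicity argument against the $\alpha$-MVI supplied by $\alpha$-coherence. Writing $z=(x,y)$, I would first record the closed form $g_{\alpha}(z)=M(z)^{-1}v(z)$, where
$$M(z)=I+\alpha A(z),\qquad A(z)=\begin{bmatrix}0 & \nabla_{xy}^2 f(z)\\ -\nabla_{yx}^2 f(z) & 0\end{bmatrix},\qquad v(z)=\begin{bmatrix}\nabla_x f(z)\\ -\nabla_y f(z)\end{bmatrix}.$$
The one structural fact I rely on is that $A$ is antisymmetric, because $(\nabla_{xy}^2 f)^\top=\nabla_{yx}^2 f$; consequently $M^\top M=I+\alpha^2 A^\top A\succeq I$, so that $\|M(z)^{-1}\|\le 1$ uniformly in $z$. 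This uniform bound on the resolvent is what keeps every constant below finite.

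The core lemma, and the step I expect to be the main obstacle, is global Lipschitz continuity of the composite field $g_{\alpha}$. Using the resolvent identity $M_1^{-1}-M_2^{-1}=M_1^{-1}(M_2-M_1)M_2^{-1}$ (with $M_i:=M(z_i)$) I would split
$$\|g_{\alpha}(z_1)-g_{\alpha}(z_2)\|\le \|M_1^{-1}\|\,\|v(z_1)-v(z_2)\|+\|M_1^{-1}\|\,\|M_2^{-1}\|\,\|M(z_1)-M(z_2)\|\,\|v(z_2)\|,$$
and then bound $\|v(z_1)-v(z_2)\|\le L'\|z_1-z_2\|$ (Lipschitz gradient), $\|M(z_1)-M(z_2)\|\le \alpha L_{xy}\|z_1-z_2\|$ (Lipschitz Hessian, using that the operator norm of the antisymmetric block perturbation equals $\|\nabla_{xy}^2 f(z_1)-\nabla_{xy}^2 f(z_2)\|$), $\|v(z_2)\|\le L$ ($L$-Lipschitzness of $f$), and $\|M^{-1}\|\le 1$. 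This yields an $\ell$-Lipschitz field with $\ell$ a polynomial in $L,L',L_{xy},\alpha$. The delicate part is tracking this constant sharply enough that the contraction factor below becomes exactly the stated threshold: one checks that the hypothesis on $\alpha$ is equivalent to $\alpha^2\bigl(\alpha^2 L^2 L_{xy}^2+L'^2\bigr)<1$, and that the displayed bound on $\eta_n$ is precisely the requirement $1-\eta^2\ell^2>0$ together with positivity of the quantity under the square root (here $L_0$ should be read as $L$).

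With the Lipschitz lemma in hand, the rate follows from the standard extra-gradient identity. Substituting $z_{n+\frac12}-z_n=-\eta g_{\alpha,n}$ and $z_{n+1}-z_n=-\eta g_{\alpha,n+\frac12}$ into the expansion of $\|z_{n+1}-z^*\|^2$ gives
$$\|z_{n+1}-z^*\|^2=\|z_n-z^*\|^2-2\eta\langle g_{\alpha,n+\frac12},z_{n+\frac12}-z^*\rangle+\eta^2\|g_{\alpha,n+\frac12}-g_{\alpha,n}\|^2-\|z_{n+\frac12}-z_n\|^2,$$
where $z^*$ is the saddle point satisfying $\alpha$-MVI guaranteed by $\alpha$-coherence. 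The inner product is nonnegative by $\alpha$-MVI evaluated at $z_{n+\frac12}$, and the Lipschitz lemma gives $\|g_{\alpha,n+\frac12}-g_{\alpha,n}\|^2\le \ell^2\|z_{n+\frac12}-z_n\|^2$, so
$$\|z_{n+1}-z^*\|^2\le \|z_n-z^*\|^2-(1-\eta^2\ell^2)\|z_{n+\frac12}-z_n\|^2.$$
Telescoping, using $1-\eta^2\ell^2>0$ and $\|z_{k+\frac12}-z_k\|^2=\eta^2\|g_{\alpha,k}\|^2$, bounds $\sum_{k=1}^n\|g_{\alpha,k}\|^2$ by $\|z_1-z^*\|^2/\bigl(\eta^2(1-\eta^2\ell^2)\bigr)$, which is the claimed $O(1/n)$ rate for the averaged squared gradient.

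Finally, the same inequality shows $\{\|z_n-z^*\|\}$ is nonincreasing (Fej\'er monotone) and forces $\|z_{n+\frac12}-z_n\|\to 0$, hence $g_{\alpha,n}\to 0$; since $M$ is invertible with $\|M^{-1}\|\le1$ this gives $\nabla f(z_n)\to 0$. I would then extract a limit point $\hat z$ of the bounded orbit, use the $\alpha$-SVI clause of Definition~\ref{coherencedef} to conclude $\hat z$ is a saddle point, and invoke the local $\alpha$-MVI at $\hat z$ together with Fej\'er monotonicity (now with $z^*=\hat z$) to upgrade subsequential convergence to convergence of the whole sequence. This closing Opial-type step is where all three clauses of $\alpha$-coherence are used.
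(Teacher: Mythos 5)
Your proposal is correct and follows the same skeleton as the paper's proof: the extra-gradient energy identity for the half-step/full-step pair, the $\alpha$-MVI (guaranteed by the second clause of Definition~\ref{coherencedef}) to discard the inner-product term, a Lipschitz bound on $\|g_{\alpha,n+\frac12}-g_{\alpha,n}\|$ in terms of $\|z_{n+\frac12}-z_n\|$, telescoping for the $O(1/n)$ rate, and the closing Opial-type step that uses the $\alpha$-SVI clause to identify the limit point as a saddle point and the local $\alpha$-MVI clause to upgrade to convergence of the whole sequence. Where you genuinely depart from the paper is the key Lipschitz lemma. The paper works with the implicit form of the update, $\Delta x=-\eta\nabla_x f-\alpha\nabla^2_{xy}f\,\Delta y$, and isolates the increments by dividing through, which produces a bound of the shape $\bigl(L'^2+L_{xy}^2\alpha^2(\alpha+\eta_n)^2L^2\bigr)/\bigl(1-\alpha^2\|\nabla^2_{xy}f\|^2\bigr)$ whose denominator is exactly what forces the stated condition $\alpha^4L_{xy}^2L^2+\alpha^2L'^2<1$ and the quadratic-in-$\eta_n$ threshold. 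You instead use the explicit resolvent form $g_\alpha=M^{-1}v$ with $M=I+\alpha A$, $A$ antisymmetric, so $M^\top M\succeq I$ and $\|M^{-1}\|\le 1$, and the resolvent identity then gives the cleaner global constant $\ell=L'+\alpha LL_{xy}$ with no denominator at all. This buys a simpler and arguably tighter lemma, but it means your contraction condition $1-\eta^2\ell^2>0$ is \emph{not} the same inequality as the one in the theorem statement; your reduction of the $\alpha$-hypothesis to $\alpha^2(\alpha^2L^2L_{xy}^2+L'^2)<1$ is right and matches the paper's intermediate inequality, but one cannot check that the stated $\eta_n$-threshold implies $\eta_n(L'+\alpha LL_{xy})<1$ in general (the ratio $(a+b)/\sqrt{a^2+b^2}$ can exceed $1$), so to recover the theorem \emph{as stated} you would either have to redo the lemma with the paper's implicit-form bookkeeping or restate the step-size condition to match your constant. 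This is a presentational mismatch rather than a logical gap, and given that the paper's own constants are not tracked cleanly (e.g.\ $L_0$ versus $L$, and $L_{xy}$ used both as a Hessian Lipschitz constant and as a norm bound on $\nabla^2_{xy}f$), your route is a legitimate and cleaner alternative.
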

The details proof of the above Theorem is provided in \eqref{alphatheorem}.

\subsection{Simulation of \cgo and \ocgo on families from section \eqref{Section4}}

\begin{figure}[t]
\centering
\subcaptionbox{\cgo \label{bilineara}}
[.24 \textwidth]{\includegraphics[width=\linewidth]{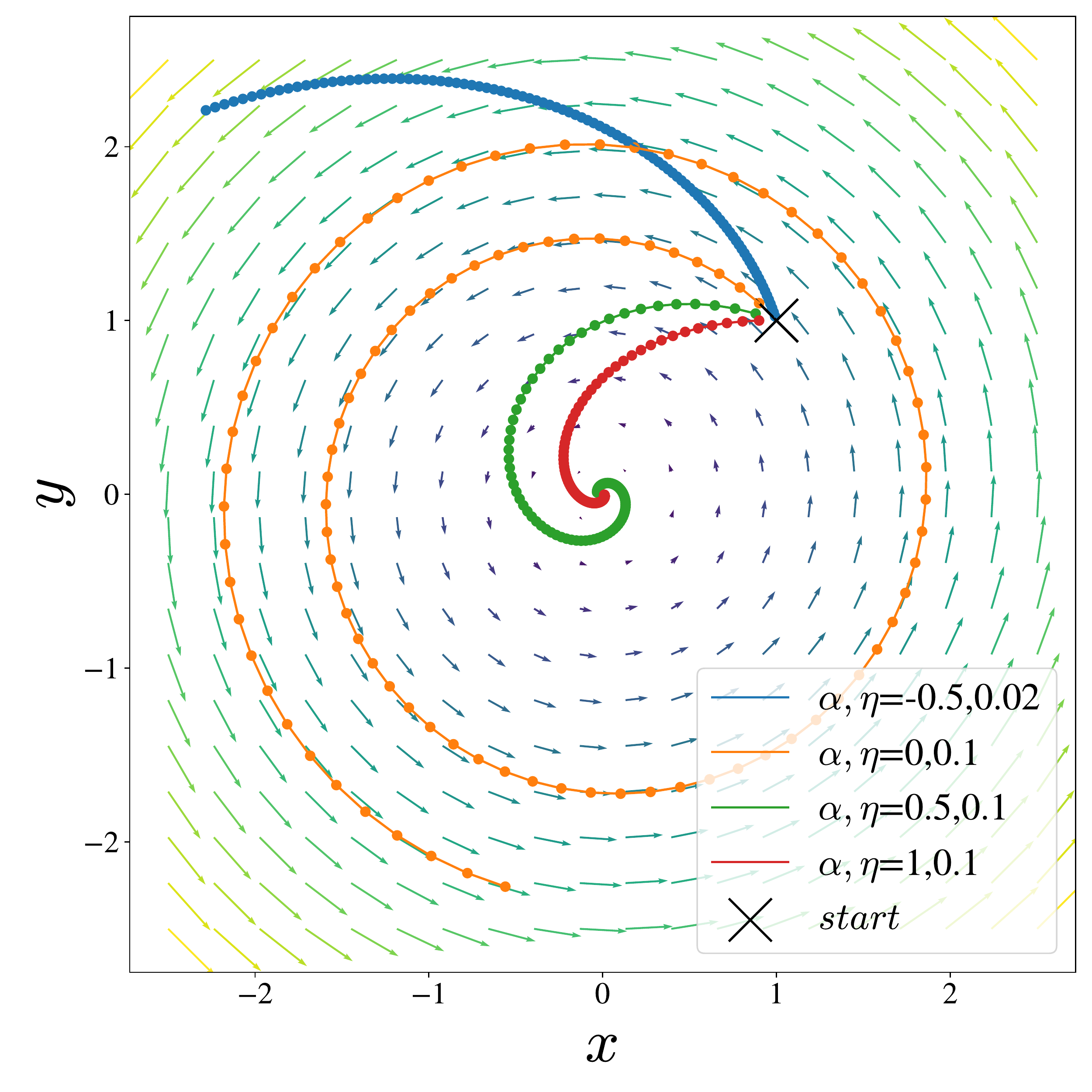}}
\subcaptionbox{optimistic \cgo\label{bilinearb}}
[.24 \textwidth]{\includegraphics[width=\linewidth]{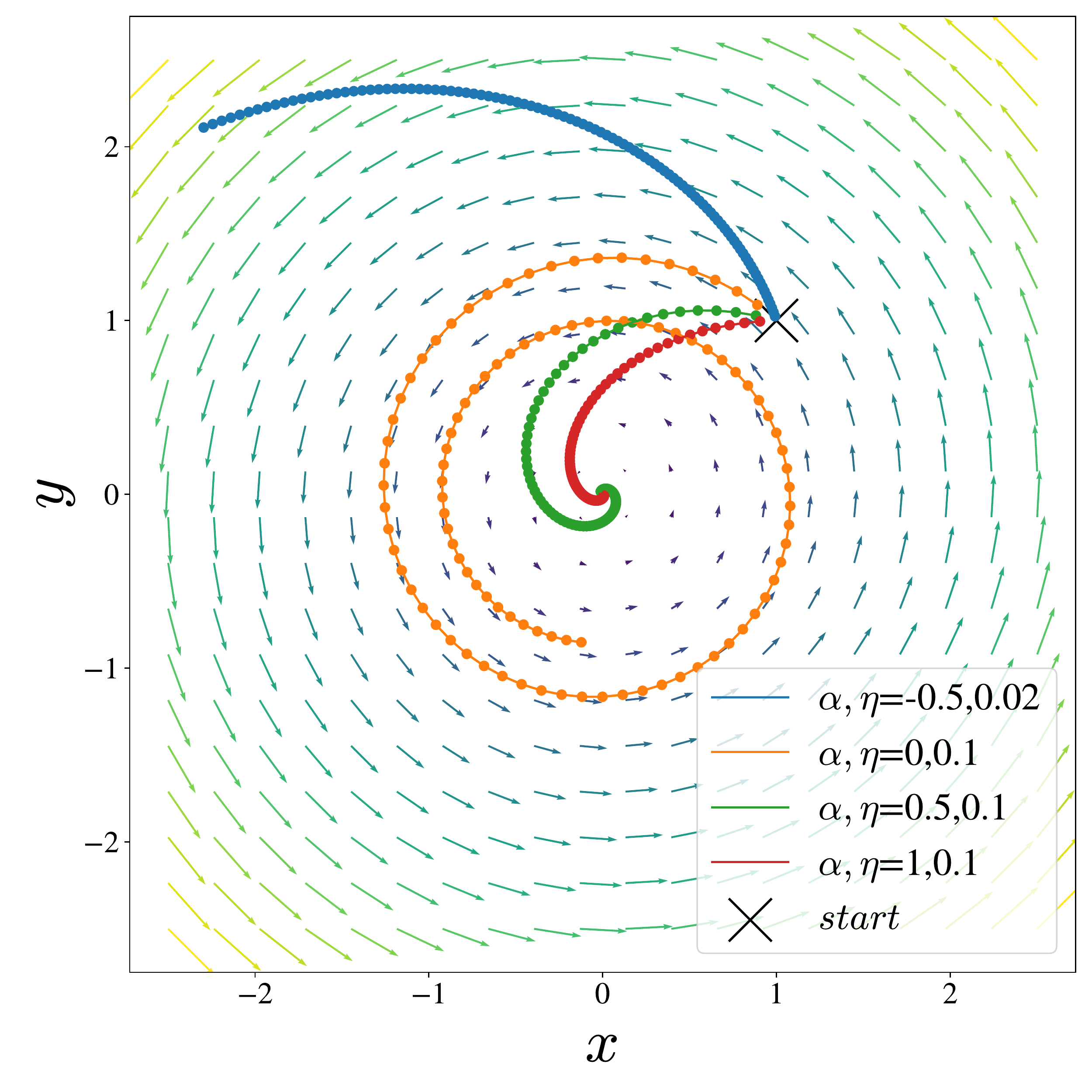}}
\subcaptionbox{\cgo\label{bilinearc}}
[.245 \textwidth]{\includegraphics[width=1.1\linewidth]{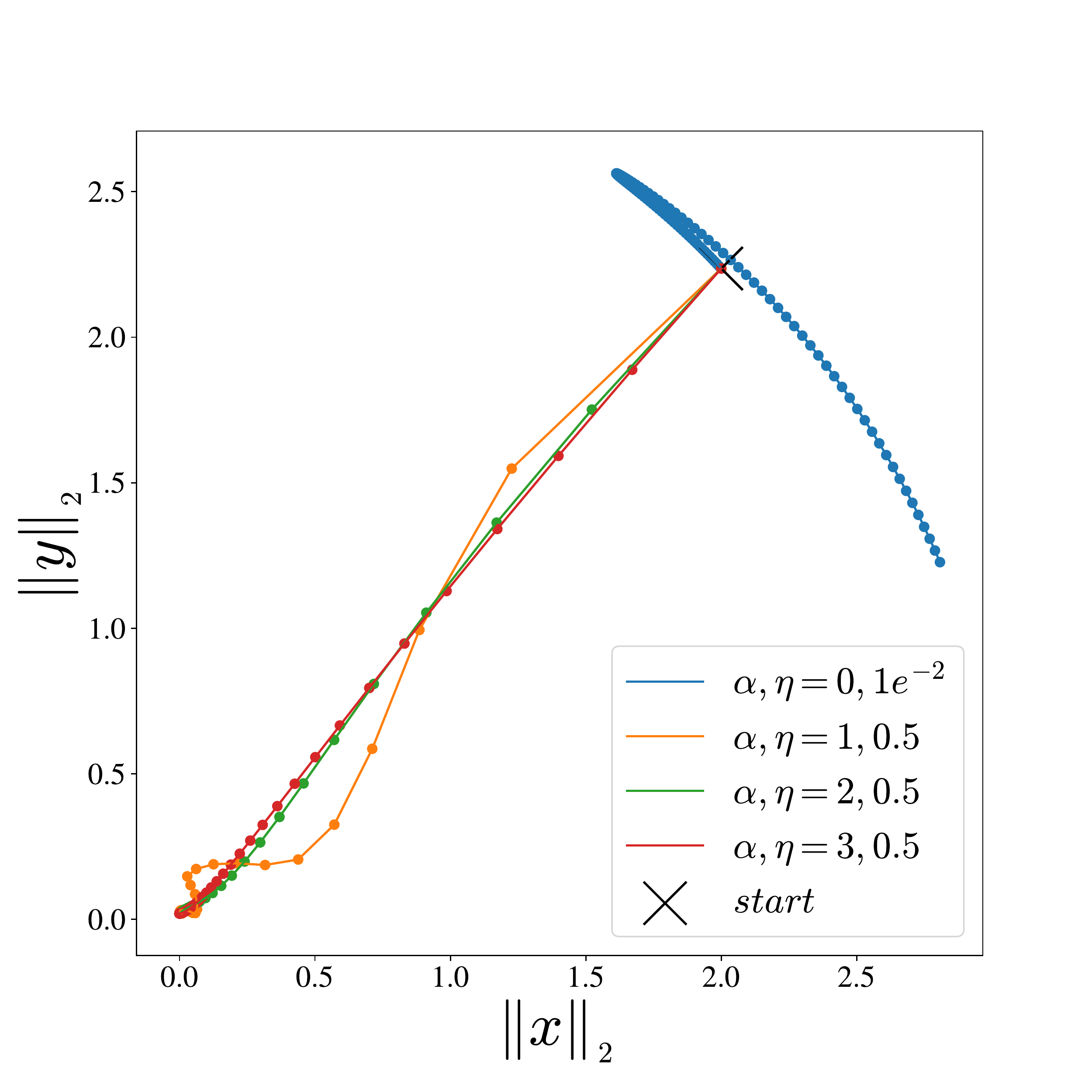}}
\subcaptionbox{optimistic \cgo\label{bilineard}}
[.245 \textwidth]{\includegraphics[width=1.1\linewidth]{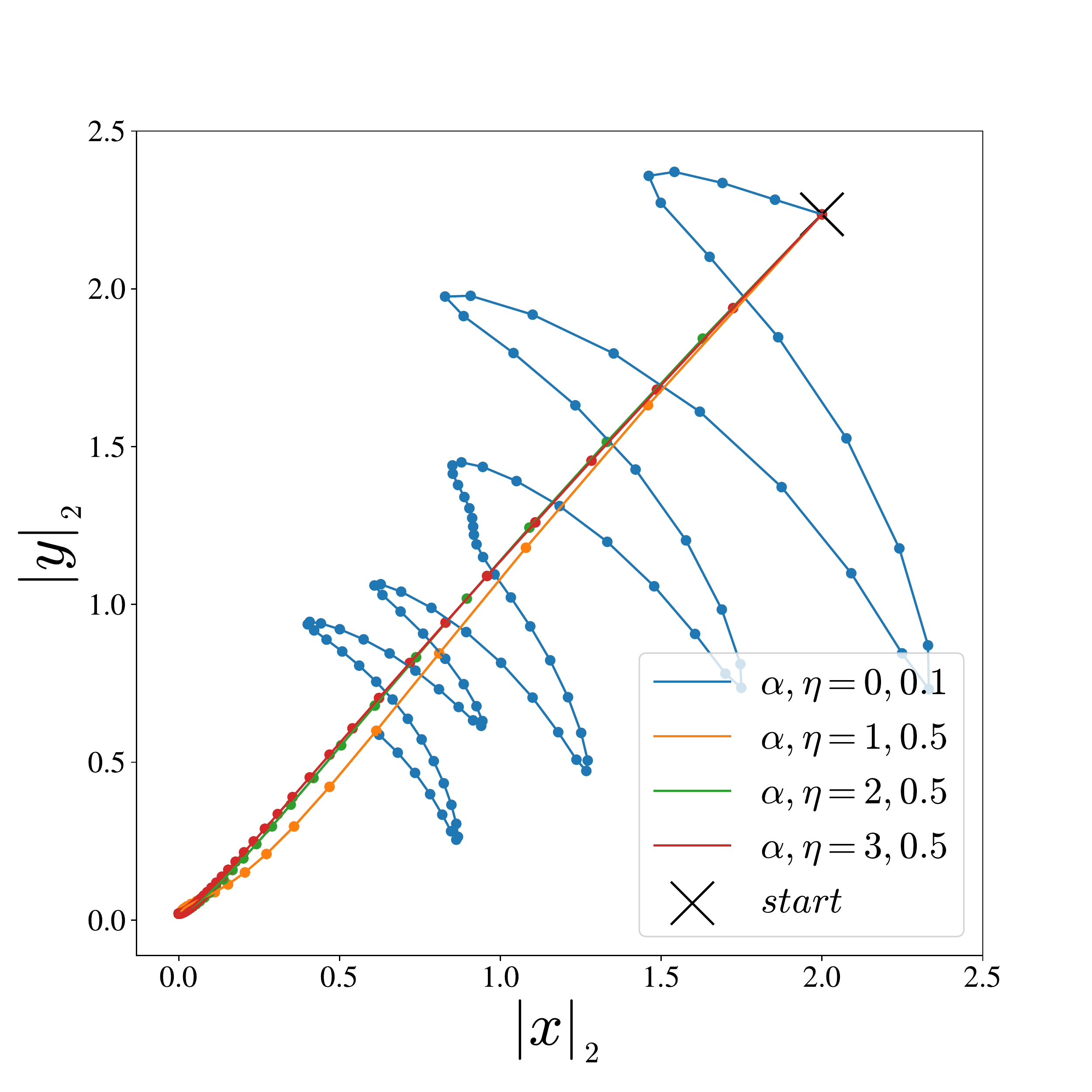}}
\caption{\cgo and optimistic \cgo on bilinear functions $f(x,y)=xy, (x,y)\in \Real^2$ : (a,b) and $f(x,y) = x^\top A y, x\in \Real^4,y\in \Real^5$ : (c,d) for 100 iterations.}
\end{figure}
We now evaluate the performance of \cgo and \ocgo on families discussed in examples \eqref{eg1} and \eqref{eg2}. We first consider a function $f(x,y)=x^{\top}Ay,A\in \Real^{4\times 5}, x\in \Real^4, y\in \Real^5$ . We sample all the entries of A independently from a standard Gaussian, $A=(a_{ij}),a_{ij}\sim \N (0,1)$. We consider the plot of the $L^2$ norm of $x$ vs. that of $y$, since the only saddle point is the origin, the desired solution is $\|x\|_2,\|y\|_2\rightarrow 0$. We plot the iterates of \cgo and \ocgo for different $\alpha$, and observe that \ocgo converges to the saddle point for $\alpha\geq 0$ (at a very slow rate for $\alpha=0$) while \cgo does so for $\alpha >0$. The results at $\alpha=0$ are that of \gda and optimistic \gda. We see similar results for the case where $A$ is the scalar $1$, i.e. $f(x,y)=xy$. This is in accordance with the analysis in example \eqref{eg1}.

We then proceed to perform experiments on the family $f(x,y) = \frac{k}{2}(x^2-y^2)+xy$ for $k=2,-2$. For both values of $k$ we see that \ocgo converges to the origin for $\alpha\geq -k$ and \cgo converges for $\alpha > -k$, following the analysis in example \eqref{eg2}. For $k=2$ the origin is a $\min-\max$ saddle point, while for $k=-2$ it is a $\max-\min$ saddle point. The gradient field, $g_0:=(\nabla_x(f(x,y),-\nabla_y f(x,y))$ is plotted for all 2-dimension cases. All algorithms are run for 100 iterations.

\begin{figure}[t]
\centering
\subcaptionbox{\cgo\label{coherentfama}}
[.24 \textwidth]{\includegraphics[width=\linewidth]{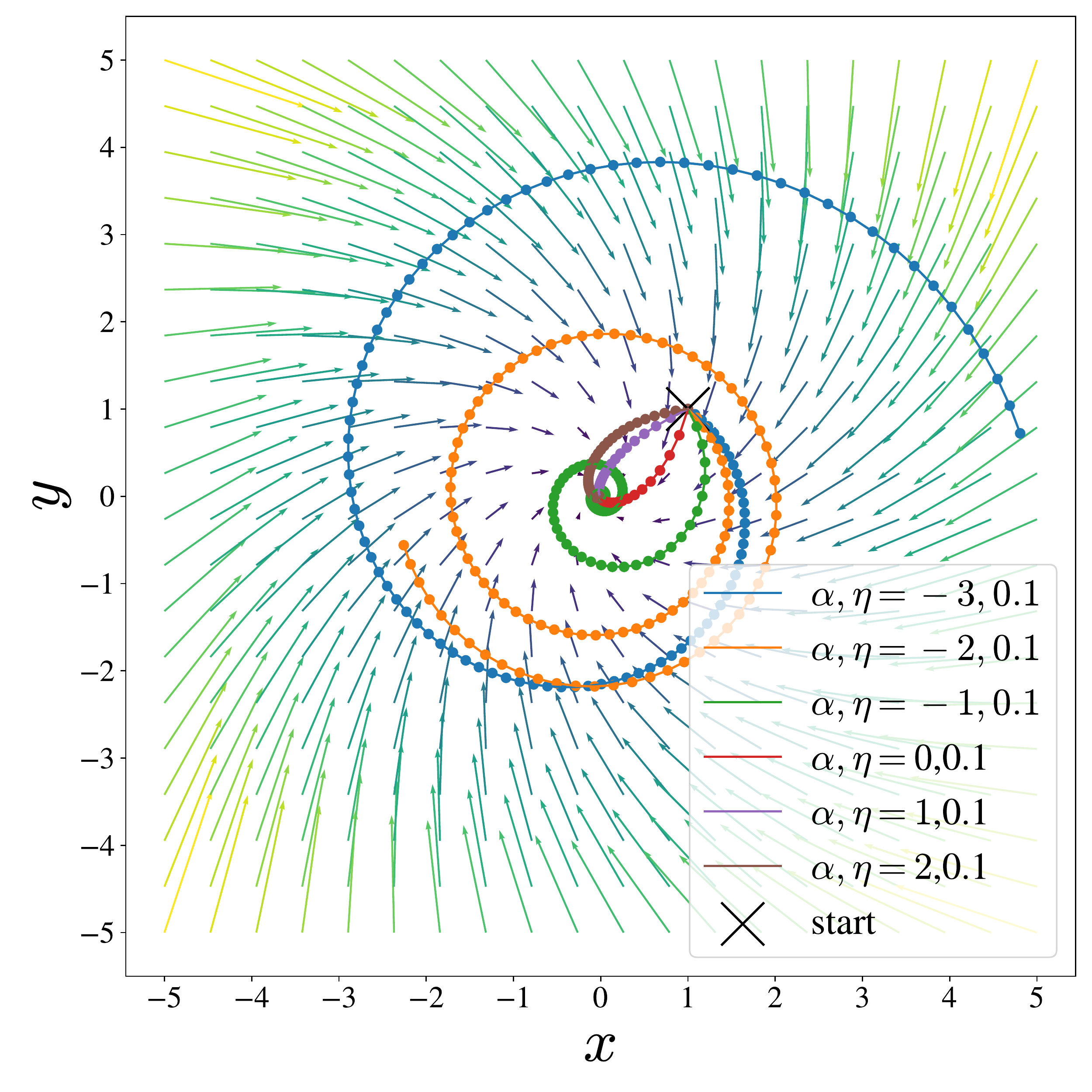}}
\subcaptionbox{optimistic \cgo\label{coherentfamb}}
[.24 \textwidth]{\includegraphics[width=\linewidth]{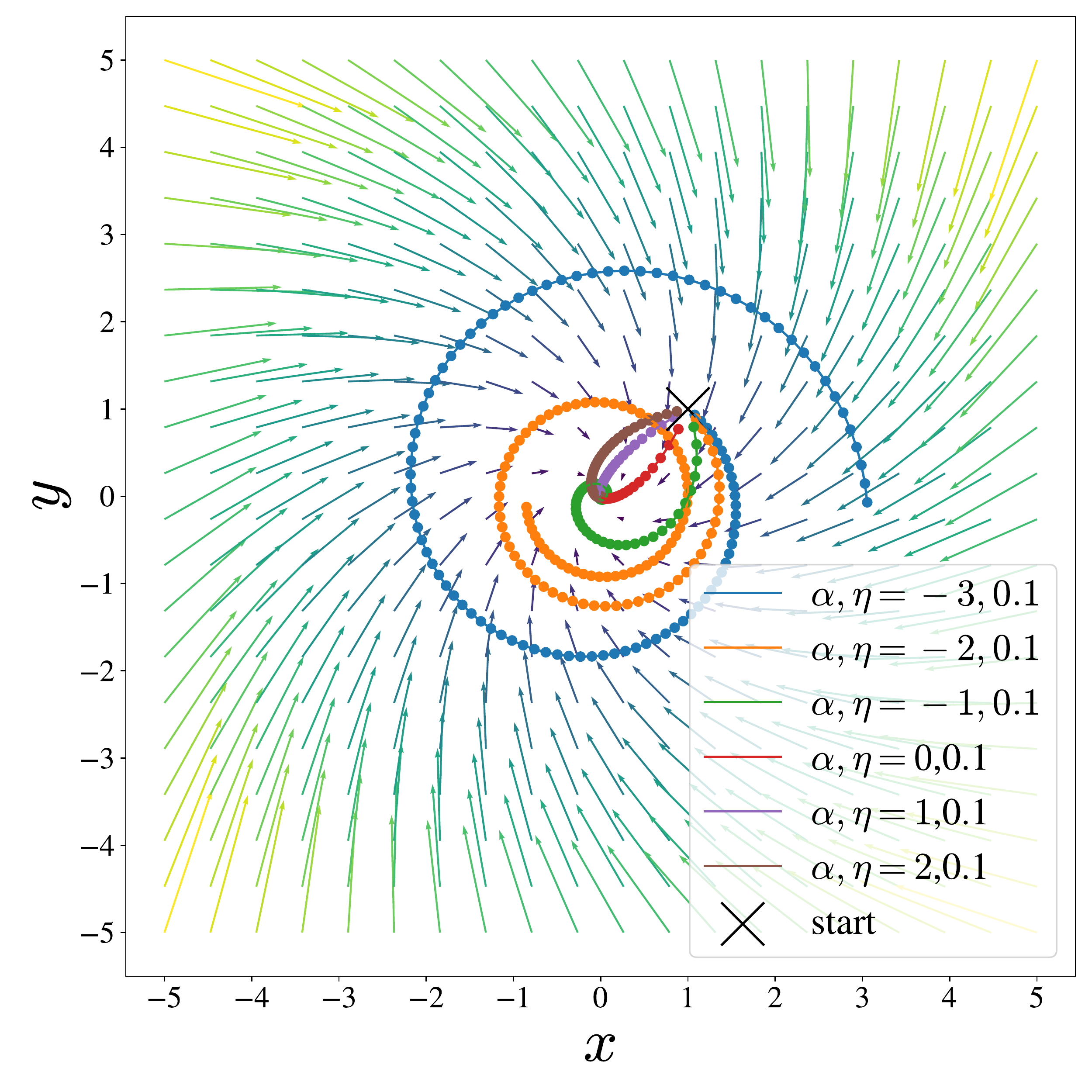}}
\subcaptionbox{\cgo\label{coherentfamc}}
[.24 \textwidth]{\includegraphics[width=\linewidth]{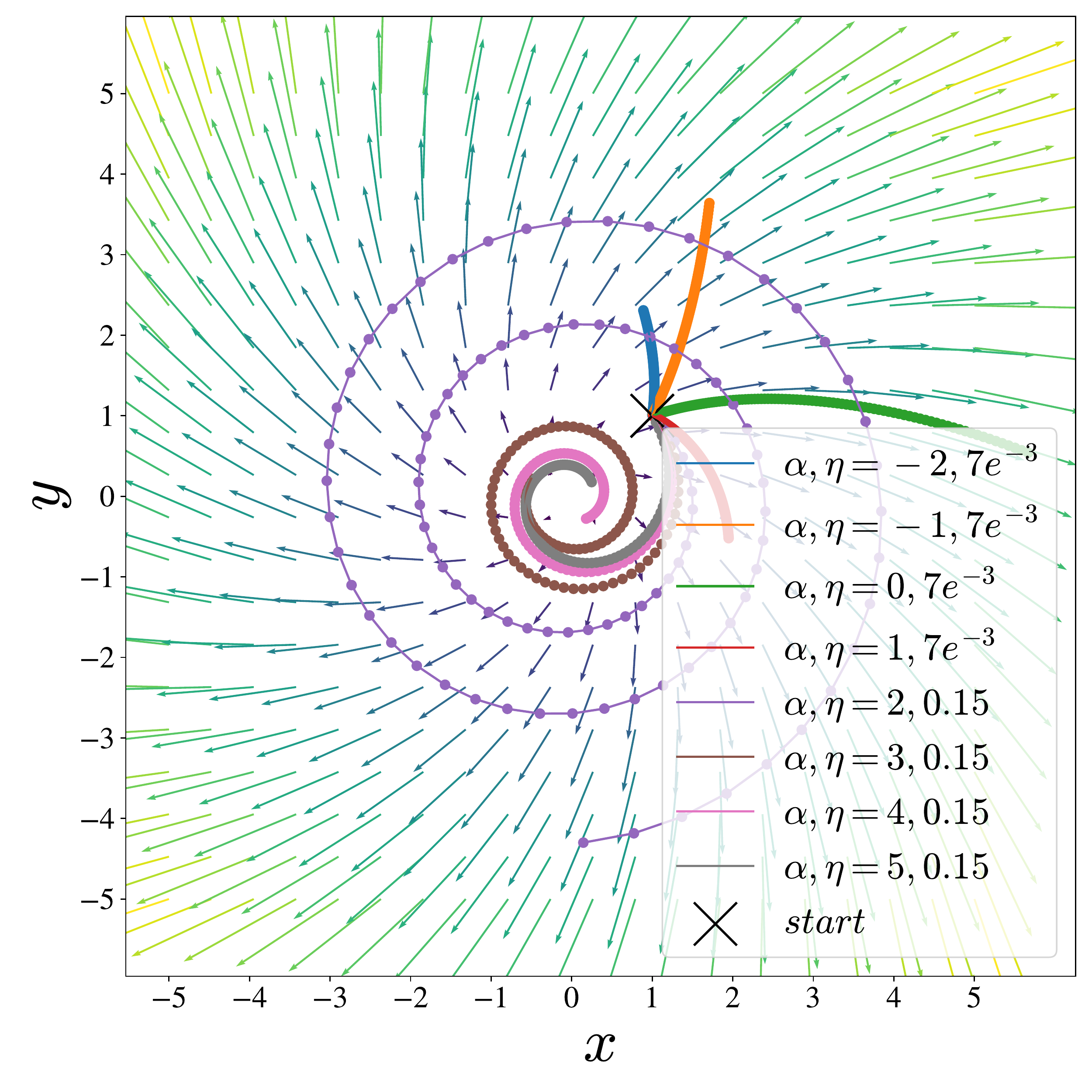}}
\subcaptionbox{optimistic \cgo\label{coherentfamd}}
[.24 \textwidth]{\includegraphics[width=\linewidth]{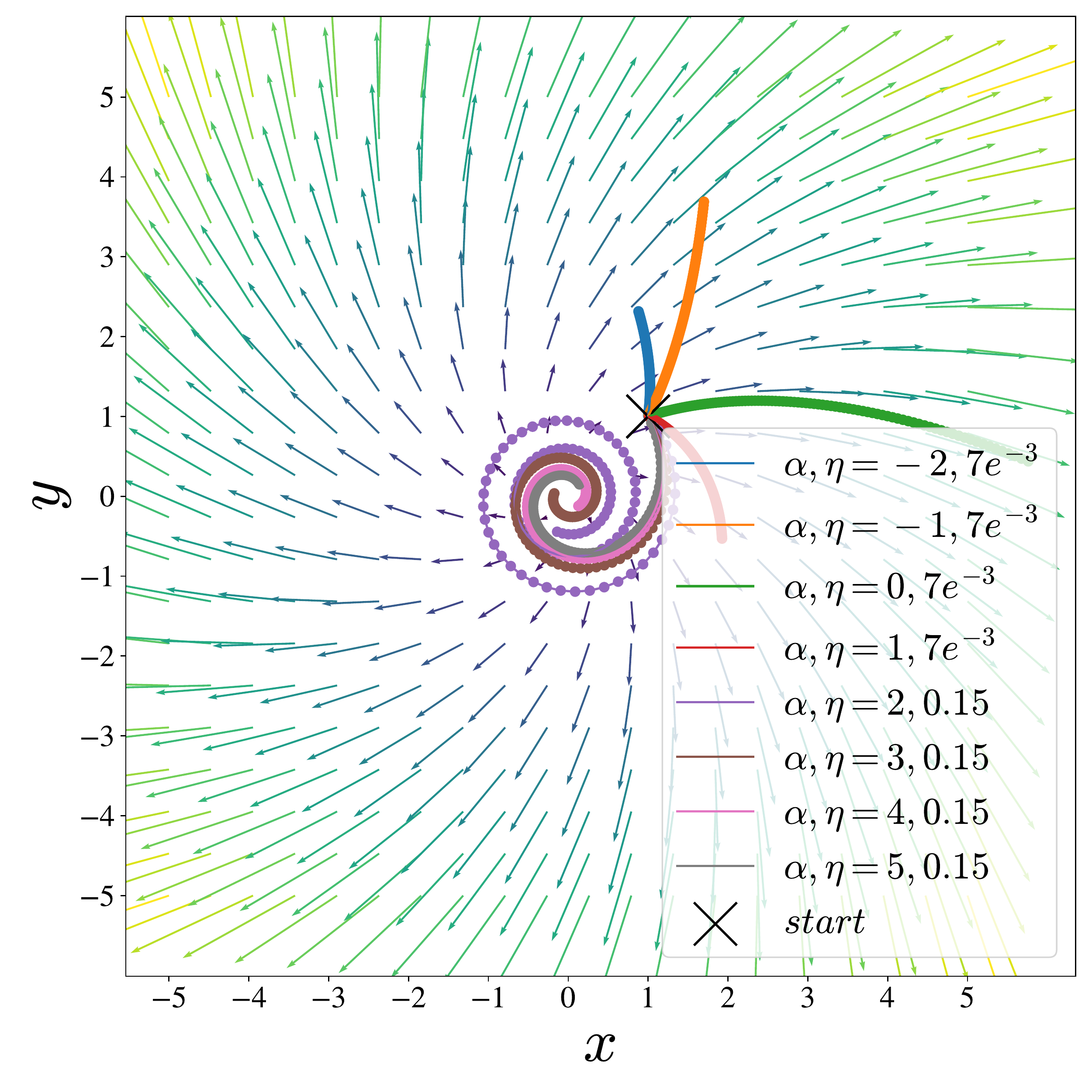}}
\caption{\cgo and optimistic \cgo on functions from the family $f(x,y)=\frac{k}{2}(x^2-y^2)-xy$. k=2 : (a,b) and k=-2 : (c,d) for 100 iterations.}
\end{figure}

\section{Conclusion}
We propose the \cgo algorithm which allows us to control the effect of the cross derivative term in \cgd. This increases the size of the class of functions for which the algorithm converges. In the realm of continuous-time we observe that \cgd reduces to \gda, \cgo on the other hand gives rise to a distinct update which allows for a margin of deviation from the strictly convex-concave convergence condition of \gda. Furthermore, we generalize the definition of coherent saddle point problems defined in \cite{mertikopoulos2018optimistic} to $\alpha$-coherent saddle points for which we prove convergence of Optimistic \cgo and of \cgo in the strict version of $\alpha$-coherence, we show order $O(\frac{1}{n})$ rate of the average gradients for \cgo. Finally we present a short experiment study on some $\alpha$-coherent functions. Future work would involve using \cgo in various machine learning tasks such as GANs, competitive reinforcement learning (RL) and adversarial machine learning.

\newpage

\bibliography{Arxiv}
\bibliographystyle{plainnat}

\newpage

\newpage
\addcontentsline{toc}{section}{Appendix}
\part{Appendices} 
\parttoc 
\newpage

In this section we present proofs for statements pertaining to example \eqref{eg1} and \eqref{eg2}.

\section{Proof of example~\eqref{eg1}}\label{eg1a}
For clarity we restate the statement of the example~\eqref{eg1}. All functions of the form $\Delta x^{\top}Ay$ are strictly $\alpha$-coherent $\forall \alpha>0$ and are null coherent for $\alpha=0$.

\begin{proof}[Proof of example~\eqref{eg1}]

In order to show the above mentioned statement, we first note that the origin is the only saddle point of this function. We now evaluate $\langle g_0,z \rangle$, where $g_0 = (Ay,-A^\top x)$. Hence, $\forall ~(x,y) \in \mathcal{X}\times \mathcal{Y}$. we have,
$$\langle g_0,z \rangle = x^\top Ay-y^\top A^\top x = 0.$$
ergo, the function $\Delta x^{\top}Ay$ is null-coherent.

Similarly, we evaluate the $\alpha$-SVI. We observe for the function $x^\top A y$,
$$g_{\alpha} = ((I+\alpha^2AA^\top)^{-1}(Ay+\alpha A A^\top x),(I+\alpha^2A^\top A)^{-1}(-A^\top x+\alpha A^\top A y))$$
Hence for $\langle g_{\alpha},z \rangle $ we have,
\begin{align} \label{eq:eg1_alpha}
    \langle g_{\alpha},z \rangle = x^\top(I+\alpha^2AA^\top)^{-1}(Ay+\alpha A A^\top x)+y^\top (I+\alpha^2A^\top A)^{-1}(-A^\top x+\alpha A^\top Ay)
\end{align}
We further observe that, following the statement of Lemma~\eqref{lemma:inverse-transpose}, we have
$$(I+\alpha^2AA^\top)^{-1}A = A(I+\alpha^2A^\top A)^{-1},$$
and therefore, incorporating it in to the Eq.~\eqref{eq:eg1_alpha}, we have, 
$$ x^\top(I+\alpha^2AA^\top)^{-1}Ay = x^\top A(I+\alpha^2A^\top A)^{-1}y = y^\top (I+\alpha^2A^\top A)^{-1}A^\top x.$$
Thus, for $\langle g_{\alpha},z \rangle $ we have,
\begin{align*}
    \langle g_{\alpha},z \rangle = &x^\top(I+\alpha^2AA^\top)^{-1}\alpha A A^\top x+y^\top (I+\alpha^2A^\top A)^{-1}\alpha A^\top Ay\\
     \geq &\alpha \lambda_{min}((I+\alpha^2AA^\top)^{-1} A A^\top) \|\Delta x\|^2+\alpha\lambda_{min}((I+\alpha^2A^\top A)^{-1} A^\top Ay)\|\Delta y\|^2
\end{align*}
Finally, observing that $\min(\lambda_{min}(A^\top A),\lambda_{min}(AA^\top))\geq0$ for any $A$, and following the statement in the Lemma~\eqref{lemma:eigen-properties1} we also have,  
$$\alpha \lambda_{min}((I+\alpha^2AA^\top)^{-1} A A^\top) \|\Delta x\|^2+\alpha\lambda_{min}((I+\alpha^2A^\top A)^{-1} A^\top Ay)\|\Delta y\|^2>0,\quad\forall \alpha>0,$$
and hence $\langle g_{\alpha},z \rangle >0,\quad\forall \alpha>0$. Ergo, the function $\Delta x^{\top}Ay$ is \textit{strictly} $\alpha$ coherent.
\end{proof}

\section{Proof of example~\eqref{eg2}}\label{eg2a}
Now, we restate the statement of the example \eqref{eg2}. The family ofunctions $f_{k}(x,y) = \frac{k}{2}(x^2-y^2)+xy$ for $k\geq 0$ gives rise to
\begin{itemize}
    \item  $\min-\max$ $\alpha$-coherent SP problem when $\alpha = -k$,
    \item $\min-\max$ \textit{strictly} $\alpha$-coherent SP problem when $ \alpha >- k$.
\end{itemize}

and for $k<0$ the family gives rise to,
\begin{itemize}
    \item $\max-\min$ $\alpha$-coherent SP problem when $\alpha = -k$,
    \item $\max-\min$ \textit{strictly} $\alpha$-coherent SP problem when $ \alpha >- k$.
\end{itemize}

\begin{proof}[Proof of example~\eqref{eg2}]

We first note that the origin is the only saddle point of the above family. Further, the origin is a $\min-\max$ saddle point when $k\geq 0$ and a $\max-\min$ saddle point when $k<0$.

For this family we evaluate $\langle g_{\alpha},z \rangle$,
\begin{align*}
        \langle g_{\alpha},z \rangle = &x((1+\alpha^2)^{-1}(k x-y-\alpha (-x-k y)))+y((1+\alpha^2)^{-1}(x+k y-\alpha(k x-y)))\\
        = & (1+\alpha^2)^{-1}( k x^2-xy+\alpha x^2 +\alpha k xy+xy+k y^2 -\alpha kxy+\alpha y^2) \end{align*}
\end{proof}

Simplifying this expression for $\alpha>-k $ we obtain,
\begin{align*}
        \langle g_{\alpha},z \rangle = \frac{k+\alpha}{1+\alpha^2}(x^2+y^2)>0,~\forall \alpha>-k 
\end{align*}

Ergo, the above mentioned function class is \textit{strictly} $\alpha$-coherent when $\alpha>-k$. Furthermore, when $\alpha=-k$
we have $\langle g_{\alpha},z \rangle=0$, ergo the class is null $\alpha$-coherent for $\alpha=-k$.


\section{Continuous time \gda}\label{contgda}
In this section, we state the update rule for \gda and derive sufficient convergence conditions using Lyapunov analysis. The update rule of \gda is computed through the following optimization problem,

\begin{align}
    \begin{split}
    \label{eqn:localgame}
    \min_{\delta x \in \Reals^m} \delta x^{\top} \nabla_x f  &+\delta y^{\top} \nabla_y f + \frac{1}{2\eta} \delta x^{\top}\delta x \\
    \max_{\delta y \in \Reals^n}\delta y^{\top} \nabla_y f &+ \delta x^{\top} \nabla_x f  - \frac{1}{2\eta}\delta y^{\top}\delta y.
    \end{split}
\end{align}

Which gives the following closed form update,

\begin{align} \label{cont1}
\begin{bmatrix}
 \Delta x\\
 \Delta y
\end{bmatrix} &= -\eta \begin{bmatrix}
\nabla_x f \\
-\nabla_y f 
\end{bmatrix} 
\end{align}
where $\eta$ is the learning rate. 
Taking the limit $\eta \rightarrow 0$ and scaling the flow of time with $\beta$ we get the continuous time dynamics as follows,

\begin{align} \label{cont2}
\begin{bmatrix}
\dot x\\
\dot y
\end{bmatrix} &= -\beta \begin{bmatrix} 
\nabla_x f \\
-\nabla_y f 
\end{bmatrix}  = -\beta  g_0
\end{align}

where $g_0=\begin{bmatrix}
\nabla_x f\\
-\nabla_y f 
\end{bmatrix}$ is the concatenation of the gradients. 
Furthermore, for the second order curvature of this dynamics, i.e., the gradient of $g_0$, we have,
\begin{align}\label{g0dot}
\dot g_0=\begin{bmatrix}
\nabla_{xx}^2 f &  \nabla_{xy}^2 f\\
-\nabla_{yx}^2 f & -\nabla_{yy}^2 f
\end{bmatrix}
\begin{bmatrix}
\dot x \\
\dot y
\end{bmatrix}
\end{align}
For the Lyapunov analysis, we now choose $\|g_0\|^2$ as our Lyapunov function and evaluate its time-derivative, i.e., 
\begin{align*}
\dot{\| g_0\|^2}=\frac{\mathrm{d}\|g_0\|^2}{\mathrm{d}t}= 2 g_0^{\top} \dot g_0 &=2\begin{bmatrix}
\nabla_x f^\top& -\nabla_y f ^\top
\end{bmatrix}\begin{bmatrix}
\nabla_{xx}^2 f &  \nabla_{xy}^2\\
-\nabla_{yx}^2 f & -\nabla_{yy}^2 f
\end{bmatrix}
\begin{bmatrix}
\dot x \\
\dot y
\end{bmatrix}
     \\
     &=2 \dot x^{\top} \nabla_{xx}^2 f \nabla_x f +  2 \nabla_x f^{\top} \nabla_{xy}^2 f \dot y 
     + 2 \dot y^{\top} \nabla_{yy}^2 f \nabla_y f +  2 \nabla_y f^{\top} \nabla_{yx}^2 f \dot x
\end{align*}
     
Using the update rule of \gda, i.e., Eq.~\eqref{cont2}, we substitute $\dot x$ and $\dot y$ in the above equation and have, 
\begin{align}\label{gdot} 
 \dot{ \| g_0\|^2 }&= -2\beta \nabla_x f^{\top} \nabla_{xx}^2 f \nabla_x f +  2\beta\nabla_y f^{\top}  \nabla_{yy}^2 f \nabla_y f\nonumber\\
 &\qquad\qquad\qquad-2\beta \nabla_x f^{\top} \nabla_{xy}^2 f \nabla_y f +  2\beta\nabla_y f^{\top}  \nabla_{yx}^2 f \nabla_x f \nonumber\\
& = -   2\beta \nabla_x f^{\top} \nabla_{xx}^2 f \nabla_x f -  (-2\beta\nabla_y f^{\top}  \nabla_{yy}^2 f \nabla_y f) 
\end{align}

For the right hand side, we know,
\begin{align*}
    2\beta \nabla_x f^{\top} \nabla_{xx}^2 f \nabla_x f +  (-2\beta\nabla_y f^{\top}  \nabla_{yy}^2 f \nabla_y f) \geq       \lambda_{min}(2\beta\nabla_{xx}^2 f)\|\nabla_x f\|^2+\lambda_{min}(-2\beta\nabla_{yy}^2 f)\|\nabla_y f\|^2  \nonumber
\end{align*}

Therefore, following the Eq.~\eqref{gdot}, we have,
\begin{align*}
       -\dot{ \| g_0\|^2 } \geq \lambda_{min}(2\beta\nabla_{xx}^2 f)\|\nabla_x f\|^2+\lambda_{min}(-2\beta\nabla_{yy}^2 f)\|\nabla_y f\|^2  
\end{align*}

Resulting in the following Lyapunov key inequality,

\begin{align*} 
\dot{ \| g_0 \|^2 } &\le -\|g_0\|^2 \min \{\lambda_{\min}( 2\beta \nabla_{xx}^2 f), \lambda_{\min}(-2\beta \nabla_{yy}^2 f)\}
\end{align*}

Since, for convex-concave functions, $\min \{\lambda_{\min}( 2\beta \nabla_{xx}^2 f), \lambda_{\min}(-2\beta \nabla_{yy}^2 f)\}$ is always non-negative, which guarantees convergence of this dynamical system.

\section{Continuous time \cgo}\label{conttimecgo}
In this section, we first derive the continuous-time update rule of \cgo and then show convergence by choosing the norm squared of the gradient of $f$ as the Lyapunov function. Taking the \cgo update rule,
\begin{align*} 
\begin{bmatrix}
 \Delta x\\
 \Delta y
\end{bmatrix}
&= -\eta \begin{bmatrix}
I & \alpha \nabla_{xy}^2 f\\
-\alpha \nabla_{yx}f & I
\end{bmatrix}^{-1}\begin{bmatrix}
\nabla_x f \\
-\nabla_y f 
\end{bmatrix} 
\end{align*}

and taking the limit $\eta \rightarrow 0$, treating $\eta$ as time, and scaling time with $\beta$, we get,
\begin{align} \label{eq:update-rule}
\begin{bmatrix}
\dot x\\
\dot y
\end{bmatrix}  &= -\beta \begin{bmatrix}
I & \alpha \nabla_{xy}^2 f\\
-\alpha \nabla_{yx}^2 f & I
\end{bmatrix}^{-1}\begin{bmatrix}
\nabla_x f \\
-\nabla_y f 
\end{bmatrix} 
\end{align}

We further simplify Eq.~\eqref{eq:update-rule} by re-arranging the matrix inverse,
\begin{align}\label{contform1} 
\begin{bmatrix}
\dot x +\alpha \nabla_{xy}^2 f  \dot y \\
-\alpha \nabla_{yx}^2 f  \dot x + \dot y
\end{bmatrix}
=
\begin{bmatrix}
-\beta \nabla_x f \\
\beta \nabla_y f  
\end{bmatrix}
\end{align}
The above form will be useful in showing convergence. By solving for variable $\dot x, \dot y$, we get the explicit form,
\begin{align}\label{contform2}
\dot x &= -\beta \left( I + \alpha^2 \nabla_{xy}^2 f \nabla_{yx}^2 f \right)^{-1}  
            \left( \nabla_{x} + \alpha \nabla_{xy}^2 f  \nabla_{y} \right)\nonumber\\
\dot y &= -\beta \left( I + \alpha^2 \nabla_{yx}^2 f \nabla_{xy}^2 f \right)^{-1}  
            \left( \alpha \nabla_{yx}^2 f  \nabla_{x} - \nabla_{y} \right)
\end{align}
We use this construction to prove Theorem \eqref{contheorem}.

\begin{proof}[Proof of Theorem \eqref{contheorem}]
    We choose $\|g_0\|^2$ as our Lyapunov function and evaluate its time derivative to observe,
\begin{align} \label{dhh1}
\dot{\| g_0\|^2}&=\frac{\mathrm{d}\|g_0\|^2}{\mathrm{d}t}\nonumber\\
&= 2 g_0^{\top} \dot g_0 \nonumber\\
&=2\begin{bmatrix}
\nabla_x f^\top& -\nabla_y f ^\top
\end{bmatrix}\begin{bmatrix}
\nabla_{xx}^2 f &  \nabla_{xy}^2 f\\
-\nabla_{yx}^2 f & -\nabla_{yy}^2 f
\end{bmatrix}
\begin{bmatrix}
\dot x \\
\dot y
\end{bmatrix}
 \nonumber\\
 &=2 \dot x^{\top} \nabla_{xx}^2 f \nabla_x f +  2 \nabla_x f^{\top} \nabla_{xy}^2 f \dot y  + 2 \dot y^{\top} \nabla_{yy}^2 f \nabla_y f +  2 \nabla_y f^{\top} \nabla_{yx}^2 f \dot x
\end{align}

Ignoring the factor $2$, we expand the terms containing $\nabla_{xy}^2 f$ in Eq.~\eqref{dhh1} by replacing $\dot x$ and $\dot y$ using Eq.~\eqref{contform2} as follows,
\begin{align} \label{crosstermexpansion}
\dot x^{\top} \nabla_{xy}^2 f \nabla_y f + \nabla_x f^{\top} \nabla_{xy}^2 f\dot y &= -\beta \left( \nabla_{x} + \alpha \nabla_{xy}^2 f  \nabla_{y} \right)^{\top}\left( I + \alpha^2 \nabla_{xy}^2 f \nabla_{yx}^2 f \right)^{-1}  
 \nabla_{xy}^2 f \nabla_y f \nonumber\\
&\quad\quad-\nabla_x f^{\top} \nabla_{xy}^2 f\beta \left( I + \alpha^2 \nabla_{yx}^2 f \nabla_{xy}^2 f \right)^{-1}  
\left( \alpha \nabla_{yx}^2 f  \nabla_{x} - \nabla_{y} \right) \nonumber\\
&= -\beta \nabla_x f^{\top} \left( I + \alpha^2 \nabla_{xy}^2 f \nabla_{yx}^2 f \right)^{-1}\nabla_{xy}^2 f\nabla_y f\nonumber\\
&\quad\quad-\alpha \beta \nabla_y f^{\top} \nabla_{yx}^2 f\left( I + \alpha^2 \nabla_{yx}^2 f \nabla_{xy}^2 f \right)^{-1}\nabla_{xy}^2 f\nabla_y f\nonumber\\
&\quad\quad+\beta \nabla_x f^{\top}\nabla_{xy}^2 f \left( I + \alpha^2 \nabla_{yx}^2 f \nabla_{xy}^2 f \right)^{-1}\nabla_y f\nonumber\\
&\quad\quad-\alpha \beta \nabla_x f^{\top} \nabla_{xy}^2 f\left( I + \alpha^2 \nabla_{yx}^2 f \nabla_{xy}^2 f \right)^{-1}\nabla_{yx}^2 f\nabla_y f
\end{align}
Using the equality proven in Lemma \eqref{lemma:inverse-transpose} we have,
\begin{align*}
\dot x^{\top} \nabla_{xy}^2 f \nabla_y f + \nabla_x f^{\top} \nabla_{xy}^2 f\dot y &= -\alpha \beta \nabla_x f^{\top} \left( I + \alpha^2 \nabla_{xy}^2 f \nabla_{yx}^2 f \right)^{-1} \nabla_{xy}^2 f \nabla_{yx}^2 f \nabla_x f \\
&\quad\quad-\alpha \beta \nabla_y f^{\top}\left(I + \alpha^2 \nabla_{yx}^2 f \nabla_{xy}^2 f\right)^{-1} \nabla_{yx}^2 f \nabla_{xy}^2 f \nabla_y f\nonumber
\end{align*}
Using the expanded terms in RHS of Eq.~\eqref{crosstermexpansion} back into Eq.~\eqref{dhh1}, we obtain a unified expression,
\begin{align*} 
\dot{\| g_0\|^2} &= 2 \dot x^{\top} \nabla_{xx}^2 f \nabla_x f
-2 \alpha\beta \nabla_x f^{\top} \left(I + \alpha^2 \nabla_{xy}^2 f \nabla_{yx}^2 f\right)^{-1} \nabla_{xy}^2 f \nabla_{yx}^2 f \nabla_x f\\
&\quad\quad+ 2 \dot y^{\top} \nabla_{yy}^2 f \nabla_y f
- 2 \alpha \beta \nabla_y f^{\top} \left(I + \alpha^2 \nabla_{yx}^2 f \nabla_{xy}^2 f\right)^{-1} \nabla_{yx}^2 f \nabla_{xy}^2 f \nabla_y f 
\end{align*}

We now observe that $ \alpha \nabla_{xy}^2 f \dot y +  \beta \nabla_x f  = -\dot x$ and $ \alpha \nabla_{yx}^2 f\dot x + \beta  \nabla_y f = \dot y$, yielding in,
\begin{align}
\dot{ \| g_0\|^2 }
&= - 2 \beta  \nabla_x f^{\top} \nabla_{xx}^2 f \nabla_x f  - 2 \alpha\beta \nabla_x f^{\top} \left(I + \alpha^2 \nabla_{xy}^2 f \nabla_{yx}^2 f\right)^{-1} \nabla_{xy}^2 f \nabla_{xy}^2 f \nabla_x f\nonumber\\
&\quad\quad\quad\quad-2\alpha \dot y^{\top} \nabla_{yx}^2 f\nabla_{xx}^2 f\nabla_{x}\label{plugina}\\
&\quad\quad+ 2 \beta  \nabla_y f^{\top} \nabla_{yy}^2 f \nabla_y f  - 2 \alpha\beta \nabla_y f^{\top} \left(I + \alpha^2 \nabla_{yx}^2 f \nabla_{xy}^2 f\right)^{-1} \nabla_{yx}^2 f \nabla_{xy}^2 f \nabla_y f\nonumber\\
&\quad\quad\quad\quad+ 2\alpha\dot x^{\top} \nabla_{xy}^2 f\nabla_{yy}^2 f\nabla_y f\label{pluginb}
\end{align}
Substituting $\nabla_x f$ and $\nabla_y f$ in lines \eqref{plugina} and \eqref{pluginb} with their equivalences in Eq.~\eqref{contform1}, we get,
\begin{align}
\dot{ \| g_0\|^2 } 
&=- 2 \beta  \nabla_x f^{\top} \nabla_{xx}^2 f \nabla_x f  - 2 \alpha\beta \nabla_x f^{\top} \left(I + \alpha^2 \nabla_{xy}^2 f \nabla_{yx}^2 f\right)^{-1} \nabla_{xy}^2 f \nabla_{xy}^2 f \nabla_x f\nonumber\\
&\quad\quad-2\alpha \dot y^{\top} \nabla_{yx}^2 f\nabla_{xx}^2 f(-\frac{\dot x +\alpha \nabla_{xy}^2 f\dot y}{\beta})\label{transposea}\\
&\quad\quad+ 2 \beta  \nabla_y f^{\top} \nabla_{yy}^2 f \nabla_y f  - 2 \alpha\beta \nabla_y f^{\top} \left(I + \alpha^2 \nabla_{yx}^2 f \nabla_{xy}^2 f\right)^{-1} \nabla_{yx}^2 f \nabla_{xy}^2 f \nabla_y f \nonumber\\
&\quad\quad+ 2\alpha\dot x^{\top} \nabla_{xy}^2 f\nabla_{yy}^2 f\frac{\dot y-\alpha\nabla_{yx}^2 f\dot x}{\beta}\label{transposeb}
\end{align}
Taking transpose of the final terms in lines \eqref{transposea} and \eqref{transposeb}, we obtain,
\begin{align}\label{eq:gdot_cgo_intermediate}
\dot{ \| g_0\|^2 } &=- 2 \beta  \nabla_x f^{\top} \nabla_{xx}^2 f \nabla_x f  - 2 \alpha\beta \nabla_x f^{\top} \left(I + \alpha^2 \nabla_{xy}^2 f \nabla_{yx}^2 f\right)^{-1} \nabla_{xy}^2 f \nabla_{xy}^2 f \nabla_x f^{\top}\nonumber\\
&\quad\quad+ \frac{2}{\beta} \left(\alpha \dot x + \alpha^2 \nabla_{xy}^2 f \dot y \right)^{\top}  \nabla_{xx}^2 f \nabla_{xy}^2 f \dot y\nonumber\\
&\quad\quad+ 2 \beta  \nabla_y f^{\top} \nabla_{yy}^2 f \nabla_y f  - 2 \alpha\beta \nabla_y f^{\top} \left(I + \alpha^2 \nabla_{yx}^2 f \nabla_{xy}^2 f\right)^{-1} \nabla_{yx}^2 f \nabla_{xy}^2 f \nabla_y f^{\top}\nonumber\\
&\quad\quad+ \frac{2}{\beta} \left(\alpha \dot y - \alpha^2 \nabla_{yx}^2 f \dot x \right)^{\top} \nabla_{yy}^2 f \nabla_{yx}^2 f \dot x
\end{align}

We utilize the Peter-Paul inequality to further expand $\nabla_{xx}^2 f$ and $\nabla_{yy}^2 f$ terms in Eq.~\eqref{eq:gdot_cgo_intermediate}. In particular, we derive the following inequalities, 
$$2\dot x^{\top}\nabla_{xx}^2 f\nabla_{xy}^2 f\dot y \leq \|\dot x^{\top}\nabla_{xx}^2 f\|^2 +\|\nabla_{xy}^2 f\dot y\|^2 $$
and $$2\dot x^{\top}\nabla_{xx}^2 f\nabla_{xy}^2 f\dot y \leq \|\dot x^{\top}\nabla_{xx}^2 f\|^2 +\|\nabla_{xy}^2 f\dot y\|^2. $$
Using these inequalities in Eq.~\eqref{eq:gdot_cgo_intermediate}, we have,
\begin{align*}
\| \dot g_0\|^2 &\leq - 2\beta  \nabla_x f^{\top} \nabla_{xx}^2 f \nabla_x f 
-2 \alpha \beta \nabla_x f^{\top} \left(I + \alpha^2 \nabla_{xy}^2 f \nabla_{yx}^2 f\right)^{-1} \nabla_{xy}^2 f \nabla_{xy}^2 f \nabla_x f\\
&\quad\quad+ \frac{1}{\beta}\dot y^{\top} \nabla_{yx}^2 f \left(\alpha I + 2\alpha^2 \nabla_{xx}^2 f \right) \nabla_{xy}^2 f \dot y + 2 \beta \nabla_y f^{\top} \nabla_{yy}^2 f \nabla_y f \\
&\quad\quad  - 2 \alpha\beta \nabla_y f^{\top} \left(I + \alpha^2 \nabla_{yx}^2 f \nabla_{xy}^2 f\right)^{-1} \nabla_{yx}^2 f \nabla_{xy}^2 f \nabla_y f +\frac{1}{\beta}\dot x^{\top} \nabla_{xy}^2 f \left(\alpha I - 2\alpha^2 \nabla_{yy}^2 f \right) \nabla_{yx}^2 f \dot x \\ 
&\quad\quad+ \frac{\alpha}{\beta} \dot y^{\top} \nabla_{yy}^2 f \nabla_{yy}^2 f \dot y+\frac{\alpha}{\beta} \dot x^{\top} \nabla_{xx}^2 f \nabla_{xx}^2 f \dot x
\end{align*}

Considering that $\nabla_{xx}^2 f$ and $\nabla_{yy}^2 f$ are symmetric matrices, we have,
\begin{align}\label{main}
\dot{ \| g_0\|^2 } &\leq 
- 2\beta \nabla_x f^{\top} \nabla_{xx}^2 f \nabla_x f - 2\alpha\beta \nabla_x f^{\top} \left(I + \alpha^2 \nabla_{xy}^2 f \nabla_{yx}^2 f\right)^{-1} \nabla_{xy}^2 f \nabla_{xy}^2 f \nabla_x f \nonumber\\
&\quad\quad+ \frac{\alpha}{\beta} \dot x^{\top} \nabla_{xx}^2 f \nabla_{xx}^2 f \dot x +  \frac{1}{\beta} \left(\alpha  + 2\alpha^2\overline{\lambda_{xx}}\right) \|\nabla_{xy}^2 f \dot y\|^2\nonumber\\
&\quad\quad+ 2 \beta \nabla_y f^{\top} \nabla_{yy}^2 f \nabla_y f  -2 \alpha \beta \nabla_y f^{\top} \left(I + \alpha^2 \nabla_{yx}^2 f \nabla_{xy}^2 f\right)^{-1} \nabla_{yx}^2 f \nabla_{xy}^2 f \nabla_y f \nonumber\\
&\quad\quad+ \frac{\alpha}{\beta} \dot y^{\top} \nabla_{yy}^2 f \nabla_{yy}^2 f \dot y +  \frac{1}{\beta} \left(\alpha  - 2\alpha^2\underline{\lambda_{yy}} \right)\| \nabla_{yx}^2 f \dot x\|^2 
\end{align}
Setting $\overline{\lambda_1} = \max(\overline{\lambda_{xx}},-\underline{\lambda_{yy}})$ we obtain,
\begin{align}\label{lam1}
\dot{ \| g_0\|^2 } &\leq 
- 2\beta \nabla_x f^{\top} \nabla_{xx}^2 f \nabla_x f - 2\alpha\beta \nabla_x f^{\top} \left(I + \alpha^2 \nabla_{xy}^2 f \nabla_{yx}^2 f\right)^{-1} \nabla_{xy}^2 f \nabla_{xy}^2 f \nabla_x f\nonumber \\
&\quad\quad+ \frac{\alpha}{\beta} \dot x^{\top} \nabla_{xx}^2 f \nabla_{xx}^2 f \dot x +  \frac{1}{\beta} \left(\alpha  + 2\alpha^2\overline{\lambda_1}\right) \|\nabla_{xy}^2 f \dot y\|^2\nonumber\\
&\quad\quad+ 2 \beta \nabla_y f^{\top} \nabla_{yy}^2 f \nabla_y f  -2 \alpha \beta \nabla_y f^{\top} \left(I + \alpha^2 \nabla_{yx}^2 f \nabla_{xy}^2 f\right)^{-1} \nabla_{yx}^2 f \nabla_{xy}^2 f \nabla_y f \nonumber\\
&\quad\quad+ \frac{\alpha}{\beta} \dot y^{\top} \nabla_{yy}^2 f \nabla_{yy}^2 f \dot y +  \frac{1}{\beta} \left(\alpha  + 2\alpha^2\overline{\lambda_1} \right)\| \nabla_{yx}^2 f \dot x\|^2 
\end{align}    
Using the update rule in Eq.~\eqref{contform2}, we compute,
\begin{align*}
\left\|\nabla_{yx}^2 f \dot x \right\|^2 &= \beta^2 \left(\nabla_x f + \alpha \nabla_{xy}^2 f \nabla_y f\right)^{\top} \left( I + \alpha^2 \nabla_{xy}^2 f \nabla_{yx}^2 f \right)^{-2} \nabla_{xy}^2 f \nabla_{yx}^2 f \left(\nabla_x f +  \alpha \nabla_{xy}^2 f \nabla_y f\right)\\ 
\left\|\nabla_{xy}^2 f \dot y \right\|^2 &= \beta^2 \left(-\nabla_y f + \alpha \nabla_{yx}^2 f \nabla_x f\right)^{\top} \left( I +\alpha^2 \nabla_{yx}^2 f \nabla_{xy}^2 f \right)^{-2} \nabla_{yx}^2 f \nabla_{xy}^2 f \left(-\nabla_y f + \alpha \nabla_{yx}^2 f \nabla_x f\right).
\end{align*}

by adding up the two equalities above, we obtain, 
\begin{align} \label{eq:sum_of_norms}
\left\|\nabla_{yx}^2 f \dot x\right\|^2 + \left\|\nabla_{xy}^2 f \dot y\right\|^2 &=
\beta^2 \nabla_x f^{\top} \left( I +\alpha^2 \nabla_{xy}^2 f \nabla_{yx}^2 f \right)^{-2} \nabla_{xy}^2 f \nabla_{yx}^2 f \nabla_x f \nonumber\\
&\quad\quad+\beta^2 \nabla_y f^{\top} \left(I +\alpha^2 \nabla_{yx}^2 f \nabla_{xy}^2 f \right)^{-2} \nabla_{yx}^2 f \nabla_{xy}^2 f \nabla_y f\nonumber\\
&\quad\quad+\alpha\beta^2 \nabla_x f^{\top} \left( I +\alpha^2 \nabla_{xy}^2 f \nabla_{yx}^2 f \right)^{-2} \nabla_{xy}^2 f \nabla_{yx}^2 f \nabla_{xy}^2 f\nabla_y f \nonumber\\
&\quad\quad+\alpha\beta^2 \nabla_y f^{\top} \nabla_{yx}^2 f\left(I +\alpha^2 \nabla_{yx}^2 f \nabla_{xy}^2 f \right)^{-2} \nabla_{xy}^2 f \nabla_{yx}^2 f \nabla_x f\nonumber\\
&\quad\quad-\underbrace{\alpha\beta^2 \nabla_x f^{\top} \nabla_{xy}^2 f \left( I +\alpha^2 \nabla_{xy}^2 f \nabla_{yx}^2 f \right)^{-2} \nabla_{yx}^2 f \nabla_{xy}^2 f \nabla_y f}_{{(i)}}\nonumber \\
&\quad\quad-\underbrace{\alpha\beta^2 \nabla_y f^{\top} \left(I +\alpha^2 \nabla_{yx}^2 f \nabla_{xy}^2 f \right)^{-2} \nabla_{yx}^2 f \nabla_{xy}^2 f \nabla_{yx}^2 f \nabla_x f}_{{(ii)}}\nonumber\\
&\quad\quad+\underbrace{\alpha^2 \beta^2 \nabla_x f^{\top} \nabla_{xy}^2 f\left( I +\alpha^2 \nabla_{xy}^2 f \nabla_{yx}^2 f \right)^{-2} \nabla_{yx}^2 f\nabla_{xy}^2 f \nabla_{yx}^2 f \nabla_x f}_{{(iii)}}\nonumber \\
&\quad\quad+\underbrace{\alpha^2 \beta^2 \nabla_y f^{\top}\nabla_{yx}^2 f \left(I +\alpha^2 \nabla_{yx}^2 f \nabla_{xy}^2 f \right)^{-2} \nabla_{xy}^2 f \nabla_{yx}^2 f \nabla_{xy}^2 f \nabla_y f}_{{(iv)}}
\end{align}        
We further analyze the last four terms of the Eq.~\eqref{eq:sum_of_norms}. In particular, we utilize the statement of Lemma \eqref{lemma:inverse-transpose} and for the term $(i)$ in the above equality, we have, 
\begin{align*} 
&\alpha\beta^2 \nabla_x f^{\top} \left( I +\alpha^2 \nabla_{xy}^2 f \nabla_{yx}^2 f \right)^{-2} \nabla_{xy}^2 f \nabla_{yx}^2 f \nabla_{xy}^2 f\nabla_y f \\
&\quad\quad= \alpha\beta^2 \nabla_x f^{\top} 
\nabla_{xy}^2 f \left( I +\alpha^2 \nabla_{xy}^2 f \nabla_{yx}^2 f \right)^{-2} \nabla_{yx}^2 f \nabla_{xy}^2 f \nabla_y f
\end{align*}
correspondingly, for the term $(ii)$, we have,
\begin{align*} 
&\alpha\beta^2 \nabla_y f^{\top} \nabla_{yx}^2 f\left(I +\alpha^2 \nabla_{yx}^2 f \nabla_{xy}^2 f \right)^{-2} \nabla_{xy}^2 f \nabla_{yx}^2 f \nabla_x f\\
&\quad\quad= \alpha\beta^2 \nabla_y f^{\top} \left(I +\alpha^2 \nabla_{yx}^2 f \nabla_{xy}^2 f \right)^{-2} \nabla_{yx}^2 f \nabla_{xy}^2 f \nabla_{yx}^2 f \nabla_x f 
\end{align*}
for the term $(iii)$, we have,
\begin{align*} 
&\alpha^2 \beta^2 \nabla_x f^{\top} \nabla_{xy}^2 f\left( I +\alpha^2 \nabla_{xy}^2 f \nabla_{yx}^2 f \right)^{-2} \nabla_{yx}^2 f\nabla_{xy}^2 f \nabla_{yx}^2 f \nabla_x f\\
&\quad\quad= \alpha^2 \beta^2 \nabla_x f^{\top} \left( I +\alpha^2 \nabla_{xy}^2 f \nabla_{yx}^2 f \right)^{-2}\nabla_{xy}^2 f \nabla_{yx}^2 f\nabla_{xy}^2 f \nabla_{yx}^2 f \nabla_x f
\end{align*}
correspondingly, for the term $(iv)$, we have,
\begin{align*} 
&\alpha^2 \beta^2 \nabla_y f^{\top}\nabla_{yx}^2 f \left(I +\alpha^2 \nabla_{yx}^2 f \nabla_{xy}^2 f \right)^{-2} \nabla_{xy}^2 f \nabla_{yx}^2 f \nabla_{xy}^2 f \nabla_y f\\ 
&\quad\quad= \alpha^2 \beta^2 \nabla_y f^{\top} \left(I +\alpha^2 \nabla_{yx}^2 f \nabla_{xy}^2 f \right)^{-2} \nabla_{yx}^2 f \nabla_{xy}^2 f \nabla_{yx}^2 f \nabla_{xy}^2 f \nabla_y f
\end{align*}
Putting these equalities together in Eq.~\eqref{eq:sum_of_norms}, we have,

\begin{align} \label{P1}
\left\|\nabla_{yx}^2 f \dot x\right\|^2 & + \left\|\nabla_{xy}^2 f \dot y\right\|^2 \nonumber\\
&= \beta^2 \nabla_x f^{\top} \left( I +\alpha^2 \nabla_{xy}^2 f \nabla_{yx}^2 f \right)^{-2} \left(\nabla_{xy}^2 f \nabla_{yx}^2 f +\alpha^2 \nabla_{xy}^2 f \nabla_{yx}^2 f \nabla_{xy}^2 f \nabla_{yx}^2 f \right) \nabla_x f \nonumber \\        
        &\quad\quad+ \beta^2 \nabla_y f^{\top} \left( I + \alpha^2\nabla_{yx}^2 f \nabla_{xy}^2 f \right)^{-2} \left(\nabla_{yx}^2 f \nabla_{xy}^2 f +\alpha^2 \nabla_{yx}^2 f \nabla_{xy}^2 f \nabla_{yx}^2 f \nabla_{xy}^2 f \right) \nabla_y f\nonumber\\
        &=  \beta^2\nabla_x f^{\top} \left(I + \alpha^2 \nabla_{xy}^2 f \nabla_{yx}^2 f\right)^{-1} \nabla_{xy}^2 f \nabla_{yx}^2 f \nabla_x f \nonumber\\
        &\quad\quad+ \beta^2 \nabla_y f^{\top} \left(I +\alpha^2 \nabla_{yx}^2 f \nabla_{xy}^2 f \right)^{-1} \nabla_{yx}^2 f \nabla_{xy}^2 f \nabla_y f
\end{align}        
Plugging this into Eq.~\eqref{lam1} we obtain,
\begin{align}\label{main2}
\dot{ \| g_0\|^2 }
\leq &- 2 \beta \nabla_x f^{\top} \nabla_{xx}^2 f \nabla_x f + \beta(2\alpha^2\overline{\lambda_1}-\alpha)\nabla_x f^{\top} \left(I + \alpha^2 \nabla_{xy}^2 f \nabla_{yx}^2 f\right)^{-1} \nabla_{xy}^2 f \nabla_{xy}^2 f \nabla_x f \nonumber\\
&\quad\quad+ \frac{\alpha}{\beta} \dot x^{\top} \nabla_{xx}^2 f \nabla_{xx}^2 f \dot x+\frac{\alpha}{\beta} \dot y^{\top} \nabla_{yy}^2 f \nabla_{yy}^2 f \dot y \nonumber\\
&\quad\quad+ 2  \beta \nabla_y f^{\top} \nabla_{yy}^2 f \nabla_y f  \nonumber\\
&\quad\quad+  \beta(2\alpha^2\overline{\lambda_1}-\alpha) \nabla_y f^{\top} \left(I + \alpha^2 \nabla_{yx}^2 f \nabla_{xy}^2 f\right)^{-1} \nabla_{yx}^2 f \nabla_{xy}^2 f \nabla_y f 
\end{align}
 We now do the following set of computations,
\begin{align*} 
    \dot x^{\top} \nabla_{xx}^2 f \nabla_{xx}^2 f \dot x+\dot y^{\top} \nabla_{yy}^2 f \nabla_{yy}^2 f \dot y
    &\stackrel{(a)}{=} (-\alpha \nabla_{xy}^2 f\dot y-\beta \nabla_x f)^{\top}  \nabla_{xx}^2 f \nabla_{xx}^2 f  (-\alpha \nabla_{xy}^2 f\dot y-\beta \nabla_x f)\\
    &\quad\quad+ (\alpha \nabla_{yx}^2 f\dot x+ \beta \nabla_y f)^{\top}\nabla_{yy}^2 f \nabla_{yy}^2 f (\alpha \nabla_{yx}^2 f \dot x+ \beta \nabla_y f) \\
    &\stackrel{(b)}{=} \|(\alpha \nabla_{xy}^2 f \dot y+ \beta \nabla_x f)^{\top}\nabla_{xx}^2 f\|^2\\
    & \quad\quad+\|(\alpha \nabla_{yx}^2 f\dot x+\beta \nabla_y f)^{\top}\nabla_{yy}^2 f\|^2 \\
    &\stackrel{(c)}{\leq} 2\alpha^2 \|\nabla_{xy}^2 f \dot y\|^2  \|\nabla_{xx}^2 f\|^2 + 2\alpha^2   \|\nabla_{yx}^2 f \dot x\|^2 \|\nabla_{yy}^2 f \|^2\\
    & \quad\quad+ 2\beta^2\|\nabla_x f \nabla_{xx}^2 f\|^2+2\beta^2\|\nabla_y f
     \nabla_{yy}^2 f\|^2\\
    &\stackrel{(d)}{\leq} 2\beta^2\alpha^2\overline{\lambda_{xx}}^2 \|\nabla_{xy}^2 f \dot y\|^2+2\beta^2  \alpha^2\overline{\lambda_{yy}}^2 \|\nabla_{yx}^2 f \dot x\|^2\\
    & \quad\quad+2 \beta^2 \overline{\lambda_{xx}}^2\nabla_x f^{\top}\nabla_x f + 2\beta^2 \overline{\lambda_{yy}}^2 \nabla_y f^{\top} \nabla_y f\\
    &\stackrel{(e)}{\leq} 2\beta^2\alpha^2\overline{\lambda_2}^2 (\|\nabla_{xy}^2 f \dot y\|^2+\|\nabla_{yx}^2 f \dot x\|^2)\\
    & \quad\quad+2 \beta^2 \overline{\lambda_{xx}}^2\nabla_x f^{\top}\nabla_x f + 2\beta^2 \overline{\lambda_{yy}}^2 \nabla_y f^{\top} \nabla_y f\\
    &\stackrel{(f)}{\leq} 2\beta^2\alpha^2\overline{\lambda_2}^2 \nabla_x f^{\top} \left(I + \alpha^2 \nabla_{xy}^2 f\nabla_{yx}^2 f \right)^{-1} \nabla_{xy}^2 f\nabla_{yx}^2 f \nabla_x f\\
    & \quad\quad+2\beta^2  \alpha^2\overline{\lambda_2}^2 \nabla_y f^{\top} \left(I +\alpha^2 \nabla_{yx}^2 f\nabla_{xy}^2 f  \right)^{-1} \nabla_{yx}^2 f\nabla_{xy}^2 f  \nabla_y f\\
    & \quad\quad+2 \beta^2 \overline{\lambda_{xx}}^2\nabla_x f^{\top}\nabla_x f + 2\beta^2 \overline{\lambda_{yy}}^2 \nabla_y f^{\top} \nabla_y f 
\end{align*}
    Where for $(a)$ we use Eq.~\eqref{contform1} to substitute $\Delta x$ and $\Delta y$, in $(b)$ we re-write the terms as norms, in $(c)$ we use the inequality $\|a+b\|^2\leq2\|a\|^2+2\|b\|^2$, in $(d)$ we bound the terms using the maximum eigenvalues, in $(e)$ we set $\overline{\lambda_2} = \max(\overline{\lambda_{xx}},\overline{\lambda_{yy}})$ and finally for $(f)$ we use Eq.~\eqref{P1}.
    
    Using the above inequality in Eq.~\eqref{main2}, we have,
    \begin{align*}
    \dot{ \| g_0\|^2 } &\leq -   \nabla_x f^{\top} (2\beta \nabla_{xx}^2 f-2  \beta\alpha\overline{\lambda_{xx}}^2I) \nabla_x f +  \nabla_y f^{\top} (2\beta \nabla_{yy}^2 f+2  \beta\alpha\overline{\lambda_{yy}}^2I)  \nabla_y f \\
    &\quad\quad- \beta(\alpha-2\alpha^2\overline{\lambda_{1}}-2\alpha^3\overline{\lambda_2}^2) \nabla_y f^{\top} \left(I + \alpha^2 \nabla_{yx}^2 f \nabla_{xy}^2 f\right)^{-1} \nabla_{yx}^2 f \nabla_{xy}^2 f \nabla_y f \\
    &\quad\quad- \beta(\alpha-2\alpha^2\overline{\lambda_{1}}-2\alpha^3\overline{\lambda_2}^2)\nabla_x f^{\top} \left(I + \alpha^2 \nabla_{xy}^2 f \nabla_{yx}^2 f\right)^{-1} \nabla_{xy}^2 f \nabla_{xy}^2 f \nabla_x f
    \end{align*}
    
By rearranging the above inequality, we get,
    
    \begin{align*}
    \dot{ \| g_0 \|^2 } &
    \leq - \nabla_x f^{\top} \left( (2\beta \nabla_{xx}^2 f-2  \beta\alpha\overline{\lambda_{xx}}^2I)+ \beta(\alpha-2\alpha^2\overline{\lambda_1}-2\alpha^3\overline{\lambda_2}^2) \left(I + \alpha^2 \nabla_{xy}^2 f \nabla_{yx}^2 f\right)^{-1} \nabla_{xy}^2 f \nabla_{yx}^2 f \right)\nabla_x f\\
    & \quad\quad - \nabla_y f^{\top} \left(- (2\beta \nabla_{yy}^2 f+2  \beta\alpha\overline{\lambda_{yy}}^2I) + \beta(\alpha-2\alpha^2\underline{\lambda_1}-2\alpha^3\overline{\lambda_2}^2) \left(I + \alpha^2 \nabla_{yx}^2 f \nabla_{xy}^2 f\right)^{-1} \nabla_{yx}^2 f \nabla_{xy}^2 f \right) \nabla_y f    \\
    &\leq -\|g_0\|^2 \min \{\lambda_{\min}((2\beta \nabla_{xx}^2 f-2  \beta\alpha\overline{\lambda_{xx}}^2I)+ \beta(\alpha-2\alpha^2\overline{\lambda_1}-2\alpha^3\overline{\lambda_2}^2) \left(I + \alpha^2 \nabla_{xy}^2 f \nabla_{yx}^2 f\right)^{-1} \nabla_{xy}^2 f \nabla_{yx}^2 f),\\
    &\quad\quad\lambda_{\min}(- (2\beta \nabla_{yy}^2 f+2  \beta\alpha\overline{\lambda_{yy}}^2I) + \beta(\alpha-2\alpha^2\underline{\lambda_1}-2\alpha^3\overline{\lambda_2}^2) \left(I + \alpha^2 \nabla_{yx}^2 f \nabla_{xy}^2 f\right)^{-1} \nabla_{yx}^2 f \nabla_{xy}^2 f)\}
    \end{align*}
  
which is the key Lyapunov inequality. 
Thus, under the conditions expressed in the statement of the main Theorem, i.e., $\lambda$, as defined in the following is positive,
    \begin{align}
    \lambda &:= \min \{\lambda_{\min}((2\beta \nabla_{xx}^2 f-2  \beta\alpha\overline{\lambda_{xx}}^2I)+ \beta(\alpha-2\alpha^2\overline{\lambda_1}-2\alpha^3\overline{\lambda_2}^2) \left(I + \alpha^2 \nabla_{xy}^2 f \nabla_{yx}^2 f\right)^{-1} \nabla_{xy}^2 f \nabla_{yx}^2 f),\label{keyinequalityresulta}\\
    &\quad\quad\lambda_{\min}(- (2\beta \nabla_{yy}^2 f+2  \beta\alpha\overline{\lambda_{yy}}^2I) + \beta(\alpha-2\alpha^2\underline{\lambda_1}-2\alpha^3\overline{\lambda_2}^2) \left(I + \alpha^2 \nabla_{yx}^2 f \nabla_{xy}^2 f\right)^{-1} \nabla_{yx}^2 f \nabla_{xy}^2 f)\label{keyinequalityresultb}\}
    \end{align}    
the quantity $\|g_0\|^2$ converges to zero exponentially fast with the rate at least $\lambda$.
    \end{proof}
    
Now, we simplify the above expression of the rate using Lemmas \eqref{lemma:eigen-properties} and \eqref{lemma:eigen-properties1} to address the $1^{st}$ and $2^{nd}$ terms respectively in lines \eqref{keyinequalityresulta} and \eqref{keyinequalityresultb},
\begin{align*}
    \lambda_{min} &\geq \beta \min\{2 \underline{\lambda_{xx}}-2  \alpha\overline{\lambda_{xx}}^2+ \beta(\alpha-2\alpha^2\overline{\lambda_1}-2\alpha^3\overline{\lambda_2}^2)\frac{\underline{\lambda_{xy}}}{1+\alpha^2\underline{\lambda_{xy}}
},\\
&\quad\quad-2 \overline{\lambda_{yy}}-2  \alpha\overline{\lambda_{yy}}^2+ \beta(\alpha-2\alpha^2\overline{\lambda_1}-2\alpha^3\overline{\lambda_2}^2)\frac{\underline{\lambda_{yx}}}{1+\alpha^2\underline{\lambda_{yx}}
}\}
\end{align*}

To better understand the above results, we set some relations between the quantities in the above expression. If we set $\alpha$ such that $\overline{\lambda_{xx}} \leq \frac{1}{5\alpha};\underline{\lambda_{xx}} \geq -\frac{1}{5\alpha};\underline{\lambda_{yx}},\underline{\lambda_{xy}}\leq \frac{K}{\alpha^2};\underline{\lambda_{yy}} \geq -\frac{1}{5\alpha};\overline{\lambda_{yy}} \leq \frac{1}{5\alpha};K\gg1$. We have $\overline{\lambda_1},\overline{\lambda_1}\leq \frac{1}{5\alpha}$ and we obtain $\lambda_{min} \geq \frac{1}{50\alpha}$.\\ This shows that as long as the interaction terms $\underline{\lambda_{yx}},\underline{\lambda_{xy}}$ are of the order of the square of the deviation of the pure terms $\underline{\lambda_{xx}},\overline{\lambda_{yy}}$ (from the convex-concave condition i.e. $\underline{\lambda_{xx}}\geq 0,\overline{\lambda_{yy}}\leq 0$), we can guarantee convergence for $\cgo$

Statements and proofs of the Lemmas used in the above derivation are provided below,
\begin{lemma} \label{lemma:inverse-transpose} The following equality holds, 
$$\nabla_{yx}^2 f(I + \alpha^2 \nabla_{xy}^2 f \nabla_{yx}^2 f)^{-1} = (I + \alpha^2 \nabla_{yx}^2 f \nabla_{xy}^2 f)^{-1}\nabla_{yx}^2 f.$$
\end{lemma}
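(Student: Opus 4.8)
The plan is to establish the identity via the standard ``push-through'' manipulation, reducing it to a trivial polynomial identity in the two Hessian blocks. Throughout, I abbreviate $B := \nabla_{yx}^2 f$ and $C := \nabla_{xy}^2 f$, so that the claim reads $B(I + \alpha^2 C B)^{-1} = (I + \alpha^2 B C)^{-1} B$.

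First I would verify that both matrices being inverted are invertible, so that the statement is well posed. Since the preliminaries already record that $\nabla_{xy}^2 f \nabla_{yx}^2 f = CB$ and $\nabla_{yx}^2 f \nabla_{xy}^2 f = BC$ are positive semi-definite, the matrices $I + \alpha^2 CB$ and $I + \alpha^2 BC$ are positive definite (the identity plus a positive semi-definite matrix), hence invertible for every value of $\alpha$. This disposes of the only well-definedness concern.

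The key step is the elementary observation that $B$ can be \emph{commuted past} the respective factors at the level of the un-inverted matrices:
\begin{equation*}
(I + \alpha^2 B C)\, B = B + \alpha^2 B C B = B\,(I + \alpha^2 C B),
\end{equation*}
since both sides equal $B + \alpha^2 BCB$. I would then left-multiply this identity by $(I + \alpha^2 BC)^{-1}$ and right-multiply by $(I + \alpha^2 CB)^{-1}$; the outer factors cancel in turn, yielding $(I + \alpha^2 BC)^{-1} B = B (I + \alpha^2 CB)^{-1}$, which is exactly the asserted equality once I substitute back $B = \nabla_{yx}^2 f$ and $C = \nabla_{xy}^2 f$.

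There is essentially no hard part here: the entire content is the single polynomial identity displayed above, and the one potential subtlety---invertibility of the two factors---is handled immediately by the positive semi-definiteness noted earlier. I would only take care to keep the order of the left and right multiplications consistent, since $B$ and $C$ need not commute and the two inverses act on opposite sides of $B$.
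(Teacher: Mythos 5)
Your proposal is correct and follows essentially the same route as the paper: both establish the polynomial identity $(I+\alpha^2 BC)B = B+\alpha^2 BCB = B(I+\alpha^2 CB)$ and then multiply by the two inverses from the appropriate sides. Your added remark on invertibility (via positive semi-definiteness of $BC$ and $CB$) is a small but welcome bit of extra care that the paper's proof leaves implicit.
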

\begin{proof}
To prove this equality statement, we write,

\begin{align*}
\nabla_{yx}^2 f  + \alpha^2 \nabla_{yx}^2 f \nabla_{xy}^2 f \nabla_{yx}^2 f &= (I + \alpha^2 \nabla_{yx}^2 f \nabla_{xy}^2 f) \nabla_{yx}^2 f
\end{align*}
and at the same time, 
\begin{align*}
\nabla_{yx}^2 f  + \alpha^2 \nabla_{yx}^2 f \nabla_{xy}^2 f \nabla_{yx}^2 f &=
\nabla_{yx}^2 f  (I + \alpha^2 \nabla_{xy}^2 f \nabla_{yx}^2 f)
\end{align*}
therefore, we have,
\begin{align*}
(I + \alpha^2 \nabla_{yx}^2 f \nabla_{xy}^2 f) \nabla_{yx}^2 f = \nabla_{yx}^2 f  (I + \alpha^2 \nabla_{xy}^2 f \nabla_{yx}^2 f)
\end{align*}
Multiplying both sides with the inverse of $(I + \alpha^2 \nabla_{yx}^2 f \nabla_{xy}^2 f)$ from the left, and the inverse of $(I + \alpha^2 \nabla_{xy}^2 f \nabla_{yx}^2 f)$ from the right results in,
\begin{align*}
\nabla_{yx}^2 f(I + \alpha^2 \nabla_{xy}^2 f \nabla_{yx}^2 f)^{-1} &= (I + \alpha^2 \nabla_{yx}^2 f \nabla_{xy}^2 f)^{-1}\nabla_{yx}^2 f
\end{align*}
which is the statement of the Lemma.
\end{proof}

\begin{lemma} \label{lemma:eigen-properties}
The following inequality holds, 
$\lambda_{\min}(A+B) \geq \lambda_{\min}(A)+\lambda_{\min}(B),\quad\forall A,B\in \S^{n}_+$.
\end{lemma}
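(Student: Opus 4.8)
The plan is to prove this via the variational (Rayleigh quotient) characterization of the smallest eigenvalue of a symmetric matrix. For any symmetric $M \in \Real^{n\times n}$, the Courant--Fischer theorem gives
\[
\lambda_{\min}(M) = \min_{\|v\|_2 = 1} v^\top M v,
\]
with the minimum attained at a unit eigenvector associated with $\lambda_{\min}(M)$. Since $A, B \in \S^{n}_+$ are in particular symmetric, so is $A+B$, and this characterization applies to all three matrices.

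First I would expand the quadratic form to write $\lambda_{\min}(A+B) = \min_{\|v\|_2=1} v^\top (A+B) v = \min_{\|v\|_2=1}\big(v^\top A v + v^\top B v\big)$. The key step is then the elementary superadditivity of the minimum: for any two functions $\phi, \psi$ on the unit sphere,
\[
\min_{\|v\|_2=1}\big(\phi(v)+\psi(v)\big) \ge \min_{\|v\|_2=1}\phi(v) + \min_{\|v\|_2=1}\psi(v),
\]
because evaluating the left-hand sum at its own minimizer $v^\star$ gives $\phi(v^\star)+\psi(v^\star) \ge \min_{\|v\|_2=1}\phi + \min_{\|v\|_2=1}\psi$. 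Applying this with $\phi(v) = v^\top A v$ and $\psi(v) = v^\top B v$ yields exactly $\lambda_{\min}(A+B) \ge \lambda_{\min}(A) + \lambda_{\min}(B)$, the claim.

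There is essentially no hard step here: the result is the $\lambda_{\min}$ case of Weyl's inequality, and the only content is the observation that the joint minimizer of a sum cannot do better than the separately chosen minimizers of each term. I would note in passing that positive semidefiniteness is not actually used---symmetry alone suffices---so the restriction to $\S^{n}_+$ in the statement reflects only the downstream application. If one prefers to avoid invoking Courant--Fischer as a black box, I would instead take a unit eigenvector $v^\star$ of $A+B$ attaining its smallest eigenvalue and bound $\lambda_{\min}(A+B) = v^{\star\top} A v^\star + v^{\star\top} B v^\star \ge \lambda_{\min}(A) + \lambda_{\min}(B)$, using $v^{\star\top} A v^\star \ge \lambda_{\min}(A)\|v^\star\|_2^2 = \lambda_{\min}(A)$ and likewise for $B$.
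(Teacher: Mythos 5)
Your proof is correct and follows essentially the same route as the paper's: evaluate the quadratic form of $A+B$ at the unit eigenvector attaining $\lambda_{\min}(A+B)$ and lower-bound each of $v^{\star\top}Av^\star$ and $v^{\star\top}Bv^\star$ by the respective minimum eigenvalues. Your write-up is in fact the more careful of the two (the paper's first displayed inequality has its direction reversed as written), and your observation that only symmetry, not positive semidefiniteness, is needed is accurate.
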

\begin{proof}
We know,
\begin{align*}
\|\Delta x\|^2\lambda_{min}(A+B) \geq x^\top (A+B) x = x^\top A x+ x^\top B x , \quad\forall x
\end{align*}

the following also holds,

$$x^\top A x+ x^\top B x  \geq \|\Delta x\|^2 \lambda_{min}(A)+\|\Delta x\|^2\lambda_{min}(B),\quad\forall x$$
Choosing $x$ not equal to zero we complete the proof.
\end{proof}

\begin{lemma} \label{lemma:eigen-properties1}
Let $B\in \S^n_+$, if (I+B) is invertible, $\lambda_{min}((I+B)^{-1}B)\geq \frac{\underline{\lambda_b}}{1+\underline{\lambda_b}}$, where $\underline{\lambda_b}=\lambda_{min}(B)$
\end{lemma}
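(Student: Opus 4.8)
The plan is to reduce the matrix statement to a scalar one by simultaneously diagonalizing $B$ and $(I+B)^{-1}B$. Since $B \in \S^n_+$ is symmetric and positive semi-definite, the spectral theorem gives an orthogonal matrix $U$ and a diagonal $\Lambda = \mathrm{diag}(\lambda_1,\dots,\lambda_n)$ with $\lambda_i \ge 0$ such that $B = U\Lambda U^\top$. The invertibility of $I+B$ assumed in the hypothesis is in fact automatic here, because each $1+\lambda_i \ge 1 > 0$, and one has $(I+B)^{-1} = U(I+\Lambda)^{-1}U^\top$.

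First I would compute the product directly: $(I+B)^{-1}B = U(I+\Lambda)^{-1}\Lambda U^\top$. The key observation is that $(I+\Lambda)^{-1}\Lambda$ is diagonal with entries $\lambda_i/(1+\lambda_i)$, so $(I+B)^{-1}B$ is itself symmetric and its eigenvalues are exactly $\{\lambda_i/(1+\lambda_i)\}_{i=1}^n$. This is where the symmetric/PSD hypothesis is essential: because both $(I+B)^{-1}$ and $B$ are functions of $B$ they commute, which is precisely what makes the product diagonalizable in the same orthonormal basis and guarantees that its spectrum takes this clean form.

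It then remains to identify the smallest of these eigenvalues. I would use that the scalar map $t \mapsto t/(1+t) = 1 - 1/(1+t)$ is strictly increasing on $[0,\infty)$; consequently the minimum over $i$ of $\lambda_i/(1+\lambda_i)$ is attained at the smallest $\lambda_i$, namely $\underline{\lambda_b} = \lambda_{\min}(B)$. This yields $\lambda_{\min}((I+B)^{-1}B) = \underline{\lambda_b}/(1+\underline{\lambda_b})$, which gives the claimed bound (in fact as an equality).

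There is no genuine obstacle in this argument; the only point requiring care is not to treat $(I+B)^{-1}B$ as a generic product of matrices, whose eigenvalues need not be controlled by those of the individual factors. The symmetric PSD assumption on $B$ is exactly what removes that difficulty, since it forces the two factors to share an eigenbasis and reduces the whole statement to the monotonicity of a one-dimensional function.
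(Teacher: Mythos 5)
Your proof is correct, and it takes a genuinely different route from the paper's. The paper first rewrites $(I+B)^{-1}B = I - (I+B)^{-1}$ and then invokes its Lemma~\ref{lemma:eigen-properties} (the superadditivity $\lambda_{\min}(A+B)\geq\lambda_{\min}(A)+\lambda_{\min}(B)$) together with $\lambda_{\min}\bigl(-(I+B)^{-1}\bigr) = -\tfrac{1}{1+\underline{\lambda_b}}$ to conclude. You instead diagonalize $B = U\Lambda U^\top$ by the spectral theorem, observe that $(I+B)^{-1}B = U(I+\Lambda)^{-1}\Lambda U^\top$ because the two factors share the eigenbasis of $B$, and reduce everything to the monotonicity of $t\mapsto t/(1+t)$ on $[0,\infty)$. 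Your argument buys two things: it delivers the \emph{exact} spectrum $\{\lambda_i/(1+\lambda_i)\}$ and hence the bound as an equality rather than an inequality, and it is self-contained, whereas the paper's derivation leans on Lemma~\ref{lemma:eigen-properties}, which is stated there only for $A,B\in\S^n_+$ and is being applied to the matrix $-(I+B)^{-1}$, which is negative definite (the inequality is of course valid for all symmetric matrices, but the paper does not say so). The paper's identity-based argument is shorter on the page and avoids writing out a diagonalization, but your version is the more transparent and slightly more rigorous of the two.
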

\begin{proof}
We can write the following,
\begin{align*}
(I+B)^{-1}B = (I+B)^{-1}B = I-(I+B)^{-1}
\end{align*}

From the statement of Lemma~\eqref{lemma:eigen-properties} we can write,
\begin{align*}
\lambda_{min}((I+B)^{-1}B) \geq \lambda_{min}(I)+\lambda_{min}(-(I+B)^{-1})
\end{align*}
Hence we have,
\begin{align*}
\lambda_{min}((I+B)^{-1}B) \geq 1+(-\frac{1}{1+\underline{\lambda_b}}) = \frac{\underline{\lambda_b}}{1+\underline{\lambda_b}}
\end{align*}

which is the statement of the Lemma.
\end{proof}


\section{Discrete time \gda}\label{discretegdasection}
In this section, we present the analysis of the discrete time \gda algorithm for completeness. We first present the optimization problem and then derive \gda convergence conditions and convergence rate.

To come up with the update rule, we solve the below optimization problem,
\begin{align}
    \begin{split}
    \label{eqn:localgame1}
    \min_{\delta x \in \Reals^m} \delta  x^{\top} \nabla_x f &+ \delta y^{\top} \nabla_y f + \frac{1}{2\eta} \delta x^{\top} \delta x \\
    \max_{\delta y \in \Reals^n} \delta  y^{\top} \nabla_y f &+ \delta x^{\top} \nabla_x f - \frac{1}{2\eta} \delta y^{\top} \delta y.
    \end{split}
\end{align}
Which gives,
\begin{align} \label{eq:update-rule2}
\begin{bmatrix}
 \Delta x\\
 \Delta y
\end{bmatrix} &= -\eta \begin{bmatrix}
\nabla_x f \\
-\nabla_y f 
\end{bmatrix}
\end{align}

We now write the Taylor expansion of $\nabla_x f,\nabla_y f$ around the $(x,y)$,
    \begin{align*}   
        & \nabla_x f (\Delta x + x, \Delta y + y) 
        = \nabla_x f(x,y) + \nabla_{xx}^2 f \Delta x + \nabla_{xy}^2 f \Delta y 
        + \R_x(\Delta x,\Delta y)\\
        & \nabla_y f(\Delta x + x, \Delta y + y) = \nabla_y f(x,y) + \nabla_{yy}^2 f \Delta y + \nabla_{yx}^2 f \Delta x + \R_y(\Delta x,\Delta y)
    \end{align*}   
    where the remainder terms $\R_x$ and $\R_y$ are defined as,
    \begin{align}\label{gdaremaindersa}
    &\R_x(\Delta x,\Delta y) \defeq \int \limits_0^1 \left( \left(\nabla_{xx}^2 f(t \Delta x + x, t \Delta y + y) - \nabla_{xx}^2 f\right) \Delta x + \left(\nabla_{xy}^2 f(t \Delta x + x, t \Delta y + y) - \nabla_{xy}^2 f\right) \Delta y \right) \mathrm{ d}t \\
    &\R_y(\Delta x,\Delta y) \defeq \int \limits_0^1 \left( \left(\nabla_{yy}^2 f(t \Delta x + x, t \Delta y + y) - \nabla_{yy}^2 f\right) \Delta y + \left(\nabla_{yx}^2 f(t \Delta x + x, t \Delta y + y) - \nabla_{yx}^2 f\right) \Delta x \right) \mathrm{d}t \nonumber \end{align}   
    
    Using this equality, we obtain,
    \begin{align*}   
    &\left\| \nabla_x f \left(\Delta x + x, \Delta y + y\right) \right\|^2 + \left\| \nabla_y f\left(\Delta x + x, \Delta y + y\right) \right\|^2 
    -\|\nabla_x f (x,y)\|^2 - \|\nabla_y f(x,y)\|^2 \\
    &\quad\quad= 2 \Delta x^{\top} \nabla_{xx}^2 f \nabla_x f (x,y) +  2 \nabla_x f (x,y) ^{\top} \nabla_{xy}^2 f \Delta y + \Delta x^{\top} \nabla_{xx}^2 f \nabla_{xx}^2 f \Delta x \\
    &\quad\quad\quad\quad+ 2 \Delta y^{\top} \nabla_{yy}^2 f  \nabla_y f(x,y) +  2 \nabla_y f(x,y) ^{\top} \nabla_{yx}^2 f \Delta x + \Delta y^{\top} \nabla_{yy}^2 f \nabla_{yy}^2 f \Delta y + \\
    &\quad\quad\quad\quad+ \Delta y^{\top} \nabla_{yx}^2 f \nabla_{xy}^2 f \Delta y + 
    \Delta x^{\top} \nabla_{xy}^2 f \nabla_{yx}^2 f \Delta x+ 2 \Delta x^{\top} \nabla_{xx}^2 f \nabla_{xy}^2 f \Delta y+2 \Delta y^{\top} \nabla_{yy}^2 f \nabla_{yx}^2 f \Delta x    \\
    &\quad\quad\quad\quad+ 2\nabla_x f (x,y) ^{\top} \R_x(\Delta x,\Delta y)  + 2\Delta x^{\top} \nabla_{xx}^2 f \R_x(\Delta x,\Delta y) + 2 \Delta y^{\top} \nabla_{yx}^2 f \R_x(\Delta x,\Delta y) + \|\R_x(\Delta x,\Delta y)\|^2\\
    &\quad\quad\quad\quad+ 2\nabla_y f(x,y) ^{\top} \R_y(\Delta x,\Delta y)  + 2\Delta y^{\top} \nabla_{yy}^2 f \R_y(\Delta x,\Delta y) + 2 \Delta x^{\top} \nabla_{xy}^2 f \R_y(\Delta x,\Delta y) + \|\R_y(\Delta x,\Delta y)\|^2
    \end{align*}   
    
    Substituting $\Delta x = -\eta \nabla_x f \left(x, y\right)$ and $\Delta  y = \eta \nabla_y f\left(x, y\right)$ we obtain,
    
    \begin{align*}   
    &\left\| \nabla_x f \left(\Delta x + x, \Delta y + y\right) \right\|^2 + \left\| \nabla_y f\left(\Delta x + x, \Delta y + y\right) \right\|^2 
    -\|\nabla_x f (x,y)\|^2 - \|\nabla_y f(x,y)\|^2 \\
    &\quad\quad= -2\eta \nabla_x f (x,y)^{\top} \nabla_{xx}^2 f \nabla_x f (x,y) + 2 \eta \nabla_y f(x,y)^{\top} \nabla_{yy}^2 f  \nabla_y f(x,y) \\
    &\quad\quad\quad\quad+ 2 \eta^2 \nabla_y f\left(x, y\right)^{\top} \nabla_{yy}^2 f \nabla_{yx}^2 f \nabla_x f \left(x, y\right) - 2\eta^2  \nabla_x f \left(x, y\right)^{\top} \nabla_{xx}^2 f \nabla_{xy}^2 f \nabla_y f\left(x, y\right) \\ 
    &\quad\quad\quad\quad+ \underbrace{2 \eta \nabla_x f (x,y) ^{\top} \nabla_{xy}^2 f \nabla_y f(x,y)}_{{(i)}}- \underbrace{2\eta \nabla_y f(x,y) ^{\top} \nabla_{yx}^2 f \nabla_x f \left(x, y\right)}_{{(ii)}} \\ 
    &\quad\quad\quad\quad+ \eta^2 \nabla_y f\left(x, y\right)^{\top} \nabla_{yy}^2 f \nabla_{yy}^2 f \nabla_y f\left(x, y\right)  + \eta^2 \nabla_x f (x,y)^{\top} \nabla_{xx}^2 f \nabla_{xx}^2 f \nabla_x f (x,y)\\
    &\quad\quad\quad\quad+  \eta^2 \nabla_y f\left(x, y\right)^{\top} \nabla_{yx}^2 f \nabla_{xy}^2 f \nabla_y f\left(x, y\right) + \eta^2 \nabla_x f \left(x, y\right)^{\top} \nabla_{xy}^2 f \nabla_{yx}^2 f \nabla_x f \left(x, y\right)\\
    &\quad\quad\quad\quad+ 2\nabla_x f (x,y) ^{\top} \R_x(\Delta x,\Delta y)  + 2\Delta x^{\top} \nabla_{xx}^2 f \R_x(\Delta x,\Delta y) + 2 \Delta y^{\top} \nabla_{yx}^2 f \R_x(\Delta x,\Delta y) + \|\R_x(\Delta x,\Delta y)\|^2\\
    &\quad\quad\quad\quad+ 2\nabla_y f(x,y) ^{\top} \R_y(\Delta x,\Delta y)  + 2\Delta y^{\top} \nabla_{yy}^2 f \R_y(\Delta x,\Delta y) + 2 \Delta x^{\top} \nabla_{xy}^2 f \R_y(\Delta x,\Delta y) + \|\R_y(\Delta x,\Delta y)\|^2
    \end{align*}   
     The terms $(i)$ and $(ii)$ in the RHS cancel out. Using the Cauchy-Schwarz inequality we obtain,
     
     \begin{align}\label{gdaboundremainder}
    2\nabla_x f(x,y) ^{\top} \R_x(\Delta x,\Delta y)&\leq 2\|\nabla_x f (x,y)\|\| \R_x(\Delta x,\Delta y)\|\nonumber\\
    2\nabla_y f(x,y) ^{\top} \R_y(\Delta x,\Delta y)&\leq
    2\|\nabla_y f (x,y)\|\| \R_y(\Delta x,\Delta y)\|
     \end{align}
    
    Using the upper bounds on $2\nabla_x f(x,y) ^{\top} \R_x(\Delta x,\Delta y)$ and $2\nabla_y f(x,y) ^{\top} \R_y(\Delta x,\Delta y)$ derived in Eq.~\eqref{gdaboundremainder} we obtain,
    \begin{align*}   
    &\left\| \nabla_x f \left(\Delta x + x, \Delta y + y\right) \right\|^2 + \left\| \nabla_y f\left(\Delta x + x, \Delta y + y\right) \right\|^2 
    -\|\nabla_x f (x,y)\|^2 - \|\nabla_y f(x,y)\|^2 \\
    &\quad\quad= -2\eta \nabla_x f (x,y)^{\top} \nabla_{xx}^2 f \nabla_x f (x,y) + 2 \eta \nabla_y f(x,y)^{\top} \nabla_{yy}^2 f  \nabla_y f(x,y) \\
    &\quad\quad\quad\quad+ 2 \eta^2 \nabla_y f\left(x, y\right)^{\top} \nabla_{yy}^2 f \nabla_{yx}^2 f \nabla_x f \left(x, y\right) - 2\eta^2  \nabla_x f \left(x, y\right)^{\top} \nabla_{xx}^2 f \nabla_{xy}^2 f \nabla_y f\left(x, y\right) \\ 
    &\quad\quad\quad\quad+ 2\eta^2 \nabla_y f\left(x, y\right)^{\top} \nabla_{yy}^2 f \nabla_{yy}^2 f \nabla_y f\left(x, y\right)  + 2\eta^2 \nabla_x f (x,y)^{\top} \nabla_{xx}^2 f \nabla_{xx}^2 f \nabla_x f (x,y)\\
    &\quad\quad\quad\quad+  \eta^2 \nabla_y f\left(x, y\right)^{\top} \nabla_{yx}^2 f \nabla_{xy}^2 f \nabla_y f\left(x, y\right) + \eta^2 \nabla_x f \left(x, y\right)^{\top} \nabla_{xy}^2 f \nabla_{yx}^2 f \nabla_x f \left(x, y\right)\\
    &\quad\quad\quad\quad+2\|\nabla_x f \left(x, y\right)\|\|\R_x(\Delta x,\Delta y)\|+2\|\nabla_y f\left(x, y\right)\|\|\R_y(\Delta x,\Delta y)\|\\
    &\quad\quad\quad\quad+ 4\|\R_x(\Delta x,\Delta y)\|^2+ 4\|\R_y(\Delta x,\Delta y)\|^2
    \end{align*}   
Rearranging we obtain,

    \begin{align*}   
    &\left\| \nabla_x f \left(\Delta x + x, \Delta y + y\right) \right\|^2 + \left\| \nabla_y f\left(\Delta x + x, \Delta y + y\right) \right\|^2 
    -\|\nabla_x f (x,y)\|^2 - \|\nabla_y f(x,y)\|^2 \\
    &\quad\quad= \nabla_x f (x,y)^{\top}\left(\eta^2 \nabla_{xx}^2 f^2 -2\eta \nabla_{xx}^2 f+\eta^2 \nabla_{xy}^2 f \nabla_{yx}^2 f\right)\nabla_x f (x,y)\\
    &\quad\quad\quad\quad+ \nabla_y f(x,y)^{\top}\left(\eta^2 \nabla_{yy}^2 f^2 +2\eta \nabla_{yy}^2 f+\eta^2 \nabla_{yx}^2 f \nabla_{xy}^2 f\right)\nabla_y f(x,y)\\ 
    &\quad\quad\quad\quad+  2\eta^2 \nabla_x f \left(x, y\right)^{\top}\left(\nabla_{xy}^2 f \nabla_{yy}^2 f-\nabla_{xx}^2 f\nabla_{xy}^2 f \right)\nabla_y f\left(x, y\right) \\
    &\quad\quad\quad\quad+ 4\|\R_x(\Delta x,\Delta y)\|^2+ 4\|\R_y(\Delta x,\Delta y)\|^2+ 2\|\nabla_x f \left(x, y\right)\|\|\R_x(\Delta x,\Delta y)\|\\
    &\quad\quad\quad\quad+2\|\nabla_y f\left(x, y\right)\|\|\R_y(\Delta x,\Delta y)\|
    \end{align*}   
    
    To conclude, we need to bound the $\R$ terms.
    Using the Lipschitz-continuity of the Hessian and Eq.~\eqref{gdaremaindersa}, we can bound the remainder terms as,
    
    \begin{equation} 
        \|\R_x(\Delta x,\Delta y)\|, \|\R_y(\Delta x,\Delta y)\| \leq L_{xy}(\|\Delta x\| +\|\Delta y\|)^2
    \end{equation}
    Using Eq.~\eqref{eq:update-rule2}, we get,
    \begin{align*} 
        &\|\Delta x\|^2 +\|\Delta y\|^2 =  \eta^2(\|\nabla_x f (x,y)\|^2 + \|\nabla_y f(x,y)\|^2)
    \end{align*}    
    
    Hence we have,
    \begin{align*} 
        &L_{xy}(\|\Delta x\| +\|\Delta y\|)^2 \leq 2L_{xy}(\|\Delta x\|^2+\|\Delta y\|^2) \leq 2\eta^2L_{xy} (\|\nabla_x f (x,y)\|^2 + \|\nabla_y f(x,y)\|^2)
    \end{align*}    

    Thus,
    \begin{align*}   
    &\left\| \nabla_x f \left(\Delta x + x, \Delta y + y\right) \right\|^2 + \left\| \nabla_y f\left(\Delta x + x, \Delta y + y\right) \right\|^2 
    -\|\nabla_x f (x,y)\|^2 - \|\nabla_y f(x,y)\|^2 \\
    &\quad\quad\leq \nabla_x f (x,y)^{\top}\Big(\eta^2 \nabla_{xx}^2 f^2 -2\eta \nabla_{xx}^2 f+2\eta^2 \nabla_{xy}^2 f \nabla_{yx}^2 f \\
    &\quad\quad\quad\quad\quad\quad+4\eta^2L_{xy} (\|\nabla_x f (x,y)\| + \|\nabla_y f(x,y)\|)\Big)\nabla_x f (x,y)\\
    &\quad\quad\quad\quad+ \nabla_y f(x,y)^{\top}\Big(\eta^2 \nabla_{yy}^2 f^2 +2\eta \nabla_{yy}^2 f+2\eta^2 \nabla_{yx}^2 f \nabla_{xy}^2 f\\
    &\quad\quad\quad\quad\quad\quad+4\eta^2L_{xy} (\|\nabla_x f (x,y)\| + \|\nabla_y f(x,y)\|) \Big)\nabla_y f(x,y)\\ 
    &\quad\quad\quad\quad+  \underbrace{2\eta^2 \nabla_x f \left(x, y\right)^{\top}\left(\nabla_{xy}^2 f \nabla_{yy}^2 f-\nabla_{xx}^2 f\nabla_{xy}^2 f \right)\nabla_y f\left(x, y\right)}_{{(i)}} \\
    &\quad\quad\quad\quad+ 8\eta^2 L_{xy} (\|\nabla_x f (x,y)\|^2 + \|\nabla_y f(x,y)\|^2)
    \end{align*}   
    
    We further use the following inequality,
    $$a^{\top}Ab = \frac{1}{2}a^{\top}Ab+\frac{1}{2}b^{\top}A^\top a \stackrel{(a)}{\leq}\frac{1}{4}( a^{\top}(AA^{\top}+I)a+ b^{\top}(A^{\top}A+I)b)$$
    (where in $(a)$ we use the Peter-Paul inequality on both the terms) to bound the term $(i)$ in the above inequality. We obtain,
    \begin{align*}   
    &\left\| \nabla_x f \left(\Delta x + x, \Delta y + y\right) \right\|^2 + \left\| \nabla_y f\left(\Delta x + x, \Delta y + y\right) \right\|^2 
    -\|\nabla_x f (x,y)\|^2 - \|\nabla_y f(x,y)\|^2\\
    &\quad\quad=\nabla_x f (x,y)^{\top}\Big((\eta^2 \nabla_{xx}^2 f^2 -2\eta \nabla_{xx}^2 f+2\eta^2 \nabla_{xy}^2 f \nabla_{yx}^2 f)\\ &\quad\quad\quad\quad\quad\quad+I(8\eta^2L_{xy}+4\eta^2L_{xy} (\|\nabla_x f (x,y)\| + \|\nabla_y f(x,y)\|))\Big)\nabla_x f (x,y)\\
    &\quad\quad\quad\quad+ \nabla_y f (x,y)^{\top}\Big((\eta^2 \nabla_{yy}^2 f^2 +2\eta \nabla_{yy}^2 f+2\eta^2 \nabla_{yx}^2 f \nabla_{xy}^2 f)\\
    &\quad\quad\quad\quad\quad\quad+I(8\eta^2L_{xy}+4\eta^2L_{xy} (\|\nabla_x f (x,y)\| + \|\nabla_y f(x,y)\|)) \Big)\nabla_y f(x,y)\\
    &\quad\quad\quad\quad+\eta^2\nabla_x f (x,y)^{\top}\left((\nabla_{xy}^2 f \nabla_{yy}^2 f-\nabla_{xx}^2 f\nabla_{xy}^2 f )(\nabla_{xy}^2 f \nabla_{yy}^2 f-\nabla_{xx}^2 f\nabla_{xy}^2 f)^{\top} \right) \nabla_x f (x,y)\\
    &\quad\quad\quad\quad+\eta^2\nabla_y f (x,y)^{\top}\left((\nabla_{xy}^2 f \nabla_{yy}^2 f-\nabla_{xx}^2 f\nabla_{xy}^2 f )^{\top}(\nabla_{xy}^2 f \nabla_{yy}^2 f-\nabla_{xx}^2 f\nabla_{xy}^2 f) \right) \nabla_y f(x,y)\\
    &\quad\quad\quad\quad+\eta^2/2(\|\nabla_x f (x,y)\|^2 + \|\nabla_y f(x,y)\|^2)
    \end{align*}   
    
    This gives,
    \begin{align*}   
    &\left\| \nabla_x f \left(\Delta x + x, \Delta y + y\right) \right\|^2 + \left\| \nabla_y f\left(\Delta x + x, \Delta y + y\right) \right\|^2\leq(1-\lambda_{min}) (\|\nabla_x f (x,y)\|^2 + \|\nabla_y f(x,y)\|^2)
    \end{align*}       
    Where,
    \begin{align*}
    \lambda_{min} &= \eta ~\min\Big\{\lambda_{min}(2 \nabla_{xx}^2 f- \eta(\nabla_{xx}^2 f^2 -2 \nabla_{xy}^2 f \nabla_{yx}^2 f-I(8L_{xy}+4L_{xy} (\|\nabla_x f \| + \|\nabla_y f\|)+\frac{1}{2})\\
    &\quad\quad\quad\quad-(\nabla_{xy}^2 f \nabla_{yy}^2 f-\nabla_{xx}^2 f\nabla_{xy}^2 f )(\nabla_{xy}^2 f \nabla_{yy}^2 f-\nabla_{xx}^2 f\nabla_{xy}^2 f)^{\top})),\\
    &\quad\quad\lambda_{min}(-2 \nabla_{yy}^2 f- \eta(\nabla_{yy}^2 f^2 -2\nabla_{yx}^2 f \nabla_{xy}^2 f-I(8L_{xy}+4L_{xy} (\|\nabla_x f \| + \|\nabla_y f\|)+\frac{1}{2})\\
    &\quad\quad\quad\quad-(\nabla_{xy}^2 f \nabla_{yy}^2 f-\nabla_{xx}^2 f\nabla_{xy}^2 f )^{\top}(\nabla_{xy}^2 f \nabla_{yy}^2 f-\nabla_{xx}^2 f\nabla_{xy}^2 f)))\Big\}    
    \end{align*}
    Hence for $1 \geq \lambda_{min} > 0$ we have exponentially fast convergence. For sufficiently small $\eta$, we have convergence for all strongly convex-concave functions with rate $1-\lambda_{min}$ where,
    $$\lambda_{min} = \eta (\min\{\lambda_{min}(2 \nabla_{xx}^2 f),\lambda_{min}(-2 \nabla_{yy}^2 f)\})$$
%

\section{Discrete time \cgo}\label{disctimecgo}
    In this section, we restate the update rule for the \cgo algorithm and then derive its convergence rate and a condition for convergence.
    Recall the update rule for \cgo,
    \begin{align*} 
    \begin{bmatrix}
    \Delta x\\
    \Delta y
    \end{bmatrix} &= -\eta \begin{bmatrix}
    I & \alpha \nabla_{xy}^2 f\\
    -\alpha \nabla_{yx}^2 f & I
    \end{bmatrix}^{-1}\begin{bmatrix}
    \nabla_x f \\
    -\nabla_y f 
    \end{bmatrix} 
    \end{align*}

    The following form of the above equation will be useful in the proof,
    \begin{align} \label{form1}
    \Delta x  = -\eta \nabla_x f -\alpha \nabla_{xy}^2 f\Delta y\nonumber\\
    \Delta y  = \eta \nabla_y f +\alpha \nabla_{yx}^2 f \Delta x 
    \end{align}

    Finally, writing the updates explicitly,

\begin{align}\label{form2}
&\Delta x = -\eta \left( I + \alpha^2 \nabla_{xy}^2 f \nabla_{yx}^2 f  \right)^{-1}  
            \left( \nabla_{x} f + \alpha \nabla_{xy}^2 f  \nabla_{y} f \right)\nonumber \\
&\Delta y = \eta \left( I + \alpha^2 \nabla_{yx}^2 f \nabla_{xy}^2 f \right)^{-1}  
            \left( \nabla_{y} f - \alpha \nabla_{yx}^2 f  \nabla_{x} f \right),
\end{align}

\begin{proof}[Proof of Theorem \eqref{disctheorem}]
Using the Taylor expansion of $(\nabla_x f,\nabla_y f)$, around the point $(x,y)$ we obtain, 
    
    \begin{align*}   
        & \nabla_x f (\Delta x + x, \Delta y + y) 
        = \nabla_x f ( x, y) + \nabla_{xx}^2 f \Delta x + \nabla_{xy}^2 f \Delta y 
        + \R_x(\Delta x,\Delta y)\\
        & \nabla_y f(\Delta x + x, \Delta y + y) = \nabla_y f( x,  y) + \nabla_{yy}^2 f\Delta y + \nabla_{yx}^2 f \Delta x + \R_y(\Delta x,\Delta y)
    \end{align*}   
    where the remainder terms $\R_x$ and $\R_y$ are defined as,
    \begin{align}
    &\R_x(\Delta x,\Delta y) \defeq \int \limits_0^1 \left( \left(\nabla_{xx}^2 f(t \Delta x + x, t \Delta y + y) - \nabla_{xx}^2 f\right) \Delta x + \left(\nabla_{xy}^2 f(t \Delta x + x, t \Delta y + y) - \nabla_{xy}^2 f\right) \Delta y \right) \mathrm{ d}t \label{cgoremaindersa}\\
    &\R_y(\Delta x,\Delta y) \defeq \int \limits_0^1 \left( \left(\nabla_{yy}^2 f(t \Delta x + x, t \Delta y + y) - \nabla_{yy}^2 f\right) \Delta y + \left(\nabla_{yx}^2 f(t \Delta x + x, t \Delta y + y) - \nabla_{yx}^2 f\right) \Delta x \right) \mathrm{d}t \label{cgoremaindersb} \end{align}   
    Using these equalities, we obtain the value of the difference between norm of the vector $(\nabla_x f,\nabla_y f)$ at points $(x,y)$ and updated ones, $(\Delta x+x_{k},\Delta y+y_{k})$.
    
    \begin{align}\label{maincomp}
    &\left\| \nabla_x f \left(\Delta x + x, \Delta y + y\right) \right\|^2 + \left\| \nabla_y f\left(\Delta x + x, \Delta y + y\right) \right\|^2 
    -\|\nabla_x f (x,y)\|^2 - \|\nabla_y f(x,y)\|^2 \nonumber\\
    &\quad\quad= 2 \Delta x^{\top} \nabla_{xx}^2 f \nabla_x f (x,y) +  2 \nabla_x f (x,y) ^{\top} \nabla_{xy}^2 f \Delta y + \Delta x^{\top} \nabla_{xx}^2 f \nabla_{xx}^2 f \Delta x + 2 \Delta x^{\top} \nabla_{xx}^2 f \nabla_{xy}^2 f \Delta y \nonumber\\
    &\quad\quad\quad\quad+ 2 \Delta y^{\top} \nabla_{yy}^2 f  \nabla_y f(x,y) +  2 \nabla_y f(x,y) ^{\top} \nabla_{yx}^2 f \Delta x + \Delta y^{\top} \nabla_{yy}^2 f \nabla_{yy}^2 f \Delta y + 2 \Delta y^{\top} \nabla_{yy}^2 f \nabla_{yx}^2 f \Delta x  \nonumber\\ 
    &\quad\quad\quad\quad+ \Delta y^{\top} \nabla_{yx}^2 f \nabla_{xy}^2 f \Delta y + 
\Delta x^{\top} \nabla_{xy}^2 f \nabla_{yx}^2 f \Delta x\nonumber\\
    &\quad\quad\quad\quad+ 2\nabla_x f (x,y) ^{\top} \R_x(\Delta x,\Delta y)  + 2\Delta x^{\top} \nabla_{xx}^2 f \R_x(\Delta x,\Delta y) + 2 \Delta y^{\top} \nabla_{yx}^2 f \R_x(\Delta x,\Delta y) + \|\R_x(\Delta x,\Delta y)\|^2\nonumber\\
    &\quad\quad\quad\quad+ 2\nabla_y f(x,y) ^{\top} \R_y(\Delta x,\Delta y)  + 2\Delta y^{\top} \nabla_{yy}^2 f \R_y(\Delta x,\Delta y) + 2 \Delta x^{\top} \nabla_{xy}^2 f \R_y(\Delta x,\Delta y) + \|\R_y(\Delta x,\Delta y)\|^2
    \end{align}
    We now observe using Eq.~\eqref{form1} that,
    
    \begin{align}  
    \Delta y^{\top} \nabla_{yx}^2 f \nabla_{xy}^2 f \Delta y &= -\Delta y^{\top} \nabla_{yx}^2 f \frac{\Delta x + \eta  \nabla_x f (x,y)}{\alpha}\label{sub1}\\ 
    \Delta x^{\top} \nabla_{xy}^2 f \nabla_{yx}^2 f \Delta x &= \Delta x^{\top} \nabla_{xy}^2 f \frac{\Delta y - \eta  \nabla_y f(x,y) }{\alpha}\label{sub2}
    \end{align}  
    Adding up Eq.~\eqref{sub1} and Eq.~\eqref{sub2} we obtain,
    \begin{equation*}
       \Delta x^{\top} \nabla_{xy}^2 f \nabla_{yx}^2 f \Delta x + \Delta y^{\top} \nabla_{yx}^2 f \nabla_{xy}^2 f \Delta y = - \frac{\eta}{\alpha} (\Delta y^{\top} \nabla_{yx}^2 f  \nabla_x f (x,y) + \Delta x^{\top} \nabla_{xy}^2 f  \nabla_y f(x,y))
    \end{equation*}
    Substituting this into Eq.~\eqref{maincomp} yields, 
    
    \begin{align*}  
    &\left\| \nabla_x f \left(\Delta x + x, \Delta y + y\right) \right\|^2 + \left\| \nabla_y f\left(\Delta x + x, \Delta y + y\right) \right\|^2
    -\|\nabla_x f (x,y)\|^2 - \|\nabla_y f(x,y)\|^2 \\
    &\quad\quad= 2 \Delta x^{\top} \nabla_{xx}^2 f \nabla_x f (x,y) +  (2-\frac{\eta}{\alpha})\nabla_x f (x,y) ^{\top} \nabla_{xy}^2 f \Delta y + \Delta x^{\top} \nabla_{xx}^2 f \nabla_{xx}^2 f \Delta x + 2 \Delta x^{\top} \nabla_{xx}^2 f \nabla_{xy}^2 f \Delta y   \\
    &\quad\quad\quad\quad+ 2 \Delta y^{\top} \nabla_{yy}^2 f  \nabla_y f(x,y) +  (2-\frac{\eta}{\alpha})\nabla_y f(x,y) ^{\top} \nabla_{yx}^2 f \Delta x + \Delta y^{\top} \nabla_{yy}^2 f \nabla_{yy}^2 f \Delta y + 2 \Delta y^{\top} \nabla_{yy}^2 f \nabla_{yx}^2 f \Delta x \\ 
    &\quad\quad\quad\quad+ 2\nabla_x f (x,y) ^{\top} \R_x(\Delta x,\Delta y)  + 2\Delta x^{\top} \nabla_{xx}^2 f \R_x(\Delta x,\Delta y) + 2 \Delta y^{\top} \nabla_{yx}^2 f \R_x(\Delta x,\Delta y) + \|\R_x(\Delta x,\Delta y)\|^2\\
    &\quad\quad\quad\quad+ 2\nabla_y f(x,y) ^{\top} \R_y(\Delta x,\Delta y)  + 2\Delta y^{\top} \nabla_{yy}^2 f \R_y(\Delta x,\Delta y) + 2 \Delta x^{\top} \nabla_{xy}^2 f \R_y(\Delta x,\Delta y) + \|\R_y(\Delta x,\Delta y)\|^2
    \end{align*}  
    We use the update rule of \cgo, Eq.~\eqref{form2} to substitute $\Delta x$ and $\Delta y$ and observe that $\nabla_{yx}^2 f(I + \nabla_{xy}^2 f\nabla_{yx}^2 f )^{-1} = (I + \nabla_{yx}^2 f \nabla_{xy}^2 f)^{-1}\nabla_{yx}^2 f$ as stated in Lemma~\eqref{lemma:inverse-transpose} to obtain the following equality,
    \begin{align*}   
         &\Delta x^{\top} \nabla_{xy}^2 f  \nabla_y f(x,y) + \nabla_x f (x,y) ^{\top} \nabla_{xy}^2 f \Delta y \\
         &\quad\quad=-\eta\alpha \nabla_x f (x,y) ^{\top} \left( I + \alpha^2 \nabla_{xy}^2 f\nabla_{yx}^2 f  \right)^{-1} \nabla_{xy}^2 f\nabla_{yx}^2 f   \nabla_x f (x,y)\\
         &\quad\quad\quad\quad- \eta \alpha \nabla_y f(x,y) ^{\top}\left(I + \alpha^2 \nabla_{yx}^2 f\nabla_{xy}^2 f\right)^{-1} \nabla_{yx}^2 f\nabla_{xy}^2 f  \nabla_y f(x,y).
    \end{align*}
    Yielding,
    \begin{align*}   
    &\left\| \nabla_x f \left(\Delta x + x, \Delta y + y\right) \right\|^2 + \left\| \nabla_y f\left(\Delta x + x, \Delta y + y\right) \right\|^2 
    -\|\nabla_x f (x,y)\|^2 - \|\nabla_y f(x,y)\|^2 \\
    &= 2 \Delta x^{\top} \nabla_{xx}^2 f \nabla_x f (x,y) -\eta \alpha(2-\frac{\eta}{\alpha})\nabla_x f (x,y) ^{\top} \left(I + \alpha^2 \nabla_{xy}^2 f \nabla_{yx}^2 f \right)^{-1} \nabla_{xy}^2 f \nabla_{yx}^2 f   \nabla_x f (x,y) \\
    &\quad\quad\quad\quad+ 2 \Delta x^{\top} \nabla_{xx}^2 f \nabla_{xy}^2 f \Delta y  + 2 \Delta y^{\top} \nabla_{yy}^2 f \nabla_{yx}^2 f \Delta x +  \Delta x^{\top} \nabla_{xx}^2 f \nabla_{xx}^2 f \Delta x + \Delta y^{\top} \nabla_{yy}^2 f \nabla_{yy}^2 f \Delta y  \\
    &\quad\quad\quad\quad+ 2 \Delta y^{\top} \nabla_{yy}^2 f  \nabla_y f(x,y)  -\eta \alpha(2-\frac{\eta}{\alpha})\nabla_y f(x,y) ^{\top} \left(I + \alpha^2 \nabla_{yx}^2 f\nabla_{xy}^2 f\right)^{-1} \nabla_{yx}^2 f\nabla_{xy}^2 f  \nabla_y f(x,y) \\ 
    &\quad\quad\quad\quad+ 2\nabla_x f (x,y) ^{\top} \R_x(\Delta x,\Delta y)  + 2\Delta x^{\top} \nabla_{xx}^2 f \R_x(\Delta x,\Delta y) + 2 \Delta y^{\top} \nabla_{yx}^2 f \R_x(\Delta x,\Delta y) + \|\R_x(\Delta x,\Delta y)\|^2\\
    &\quad\quad\quad\quad+ 2\nabla_y f(x,y) ^{\top} \R_y(\Delta x,\Delta y)  + 2\Delta y^{\top} \nabla_{yy}^2 f \R_y(\Delta x,\Delta y) + 2 \Delta x^{\top} \nabla_{xy}^2 f \R_y(\Delta x,\Delta y) + \|\R_y(\Delta x,\Delta y)\|^2.
    \end{align*}
    We now substitute $\Delta x$ and $\Delta y$ using Eq.~\eqref{form1} yielding,
    
    \begin{align}   
    &\left\| \nabla_x f \left(\Delta x + x, \Delta y + y \right)\right\|^2 + \left\|\nabla_y f\left(\Delta x + x, \Delta y + y \right)\right\|^2 
    -\|\nabla_x f (x,y)\|^2 - \|\nabla_y f(x,y)\|^2\nonumber \\
    &= - 2 \eta \nabla_x f (x,y) ^{\top} \nabla_{xx}^2 f \nabla_x f (x,y)+ 2 \eta \nabla_y f(x,y) ^{\top} \nabla_{yy}^2 f  \nabla_y f(x,y)\nonumber\\ 
    &\quad\quad-\eta \alpha(2-\frac{\eta}{\alpha})\nabla_x f (x,y) ^{\top} \left(I + \alpha^2 \nabla_{xy}^2 f \nabla_{yx}^2 f \right)^{-1} \nabla_{xy}^2 f \nabla_{yx}^2 f   \nabla_x f (x,y) \nonumber\\
    &\quad\quad+ \Delta x^{\top} \nabla_{xx}^2 f \nabla_{xx}^2 f \Delta x+\frac{2}{\eta} \left(\underbrace{(\alpha+\eta)\Delta x}_{{(i)}} + \alpha^2 \nabla_{xy}^2 f \Delta y \right)^{\top}  \nabla_{xx}^2 f \nabla_{xy}^2 f \Delta y\nonumber\\
    &\quad\quad+ \Delta y^{\top} \nabla_{yy}^2 f \nabla_{yy}^2 f \Delta y+\frac{2}{\eta} \left(\underbrace{(\alpha+\eta)\Delta y}_{{(ii)}} - \alpha^2 \nabla_{yx}^2 f \Delta x \right)^{\top} \nabla_{yy}^2 f \nabla_{yx}^2 f \Delta x   \nonumber\\
    &\quad\quad-\eta \alpha(2-\frac{\eta}{\alpha})\nabla_y f(x,y) ^{\top} \left(I + \alpha^2 \nabla_{yx}^2 f\nabla_{xy}^2 f\right)^{-1} \nabla_{yx}^2 f\nabla_{xy}^2 f  \nabla_y f(x,y)
     \nonumber\\ 
    &\quad\quad+ 2\nabla_x f (x,y) ^{\top} \R_x(\Delta x,\Delta y)  + \underbrace{2\Delta x^{\top} \nabla_{xx}^2 f \R_x(\Delta x,\Delta y)}_{{(iii)}} + 2 \Delta y^{\top} \nabla_{yx}^2 f \R_x(\Delta x,\Delta y) + \|\R_x(\Delta x,\Delta y)\|^2\nonumber\\
    &\quad\quad+ 2\nabla_y f(x,y) ^{\top} \R_y(\Delta x,\Delta y)  + \underbrace{2\Delta y^{\top} \nabla_{yy}^2 f \R_y(\Delta x,\Delta y)}_{{(iv)}} + 2 \Delta x^{\top} \nabla_{xy}^2 f \R_y(\Delta x,\Delta y) + \|\R_y(\Delta x,\Delta y)\|^2
    \end{align}

    Now we use Peter-Paul inequality, and bound the terms $(i)$ and $(ii)$ respectively as follows,
    \begin{align}\label{PP1}
    \frac{2(\alpha+\eta)}{\eta} \Delta x^{\top} \nabla_{xx}^2 f \nabla_{xy}^2 f \Delta y &\leq 8\frac{\alpha+\eta}{\eta}\| \Delta x^{\top}  \nabla_{xx}^2f\|^2+ \frac{\alpha+\eta}{8\eta}\|\nabla_{xy}^2 f \Delta y\|^2\nonumber\\     
    \frac{2(\alpha+\eta)}{\eta} \Delta y^{\top} \nabla_{yy}^2 f \nabla_{yx}^2 f \Delta x &\leq  8\frac{\alpha+\eta}{\eta}\| \Delta y^{\top}  \nabla_{yy}^2f\|^2+ \frac{\alpha+\eta}{8\eta}\|\nabla_{yx}^2 f \Delta x\|^2
    \end{align}
    and terms $(iii)$ and $(iv)$ as,
    
    \begin{align}\label{PP2}
    2 \Delta x^{\top} \nabla_{xy}^2 f \R_y(\Delta x,\Delta y)&\leq  \|\R_y(\Delta x,\Delta y)\|^2+ \|\nabla_{xy}^2 f \Delta y \|^2 \nonumber\\
    2 \Delta y^{\top} \nabla_{yx}^2 f \R_x(\Delta x,\Delta y)&\leq  \|\R_x(\Delta x,\Delta y)\|^2+ \|\nabla_{yx}^2 f \Delta x\|^2
    \end{align}
    Using the bounds obtained in Eqs.~\eqref{PP1} and \eqref{PP2} we get,  
    
    \begin{align*}   
    &\left\| \nabla_x f \left(\Delta x + x, \Delta y + y \right)\right\|^2 + \left\|\nabla_y f\left(\Delta x + x, \Delta y + y \right)\right\|^2 
    -\|\nabla_x f (x,y)\|^2 - \|\nabla_y f(x,y)\|^2 \\
    &\quad\quad\leq - 2 \eta \nabla_x f (x,y) ^{\top} \nabla_{xx}^2 f \nabla_x f (x,y)\\
    &\quad\quad\quad\quad-\eta \alpha(2-\frac{\eta}{\alpha})\nabla_x f (x,y) ^{\top} \left(I + \alpha^2 \nabla_{xy}^2 f \nabla_{yx}^2 f \right)^{-1} \nabla_{xy}^2 f \nabla_{yx}^2 f   \nabla_x f (x,y)  \\
    &\quad\quad\quad\quad+\frac{10\eta+8\alpha}{\eta} \Delta x^{\top} \nabla_{xx}^2 f \nabla_{xx}^2 f \Delta x 
    +\Delta y^{\top} \nabla_{yx}^2 f \left(\frac{\alpha+\eta}{8\eta} I + \frac{2\alpha^2 \nabla_{xx}^2 f}{\eta} \right) \nabla_{xy}^2 f \Delta y   \\
    &\quad\quad\quad\quad+ 2 \eta \nabla_y f(x,y) ^{\top} \nabla_{yy}^2 f  \nabla_y f(x,y)\\
    &\quad\quad\quad\quad-\eta \alpha(2-\frac{\eta}{\alpha})\nabla_y f(x,y) ^{\top} \left(I + \alpha^2 \nabla_{yx}^2 f\nabla_{xy}^2 f\right)^{-1} \nabla_{yx}^2 f\nabla_{xy}^2 f  \nabla_y f(x,y) \\
    &\quad\quad\quad\quad+ \frac{10\eta+8\alpha}{\eta} \Delta y^{\top} \nabla_{yy}^2 f \nabla_{yy}^2 f \Delta y 
    + \Delta x^{\top} \nabla_{xy}^2 f \left(\frac{\alpha+\eta}{8\eta} I - \frac{2\alpha^2 \nabla_{yy}^2 f}{\eta} \right) \nabla_{yx}^2 f \Delta x\\
    &\quad\quad\quad\quad+ 2\nabla_x f (x,y) ^{\top} \R_x(\Delta x,\Delta y) + \underbrace{2 \Delta y^{\top} \nabla_{yx}^2 f \R_x(\Delta x,\Delta y)}_{{(i)}} + 2\|\R_x(\Delta x,\Delta y)\|^2\\
    &\quad\quad\quad\quad+ 2\nabla_y f(x,y) ^{\top} \R_y(\Delta x,\Delta y) + \underbrace{2 \Delta x^{\top} \nabla_{xy}^2 f \R_y(\Delta x,\Delta y)}_{{(ii)}} + 2\|\R_y(\Delta x,\Delta y)\|^2.
    \end{align*}
    
    We use the Peter-Paul inequality to bound the term $(i)$ as,
    
    $$2 \Delta y^{\top} \nabla_{yx}^2 f \R_x(\Delta x,\Delta y)\leq 4 \|\R_x(\Delta x,\Delta y)\|^2+\frac{1}{4} \|\nabla_{yx}^2 f \Delta x\|^2 $$
    and the term $(ii)$ as, 
    
    $$2 \Delta x^{\top} \nabla_{xy}^2 f \R_y(\Delta x,\Delta y)\leq 4 \|\R_y(\Delta x,\Delta y)\|^2+\frac{1}{4} \|\nabla_{xy}^2 f \Delta y \|^2$$
    Substituting the above obtained bounds and noting that $\nabla_{xx}f$ and $\nabla_{yy}f$ are symmetric matrices we obtain,
    
     \begin{align*}   
    &\left\| \nabla_x f \left(\Delta x + x, \Delta y + y \right)\right\|^2 + \left\|\nabla_y f\left(\Delta x + x, \Delta y + y \right)\right\|^2 
    -\|\nabla_x f (x,y)\|^2 - \|\nabla_y f(x,y)\|^2 \\
    &\quad\quad\leq - 2 \eta \nabla_x f (x,y) ^{\top} \nabla_{xx}^2 f \nabla_x f (x,y) \\
    &\quad\quad\quad\quad-\eta \alpha(2-\frac{\eta}{\alpha})\nabla_x f (x,y) ^{\top} \left(I + \alpha^2 \nabla_{xy}^2 f \nabla_{yx}^2 f \right)^{-1} \nabla_{xy}^2 f \nabla_{yx}^2 f   \nabla_x f (x,y) \\
    &\quad\quad\quad\quad+\frac{10\eta+8\alpha}{\eta} \Delta x^{\top} \nabla_{xx}^2 f \nabla_{xx}^2 f \Delta x + \left(\frac{\alpha+3\eta}{8\eta} +  \frac{2\alpha^2\underline{\lambda_{xx}}}{\eta}\right) \|\nabla_{xy}^2 f \Delta y \|^2  \\
    &\quad\quad\quad\quad+ 2 \eta \nabla_y f(x,y) ^{\top} \nabla_{yy}^2 f  \nabla_y f(x,y)  \\
    &\quad\quad\quad\quad-\eta \alpha(2-\frac{\eta}{\alpha})\nabla_y f(x,y) ^{\top} \left(I + \alpha^2 \nabla_{yx}^2 f\nabla_{xy}^2 f\right)^{-1} \nabla_{yx}^2 f\nabla_{xy}^2 f  \nabla_y f(x,y) \\
    &\quad\quad\quad\quad+ \frac{10\eta+8\alpha}{\eta} \Delta y^{\top} \nabla_{yy}^2 f \nabla_{yy}^2 f \Delta y +  \left(\frac{\alpha+3\eta}{8\eta} - \frac{2\alpha^2\underline{\lambda_{yy}}}{\eta} \right) \|\nabla_{yx}^2 f \Delta x\|^2 \\ 
    &\quad\quad\quad\quad+ 2\nabla_x f (x,y) ^{\top} \R_x(\Delta x,\Delta y) + 6\|\R_x(\Delta x,\Delta y)\|^2\\
    &\quad\quad\quad\quad+ 2\nabla_y f(x,y) ^{\top} \R_y(\Delta x,\Delta y) + 6\|\R_y(\Delta x,\Delta y)\|^2
    \end{align*}
    Using Eq.~\eqref{form1} to substitute $\Delta x$ and $\Delta y$ we compute, 
    \begin{align*}   
        \left\|\nabla_{yx}^2 f \Delta x \right\|^2&= \eta^2\left(\nabla_x f (x,y) + \nabla_{xy}^2 f \nabla_y f(x,y)\right)^{\top} \\
        &\quad\quad\quad\quad\left( I + \nabla_{xy}^2 f\nabla_{yx}^2 f  \right)^{-2} \nabla_{xy}^2 f\nabla_{yx}^2 f  \left(\nabla_x f (x,y) + \nabla_{xy}^2 f \nabla_y f(x,y)\right)
    \end{align*}
    And,
    \begin{align*}
        \left\|\nabla_{xy}^2 f \Delta y \right\|^2 &= \eta^2\left(-\nabla_y f(x,y) + \nabla_{yx}^2 f \nabla_x f (x,y)\right)^{\top} \\
        &\quad\quad\quad\quad\left( I + \nabla_{yx}^2 f\nabla_{xy}^2 f  \right)^{-2} \nabla_{yx}^2 f\nabla_{xy}^2 f  \left(-\nabla_y f(x,y) + \nabla_{yx}^2 f \nabla_x f (x,y)\right)
    \end{align*}    
    By adding up the two, we obtain,
    
    \begin{align}   \label{crosssum}
        \left\|\nabla_{yx}^2 f \Delta x\right\|^2 + \left\|\nabla_{xy}^2 f \Delta y\right\|^2 &=
        \eta^2\nabla_x f (x,y) ^{\top} \left( I + \nabla_{xy}^2 f\nabla_{yx}^2 f  \right)^{-2} \left(\nabla_{xy}^2 f\nabla_{yx}^2 f  +\nabla_{xy}^2 f\nabla_{yx}^2 f \right) \nabla_x f (x,y)\nonumber\\
        &\quad\quad+ \eta^2\nabla_y f(x,y) ^{\top} \left( I + \nabla_{yx}^2 f\nabla_{xy}^2 f  \right)^{-2} \left(\nabla_{yx}^2 f\nabla_{xy}^2 f  +\nabla_{yx}^2 f\nabla_{xy}^2 f \right) \nabla_y f(x,y)\nonumber\\
        &= \eta^2 \nabla_x f (x,y) ^{\top} \left(I + \alpha^2 \nabla_{xy}^2 f \nabla_{yx}^2 f \right)^{-1} \nabla_{xy}^2 f \nabla_{yx}^2 f   \nabla_x f (x,y)\nonumber \\ 
        &\quad\quad+\eta^2 \nabla_y f(x,y) ^{\top} \left(I + \alpha^2 \nabla_{yx}^2 f\nabla_{xy}^2 f   \right)^{-1} \nabla_{yx}^2 f\nabla_{xy}^2 f  \nabla_y f(x,y)
    \end{align}
    Setting $\overline{\lambda_1} = \max(\overline{\lambda_{xx}},-\underline{\lambda_{yy}})$ and using Eq.~\eqref{crosssum} to substitute $\|\nabla_{xy}^2 f \Delta y\|^2+\|\nabla_{yx}^2 f\Delta x\|^2$, we have,
    
    \begin{align}\label{maincompute}
    &\left\| \nabla_x f \left(\Delta x + x, \Delta y + y \right)\right\|^2 + \left\|\nabla_y f\left(\Delta x + x, \Delta y + y \right)\right\|^2 
    -\|\nabla_x f (x,y)\|^2 - \|\nabla_y f(x,y)\|^2\nonumber \\
    &\quad\quad\leq - 2 \eta \nabla_x f (x,y) ^{\top} \nabla_{xx}^2 f \nabla_x f (x,y)  +\frac{10\eta+8\alpha}{\eta} \underbrace{\Delta x^{\top} \nabla_{xx}^2 f \nabla_{xx}^2 f \Delta x}_{{(i)}}\nonumber\\
    &\quad\quad\quad\quad+\eta \left(\frac{11\eta+16\alpha^2\overline{\lambda_1}}{8} - \frac{15}{8}\alpha\right)\nabla_x f (x,y) ^{\top} \left(I + \alpha^2 \nabla_{xy}^2 f \nabla_{yx}^2 f \right)^{-1} \nabla_{xy}^2 f \nabla_{yx}^2 f   \nabla_x f (x,y)\nonumber \\
    &\quad\quad\quad\quad+ 2 \eta \nabla_y f(x,y) ^{\top} \nabla_{yy}^2 f  \nabla_y f(x,y)  + \frac{10\eta+8\alpha}{\eta} \underbrace{\Delta y^{\top} \nabla_{yy}^2 f \nabla_{yy}^2 f \Delta y}_{{(ii)}}\nonumber\\
    &\quad\quad\quad\quad+\eta \left(\frac{11\eta+16\alpha^2\overline{\lambda_1}}{8} - \frac{15}{8}\alpha\right)\nabla_y f(x,y) ^{\top} \left(I + \alpha^2 \nabla_{yx}^2 f\nabla_{xy}^2 f\right)^{-1} \nabla_{yx}^2 f\nabla_{xy}^2 f  \nabla_y f(x,y)  \nonumber\\ 
    &\quad\quad\quad\quad+ 2\nabla_x f (x,y) ^{\top} \R_x(\Delta x,\Delta y) + 6\|\R_x(\Delta x,\Delta y)\|^2\nonumber\\
    &\quad\quad\quad\quad+ 2\nabla_y f(x,y) ^{\top} \R_y(\Delta x,\Delta y) + 6\|\R_y(\Delta x,\Delta y)\|^2
    \end{align}
    Substituting $\Delta x$ and $\Delta y$ from Eq.~\eqref{form1} we bound the sum of terms $(i)$ and $(ii)$ as follows,
    
    \begin{align*} 
    \Delta x^{\top} \nabla_{xx}^2 f \nabla_{xx}^2 f \Delta x&+\Delta y^{\top} \nabla_{yy}^2 f \nabla_{yy}^2 f \Delta y \\&= (-\alpha \nabla_{xy}^2 f\Delta y-\eta  \nabla_x f (x,y))^{\top}  \nabla_{xx}^2 f \nabla_{xx}^2 f  (-\alpha \nabla_{xy}^2 f\Delta y-\eta  \nabla_x f (x,y))\\
    &\quad\quad+(\alpha \nabla_{yx}^2 f\Delta x+\eta  \nabla_y f(x,y))^{\top}\nabla_{yy}^2 f \nabla_{yy}^2 f (\alpha \nabla_{yx}^2 f \Delta x+\eta  \nabla_y f(x,y)) \\
    &= \|\nabla_{xx}^2 f(\nabla_{xy}^2 f \Delta y+\eta  \nabla_x f (x,y))\|^2\\
    &\quad\quad+ \|\nabla_{yy}^2 f(\alpha \nabla_{yx}^2 f\Delta x+\eta  \nabla_y f(x,y))\|^2 \\
    &\leq 2\alpha^2 \|\nabla_{xy}^2 f \Delta y\|^2  \|\nabla_{xx}^2 f\|^2 + 2\alpha^2  \|\nabla_{yx}^2 f \Delta x\|^2\|\nabla_{yy}^2 f\|^2\\
    &\quad\quad+ 2\eta^2\|\nabla_{xx}^2 f\nabla_x f (x,y)\|^2+2\eta^2\|\nabla_{yy}^2 f\nabla_y f(x,y) \|^2\\
    &= 2\alpha^2 \|\nabla_{xy}^2 f \Delta y\|^2  \overline{\lambda_{xx}}^2 + 2\alpha^2  \|\nabla_{yx}^2 f \Delta x\|^2\overline{\lambda_{yy}}^2\\
    &\quad\quad+ 2\eta^2\|\nabla_{xx}^2 f\nabla_x f (x,y)\|^2+2\eta^2\|\nabla_{yy}^2 f\nabla_y f(x,y) \|^2\\
    \end{align*}
    Setting $\overline{\lambda_2} = \max(\overline{\lambda_{xx}},\overline{\lambda_{yy}})$ and using Eq.~\eqref{crosssum} to substitute $\|\nabla_{xy}^2 f \Delta y\|^2+\|\nabla_{yx}^2 f\Delta x\|^2$,
    \begin{align*}     
    \Delta x^{\top} \nabla_{xx}^2 f \nabla_{xx}^2 f \Delta x&+\Delta y^{\top} \nabla_{yy}^2 f \nabla_{yy}^2 f \Delta y\\
    &\leq 2\alpha^2 \overline{\lambda_2}^2(\|\nabla_{xy}^2 f \Delta y\|^2   + \|\nabla_{yx}^2 f \Delta x\|^2) \\
    &\quad\quad+ 2\eta^2\|\nabla_{xx}^2 f\nabla_x f (x,y)\|^2+2\eta^2\|\nabla_{yy}^2 f\nabla_y f(x,y) \|^2\\ 
    &\leq 2\alpha^2 \eta^2 \overline{\lambda_2}^2 \nabla_x f (x,y) ^{\top} \left(I + \alpha^2 \nabla_{xy}^2 f\nabla_{yx}^2 f \right)^{-1} \nabla_{xy}^2 f\nabla_{yx}^2 f \nabla_x f (x,y)\\  
    &\quad\quad+2\alpha^2 \eta^2 \overline{\lambda_2}^2 \nabla_y f(x,y) ^{\top} \left(I + \alpha^2 \nabla_{yx}^2 f\nabla_{xy}^2 f   \right)^{-1} \nabla_{yx}^2 f\nabla_{xy}^2 f  \nabla_y f(x,y)\\
    &\quad\quad+2\overline{\lambda_{xx}}^2  \nabla_x f (x,y) ^{\top} \nabla_x f (x,y) \\
    &\quad\quad+2\overline{\lambda_{yy}}^2  \nabla_y f(x,y) ^{\top} \nabla_y f(x,y) 
    \end{align*} 
    
    Substituting the above bound in Eq.~\eqref{maincompute} we obtain,
    \begin{align}\label{maincomputation}   
    &\left\| \nabla_x f \left(\Delta x + x, \Delta y + y \right)\right\|^2 + \left\|\nabla_y f\left(\Delta x + x, \Delta y + y \right)\right\|^2 
    -\|\nabla_x f (x,y)\|^2 - \|\nabla_y f(x,y)\|^2 \nonumber\\
    &\quad\quad\leq -  \eta \nabla_x f (x,y) ^{\top} \left(2\nabla_{xx}^2 f-2\frac{10\eta+8\alpha}{\eta}\overline{\lambda_{xx}}^2 \right) \nabla_x f (x,y)\nonumber\\
    &\quad\quad\quad\quad+ \eta \nabla_y f(x,y) ^{\top} \left(2\nabla_{yy}^2 f + 2\frac{10\eta+8\alpha}{\eta}\overline{\lambda_{yy}}^2\right) \nabla_y f(x,y)\nonumber\\
    &\quad\quad\quad\quad+ \left(\eta (\frac{11\eta+16\alpha^2\overline{\lambda_1}}{8} - \frac{15}{8}\alpha)+2(10\eta+8\alpha)\alpha^2\eta\overline{\lambda_2}^2\right)\nonumber\\
    &\quad\quad\quad\quad\quad\quad\quad\quad(\nabla_y f(x,y) ^{\top} \left(I + \alpha^2 \nabla_{yx}^2 f\nabla_{xy}^2 f\right)^{-1} \nabla_{yx}^2 f\nabla_{xy}^2 f  \nabla_y f(x,y)\nonumber\\
    &\quad\quad\quad\quad+\nabla_x f (x,y) ^{\top} \left(I + \alpha^2 \nabla_{xy}^2 f\nabla_{yx}^2 f \right)^{-1} \nabla_{xy}^2 f\nabla_{yx}^2 f   \nabla_x f (x,y))\nonumber\\ 
    &\quad\quad\quad\quad+ 2\nabla_x f (x,y) ^{\top} \R_x(\Delta x,\Delta y) + 6\|\R_x(\Delta x,\Delta y)\|^2\nonumber\\
    &\quad\quad\quad\quad+ 2\nabla_y f(x,y) ^{\top} \R_y(\Delta x,\Delta y) + 6\|\R_y(\Delta x,\Delta y)\|^2.
    \end{align}
    
    To conclude, we need to bound the $\R$-terms.
    Using the Lipschitz-continuity of the Hessian, and equations Eq.~\eqref{cgoremaindersa} and Eq.~\eqref{cgoremaindersb} we can bound,
    
    \begin{align}\label{remainderineq}
        \|\R_x(\Delta x,\Delta y)\|, \|\R_y(\Delta x,\Delta y)\| \leq L_{xy}(\|\Delta x\| +\|\Delta y\|)^2
    \end{align}

    Using Eq.~\eqref{form1} we get,
    
    \begin{align*} 
        (\|\Delta x\|^2 +\|\Delta y\|^2) =&  \eta^2(\| \nabla_x f (x,y)\|^2+\|\nabla_y f(x,y)\|^2)\\&+2\eta\alpha(\nabla_x f (x,y)^{\top}\nabla_{yx}^2 f\Delta x+ \nabla_y f(x,y)^{\top}\nabla_{xy}^2 f\Delta y) \\&+\alpha^2(\|\nabla_{yx}^2 f\Delta x\|^2+\|\nabla_{xy}^2 f\Delta y\|^2)
    \end{align*}
    
    From Eq.~\eqref{crosssum} we have,
    
    \begin{align}\label{cgoprelimbound}
        \alpha^2(\left\|\nabla_{yx}^2 f \Delta x\right\|^2 + \left\|\nabla_{xy}^2 f \Delta y\right\|^2) =& \eta^2 \nabla_x f (x,y) ^{\top} \left(I + \alpha^2 \nabla_{xy}^2 f \nabla_{yx}^2 f \right)^{-1} \alpha^2\nabla_{xy}^2 f \nabla_{yx}^2 f   \nabla_x f (x,y)\nonumber\\
        &\quad\quad+\eta^2 \nabla_y f(x,y) ^{\top} \left(I + \alpha^2 \nabla_{yx}^2 f\nabla_{xy}^2 f   \right)^{-1}\alpha^2 \nabla_{yx}^2 f\nabla_{xy}^2 f  \nabla_y f(x,y)\nonumber\\ 
        \leq& \eta^2(\|\nabla_x f (x,y)\|^2+\|\nabla_y f(x,y)\|^2)
        \end{align}

     And observe,
     
     \begin{align}\label{intermediaterem}
          \nabla_x f (x,y)^{\top}\nabla_{yx}^2 f\Delta x+\nabla_y f(x,y)^{\top}\nabla_{xy}^2 f\Delta y &= (\nabla_x f (x,y),\nabla_y f(x,y))^{\top}(\nabla_{yx}^2 f\Delta x,\nabla_{xy}^2 f\Delta y)\nonumber\\
          &\stackrel{(c)}{\leq} \|(\nabla_x f (x,y),\nabla_y f(x,y)\|\|(\nabla_{yx}^2 f\Delta x,\nabla_{xy}^2 f\Delta y)\|\nonumber\\
          &\stackrel{(d)}{\leq}\frac{\eta}{\alpha} (\|\nabla_x f (x,y)\|^2+\|\nabla_y f(x,y)\|^2)   
     \end{align}

    Where in $(c)$ we use the Cauchy-Schwarz inequality and in $(d)$ we use the bound derived in Eq.~\eqref{cgoprelimbound}. We then substitute $\Delta x$ and $\Delta y$ using Eq.~\eqref{form2} to obtain,
        \begin{align} \label{remainder} 
        L_{xy}(\|\Delta x\| +\|\Delta y\|)^2 &\leq 2L_{xy}(\|\Delta x\|^2 +\|\Delta y\|^2)\nonumber\\
        &\leq 2L_{xy}(\eta^2(\|\nabla_x f (x,y)\|^2+\|\nabla_y f(x,y)\|^2)\nonumber\\
        &\quad\quad+2\eta\alpha\underbrace{(\nabla_x f (x,y)^{\top}\nabla_{yx}^2 f\Delta x+\nabla_y f(x,y)^{\top}\nabla_{xy}^2 f\Delta y)}_{{(i)}}\nonumber\\
        &\quad\quad+\alpha^2\underbrace{(\|\nabla_{yx}^2 \Delta x\|^2+\|\nabla_{xy}^2 f\Delta y\|^2))}_{{(ii)}}\nonumber\\
        &\stackrel{(e)}{\leq}  8\eta^2 L_{xy} (\|\nabla_x f (x,y)\|^2 + \|\nabla_y f(x,y)\|^2)
    \end{align}
    
    Where in $(e)$ we have used Eq.~\eqref{intermediaterem} to bound term $(i)$ and Eq.~\eqref{cgoprelimbound} to bound $(ii)$. Combining Eq.~\eqref{remainderineq} and Eq.~\eqref{remainder} we obtain,

    \begin{align}\label{cgoremainderbounda}
        \|\R_x(\Delta x,\Delta y)\|, \|\R_y(\Delta x,\Delta y)\| \leq 8\eta^2 L_{xy} (\|\nabla_x f (x,y)\|^2 + \|\nabla_y f(x,y)\|^2)
    \end{align}    
    
    Also we have, 
    
        \begin{align}\label{cgoremainderboundb} 
        &2\nabla_x f (x,y) ^{\top} \R_x(\Delta x,\Delta y) + 2\nabla_y f(x,y) ^{\top} \R_y(\Delta x,\Delta y)\nonumber\\
        &\quad\quad\stackrel{(a)}{\leq}  2(\|\nabla_x f (x,y)\|\|\R_x(\Delta x,\Delta y) \|+\|\nabla_y f(x,y)\|\|\R_y(\Delta x,\Delta y) \|)\nonumber\\
        &\quad\quad\stackrel{(b)}{\leq}  16L_{xy}\eta^2(\|\nabla_x f (x,y)\|+\|\nabla_y f(x,y)\|)(\|\nabla_x f (x,y)\|^2 + \|\nabla_y f(x,y)\|^2)
    \end{align}    
    
    Where we use Cauchy-Schwarz inequality in $(a)$ and Eq.~\eqref{remainderineq} in $(b)$.
    Finally we use the bounds in Eq.~\eqref{cgoremainderbounda} and Eq.~\eqref{cgoremainderboundb} to bound the terms containing $\R_x(\Delta x,\Delta y)$ and $\R_y(\Delta x,\Delta y)$ in Eq.~\eqref{maincomputation} and further set $k = \eta (\frac{11\eta+16\alpha^2\overline{\lambda_1}}{8} - \frac{15}{8}\alpha)+2(10\eta+8\alpha)\alpha^2\eta\overline{\lambda_2}^2$ to obtain, 
    
\begin{align*}
    &\left\| \nabla_x f \left(\Delta x + x, \Delta y + y \right)\right\|^2 + \left\|\nabla_y f\left(\Delta x + x, \Delta y + y \right)\right\|^2 
    -\|\nabla_x f (x,y)\|^2 - \|\nabla_y f(x,y)\|^2 \\
    &\quad\quad\leq - \nabla_x f (x,y) ^{\top} ( \eta\left(2\nabla_{xx}^2 f-2\frac{10\eta+8\alpha}{\eta}\overline{\lambda_{xx}}^2 \right) + k \left(I + \alpha^2 \nabla_{xy}^2 f\nabla_{yx}^2 f \right)^{-1} \nabla_{xy}^2 f\nabla_{yx}^2 f  \\
    &\quad\quad\quad\quad- 16\eta^2 L_{xy}\left(\|\nabla_x f (x,y)\| + \|\nabla_y f(x,y)\|\right)\\
    &\quad\quad\quad\quad- 384 \eta^4 L_{xy}^2(\|\nabla_x f (x,y)\|^2+\|\nabla_x f (x,y)\|^2) ) \nabla_x f (x,y)    \\
    &\quad\quad\quad\quad- \nabla_y f(x,y) ^{\top} ( -\eta \left(2\nabla_{yy}^2 f+2\frac{10\eta+8\alpha}{\eta}\overline{\lambda_{yy}}^2 \right) + k \left(I + \alpha^2 \nabla_{yx}^2 f\nabla_{xy}^2 f\right)^{-1} \nabla_{yx}^2 f\nabla_{xy}^2 f \\
    &\quad\quad\quad\quad- 16\eta^2 L_{xy} \left(\|\nabla_x f (x,y)\| + \|\nabla_y f(x,y)\|\right)\\
    &\quad\quad\quad\quad- 384 \eta^4 L_{xy}^2(\|\nabla_x f (x,y)\|^2+\|\nabla_y f(x,y)\|^2))\nabla_y f(x,y)        
\end{align*}
    Rearranging we obtain, $$\|(\nabla_{x} f(x+x,y+y), \nabla_{y} f(x+x,y+y)\| \leq (1-\lambda_{\min}) \|(\nabla_{x} f(x,y), \nabla_{y} f(x, y)\|$$

    Thus for $1\geq\lambda_{min}>0$ where,
    \begin{align*} 
         \lambda_{min}=&\min\Big\{\lambda_{min}( \eta\left(2\nabla_{xx}^2 f-2\frac{10\eta+8\alpha}{\eta}\overline{\lambda_{xx}}^2 \right) + k \left(I + \alpha^2 \nabla_{xy}^2 f\nabla_{yx}^2 f \right)^{-1} \nabla_{xy}^2 f\nabla_{yx}^2 f  \\
    &\quad\quad\quad\quad\quad\quad - 16\eta^2 L\left(\|\nabla_x f (x,y)\| + \|\nabla_y f(x,y)\|\right) \\
    &\quad\quad\quad\quad\quad\quad- 384 \eta^4 L^2(\|\nabla_x f (x,y)\|^2+\|\nabla_x f (x,y)\|^2)),\\
    & \quad\quad\quad\lambda_{min}( -\eta \left(2\nabla_{yy}^2 f+2\frac{10\eta+8\alpha}{\eta}\overline{\lambda_{yy}}^2 \right) + k \left(I + \alpha^2 \nabla_{yx}^2 f\nabla_{xy}^2 f\right)^{-1} \nabla_{yx}^2 f\nabla_{xy}^2 f \\
    &\quad\quad\quad\quad\quad\quad- 16\eta^2 L \left(\|\nabla_x f (x,y)\| + \|\nabla_y f(x,y)\|\right)\\
    &\quad\quad\quad\quad\quad\quad- 384 \eta^4 L^2(\|\nabla_x f (x,y)\|^2+\|\nabla_y f(x,y)\|^2)\Big\}   
    \end{align*}
\end{proof}
we have exponential convergence with rate $(1-\lambda_{min})$.

Now, we simplify the above expression using Lemmas \eqref{lemma:eigen-properties} and \eqref{lemma:eigen-properties1} to obtain,
\begin{align*} 
 \lambda_{min}&\geq  \min\Big\{\eta(2\underline{\lambda_{xx}}-2\frac{10\eta+8\alpha}{\eta}\overline{\lambda_{xx}}^2)+k\frac{\underline{\lambda_{xy}}}{1+
\alpha^2\underline{\lambda_{xy}}
}- 16\eta^2 L \left(\|\nabla_x f (x,y)\| + \|\nabla_y f(x,y)\|\right)\\
&\quad\quad\quad\quad\quad\quad- 384 \eta^4 L^2(\|\nabla_x f (x,y)\|^2+\|\nabla_y f(x,y)\|^2),\\
&\quad\quad\quad\quad-\eta(2\underline{\lambda_{yy}}+2\frac{10\eta+8\alpha}{\eta}\overline{\lambda_{yy}}^2)+k\frac{\underline{\lambda_{yx}}}{1+
\alpha^2\underline{\lambda_{yx}}
}- 16\eta^2 L \left(\|\nabla_x f (x,y)\| + \|\nabla_y f(x,y)\|\right)\\
&\quad\quad\quad\quad\quad\quad- 384 \eta^4 L^2(\|\nabla_x f (x,y)\|^2+\|\nabla_y f(x,y)\|^2)\Big\}
    \end{align*}

When initializing close to the stationary point, the Lipschitz-continuity of the gradient guarantees that the terms $\left(\|\nabla_x f (x,y)\| + \|\nabla_y f(x,y)\|\right)$ and $\|\nabla_x f (x,y)\|^2+\|\nabla_y f(x,y)\|^2$ are small and we have,

\begin{align*} 
 \lambda_{min}&\geq  \min\Big\{\eta(2\underline{\lambda_{xx}}-2\frac{10\eta+8\alpha}{\eta}\overline{\lambda_{xx}}^2)+k\frac{\underline{\lambda_{xy}}}{1+
\alpha^2\underline{\lambda_{xy}}
},\\
&\quad\quad\quad\quad-\eta(2\underline{\lambda_{yy}}+2\frac{10\eta+8\alpha}{\eta}\overline{\lambda_{yy}}^2)+k\frac{\underline{\lambda_{yx}}}{1+
\alpha^2\underline{\lambda_{yx}}}\Big\}
\end{align*}
which is the statement of our Theorem.

\section{Convergence for $\alpha$-coherent functions}\label{strictalpha}

\subsection{\cgo converges to a saddle point under strictly $\alpha$-coherent functions}
\begin{proof}[Proof of Theorem \eqref{strictalphatheorem}]
We prove the convergence through contradiction. 
Let us assume that the algorithm does not converge to a saddle point. Let $z_n:=(x_n,y_n)$ denote the parameters at the $n$'th iterate of the algorithm. $g_{\alpha,n}:=g_{\alpha}(z_n)$ denote the vector $g_{\alpha}$ evaluated at $z_n$. Let the set of saddle points be $\mathcal{Z}^*$, and let all the iterates of the algorithm lie in a compact set $\mathcal{C}$. Then from the assumption we have $\mathcal{Z}^*\cap \mathcal{C} = \phi$. Now from the definition of strict coherence we have $\langle g_{\alpha,n},z-z^*\rangle \geq a$ for some $a>0$ and $z^*\in\mathcal{Z}^*$ and $,\forall z\in \mathcal{C}$. Such a $z^*$ is guaranteed by definition~\eqref{coherencedef}($2^{nd}$ point).

Recall the proximal map defined in Eq.~\eqref{eq:prox},
    \begin{align} \label{prox2}
           z^+ = P_z(y) = \argmin_{z' \in \mathcal{Z}} \{\langle y,z-z'\rangle+D(z',z)\} = \argmax_{z' \in \mathcal{Z}} \{\langle y+\nabla h(z),z'\rangle-h(z')\}.
        \end{align}

Then for Bregmann Divergence $D_h(x,y)$ with K-strongly convex potential function $h$ and 2-norm $\|.\|$ we have, \cite{mertikopoulos2018optimistic}(Proposition B.3), 
    \begin{align} \label{B3}
           D(p,z^+) \leq D(p,z)+\langle y,z-p\rangle+\frac{K}{2}\|\Delta y\|^2.
    \end{align}

To obtain the \cgo update we substitute $y = -\eta_n g_{\alpha, n},z=z_n,z^+=z_{n+1},p=z^*,h = \frac{\|.\|_2^2}{2}$ in Eq.~\eqref{B3} we get,    
\begin{align*} 
           D(z^*,z_{n+1}) = D(z^*,P_{z_n}(-\eta_ng_{\alpha, n}))\leq D(z^*,z_n)-\eta_{n}\langle g_{\alpha, n},z_n-z^*\rangle+\frac{\eta_n^2\|g_{\alpha, n}\|^2}{2}
\end{align*}
Since the saddle point is $\alpha$-coherent we have $\langle g_{\alpha, n},z-z^*\rangle \geq a$ for some $a>0$.
\begin{align*} 
           D(z^*,z_{n+1}) \leq D(z^*,z_n)-\eta_{n}a+\frac{\eta_n^2\|g_{\alpha, n}\|^2}{2} \leq D(z^*,z_0)- (a-\frac{\sum_{k=1}^n \|\eta_{k}\|^2}{2\sum_{k=1}^n\eta_k})\sum_{k=1}^n\eta_k
\end{align*}

Since we have $\sum_{k=1}^n \eta_k = \infty$ and $\sum_{k=1}^n\|\eta_k\|^2 < \infty$, we obtain $\lim_{n\rightarrow\infty}D_n = -\infty$, which is a contradiction since the divergence is positive. Hence \cgo converges to a saddle point.
\end{proof}
\subsection{\ocgo converges to a saddle point under $\alpha$-coherent functions}

\begin{proof}[Proof of Theorem \eqref{alphatheorem}]\label{lalpha}
Let $P_z(y)$ be as in Eq.~\eqref{eq:prox} and $z^+_1 = P_z(y_1),z^+_2 = P_z(y_2)$. We then have for Bregmann Divergence $D_h(x,y)$ with K-strongly convex potential function $h$, 2-norm $\|.\|$ and a fixed point p \cite{mertikopoulos2018optimistic}(Proposition B.4), 
\begin{align} \label{B4}
    D(p,x^+_2) \leq D(p,x)+\langle y_2,x^+_1-p\rangle + \frac{1}{2K}\|\Delta y_2-y_1\|^2-\frac{K}{2}
\|\Delta x_1^+-x\|^2    .
\end{align}
Let $p^*$ be a solution of the SP problem such that $\alpha$-MVI holds $\forall z\in \X\times\Y$, the existence of such a $p$ is guaranteed via the definition of $\alpha$-coherence Def.~\eqref{coherencedef}($2^{nd}$ point).

In order to obtain the $\ocgo$ update we substitute $y_1 = -\eta_n g_{\alpha,n},y_2 = -\eta_n g_{\alpha,n+\frac{1}{2}},x = z_n,x_1^+ = z_{n+\frac{1}{2}},x_2^+ = z_{n+1},p=p^*$ and set $h = \frac{\|.\|^2}{2}$ (for this $h$ we have $K=1$),
$$D(x^*,z_{n+1})\leq D(x^*,z_n)-\eta_n\langle g_{\alpha,n+\frac{1}{2}},z_{n+\frac{1}{2}}-x^*\rangle+\frac{\eta_n^2}{2}\|g_{\alpha,n+\frac{1}{2}}-g_{\alpha,n}\|^2-\frac{1}{2}\|z_{n+\frac{1}{2}}-z_n\|^2$$
From coherence condition we have,
\begin{align} \label{rel1}
    D(p^*,z_{n+1})\leq D(p^*,z_n)+\frac{\eta_n^2}{2}\|g_{\alpha,n+\frac{1}{2}}-g_{\alpha,n}\|^2-\frac{1}{2}\|z_{n+\frac{1}{2}}-z_n\|^2
\end{align}
Using Eq.~\eqref{form1} we get,
\begin{align*} \|g_{\alpha,n+\frac{1}{2}}-g_{\alpha,n}\|^2 &\leq \|g_{0,n+\frac{1}{2}}-g_{0,n}\|^2+\frac{\alpha^2}{\eta_n^2}(\| \nabla_{xy,n+\frac{1}{2}}f\Delta y_{n+\frac{1}{2}}-\nabla_{xy,n}f\Delta y_{n}\|^2\\
&\quad\quad+\|\nabla_{xy,n+\frac{1}{2}}f^{\top} \Delta x_{n+\frac{1}{2}}-\nabla_{xy,n}f^{\top} \Delta x_n\|^2)
\end{align*}
Where $(\Delta x_n,\Delta y_n) = -\eta_n g_{\alpha,n}$, $(\Delta x_{n+\frac{1}{2}},\Delta y_{n+\frac{1}{2}}) = -\eta_n g_{\alpha,n+\frac{1}{2}}$ and $\nabla_{xy,n}f,\nabla_{xy,n+\frac{1}{2}}f$ are the second order cross terms evaluated at $z_n,z_{n+\frac{1}{2}}$.
We can re-write the above as,
\begin{align*} 
\frac{1}{\eta_n^2}\|(\Delta x_{n+\frac{1}{2}}&-\Delta x_n,\Delta y_{n+\frac{1}{2}}-\Delta y_n)\|^2\\
&\leq  \|g_{0,n+\frac{1}{2}}-g_{0,n}\|^2\\
&\quad\quad+\frac{\alpha^2}{\eta_n^2}\| \nabla_{xy,n+\frac{1}{2}}f\Delta y_{n+\frac{1}{2}}-\nabla_{xy,n+\frac{1}{2}}f\Delta y_{n}+\nabla_{xy,n+\frac{1}{2}}f\Delta y_{n}-\nabla_{xy,n}f\Delta y_{n}\|^2\\
&\quad\quad+\frac{\alpha^2}{\eta_n^2}\|\nabla_{xy,n+\frac{1}{2}}f^{\top} \Delta x_{n+\frac{1}{2}}-\nabla_{xy,n+\frac{1}{2}}f^{\top} \Delta x_n+\nabla_{xy,n+\frac{1}{2}}f^{\top} \Delta x_n-\nabla_{xy,n}f^{\top} \Delta x_n\|^2\\
&\leq \|g_{0,n+\frac{1}{2}}-g_{0,n}\|^2\\
&\quad\quad+\frac{\alpha^2}{\eta_n^2}(\| \nabla_{xy,n+\frac{1}{2}}f\|^2\|\Delta y_{n+\frac{1}{2}}-\Delta y_{n}\|^2+\|\Delta y_{n}\|^2\|\nabla_{xy,n+\frac{1}{2}}f-\nabla_{xy,n}f\|^2)\\
&\quad\quad+\frac{\alpha^2}{\eta_n^2}( \|\nabla_{xy,n+\frac{1}{2}}f^{\top}\|^2\| \Delta x_{n+\frac{1}{2}}-\Delta x_n\|^2+\|\Delta x_n\|^2\|\nabla_{xy,n+\frac{1}{2}}f^{\top}-\nabla_{xy,n}f^{\top} \|^2)
\end{align*}

Using the Lipschitz continuity of the Hessian terms, setting $\alpha^2\| \nabla_{xy,n+\frac{1}{2}}f\|^2 = \alpha^2\| \nabla_{xy,n+\frac{1}{2}}f^{\top}\|^2 =\alpha^2 L_{xy}^2\leq 1$, and rearranging we get,

\begin{align*} 
\frac{1}{\eta_n^2}\|\Delta x_{n+\frac{1}{2}}-\Delta x_n,\Delta y_{n+\frac{1}{2}}-\Delta y_n\|^2 &\leq  \frac{1}{ 1-\| \nabla_{xy,n+\frac{1}{2}}f\|^2\alpha^2}\|g_{0,n+\frac{1}{2}}-g_{0,n}\|^2\\
&\quad\quad+\frac{\alpha^2}{ \eta_n^2(1-\| \nabla_{xy,n+\frac{1}{2}}f\|^2\alpha^2)}(\|\Delta y_{n}\|^2\|\nabla_{xy,n+\frac{1}{2}}f-\nabla_{xy,n}f\|^2)\\
&\quad\quad+\frac{\alpha^2}{ \eta_n^2(1-\| \nabla_{xy,n+\frac{1}{2}}f\|^2\alpha^2)}(\|\Delta x_n\|^2\|\nabla_{xy,n+\frac{1}{2}}f^{\top}-\nabla_{xy,n}f^{\top} \|^2)\\ 
&\leq  \frac{L^2}{ \eta_n^2(1-\| \nabla_{xy,n+\frac{1}{2}}f\|^2\alpha^2)}\|z_{n+\frac{1}{2}}-z_n\|^2\\&\quad\quad+\frac{L_{xy}^2\alpha^2}{ \eta_n^2(1-\| \nabla_{xy,n+\frac{1}{2}}f\|^2\alpha^2)}(\|\Delta x_{n}\|^2+\|\Delta y_{n}\|^2)\|z_{n+\frac{1}{2}}-z_n\|^2
\end{align*}

Finally we have,

\begin{align} 
\|g_{\alpha,n+\frac{1}{2}}-g_{\alpha,n}\|^2\leq & \frac{L^2+L_{xy}^2\alpha^2(\|\Delta x_{n}\|^2+\|\Delta y_{n}\|^2)}{ \eta_n^2(1-\| \nabla_{xy,n+\frac{1}{2}}f\|^2\alpha^2)}\|z_{n+\frac{1}{2}}-z_n\|^2\nonumber\\ 
\leq & \frac{L^2+L_{xy}^2\alpha^2(\alpha+\eta)^2\|g_{0,n}\|^2}{ \eta_n^2(1-\| \nabla_{xy,n+\frac{1}{2}}f\|^2\alpha^2)}\|z_{n+\frac{1}{2}}-z_n\|^2
\end{align}

Substituting in Eq.~\eqref{rel1} we get,

\begin{align} \label{rel2}
    D(p^*,z_{n+1})\leq &D(p^*,z_n)+\|z_{n+\frac{1}{2}}-z_n\|^2 (\frac{\eta_n^2L'^2+L_{xy}^2\alpha^2(\alpha+\eta_n)^2\|g_{0,n}\|^2}{ 2(1-\| \nabla_{xy,n+\frac{1}{2}}f\|^2\alpha^2)}-\frac{1}{2})\nonumber\\ \leq &D(p^*,z_n)+\|z_{n+\frac{1}{2}}-z_n\|^2 ( \frac{\eta_n^2L'^2+L_{xy}^2\alpha^2(\alpha+\eta_n)^2L^2}{ 2(1-\| \nabla_{xy,n+\frac{1}{2}}f\|^2\alpha^2)}-\frac{1}{2})
\end{align}

Hence if $\alpha$ satisfies the following,

\begin{align*}
    \alpha^4L_{xy}^2L^2+\alpha^2L'^2-1 < 0 
\end{align*}
or equivalently we have, 
\begin{align} \label{alpha}
    -\sqrt{\frac{\sqrt{L'^4+4L_{xy}^2L^2}-L'^2}{2L_{xy}^2L^2}}<\alpha < \sqrt{\frac{\sqrt{L'^4+4L_{xy}^2L^2}-L'^2}{2L_{xy}^2L^2}}
\end{align}

and also $\eta_n$ satisfying the following, 

\begin{align} \label{eta}
    0< \eta_n < \frac{\sqrt{\alpha^2 L^2 L_{xy}^2 + L'^2-2\alpha^4L^2 L'^2 L_{xy}^2 - \alpha^2 L'^4 } - \alpha^3 L_{0}^2 L_{xy}^2}{\alpha^2 L^2 L_{xy}^2 + L'^2}
\end{align}

We have,

$$\frac{\eta_n^2L'^2+L_{xy}^2\alpha^2(\alpha+\eta_n)^2L^2}{ 2(1-\| \nabla_{xy,n+\frac{1}{2}}f\|^2\alpha^2)}-\frac{1}{2} < 0$$

and the divergence decreases at each step. 
By telescoping Eq.~\eqref{rel2} we obtain,

\begin{align} \label{rel3}
    \sum_{k=1}^n\|z_{k+\frac{1}{2}}-z_k\|^2 ( 1-\frac{\eta_k^2L'^2+L_{xy}^2\alpha^2(\alpha+\eta_k)^2L^2}{(1-\| \nabla_{xy,k+\frac{1}{2}}\|^2\alpha^2)})\leq &2D(x^*,z_1).
\end{align}

We also know $z_{k+\frac{1}{2}}-z_k = -\eta_k g_{\alpha,k}$, thus for $\alpha$ and $\eta_n$ satisfying Eq.~\eqref{alpha} and Eq.~\eqref{eta}, we have,

\begin{align} \label{rel4}
    \frac{1}{n}\sum_{k=1}^n\|z_{k+\frac{1}{2}}-z_k\|^2 = \frac{1}{n}\sum_{k=1}^n\eta_k^2 \|g_{\alpha,k}\|^2 \leq & \frac{2}{nc}D(x^*,z_1).
\end{align}

Where $ 1-\frac{\eta_k^2L'^2+L_{xy}^2\alpha^2(\alpha+\eta_k)^2L^2}{(1-\| \nabla_{xy,k+\frac{1}{2}}\|^2\alpha^2)} > c,\forall \eta_k$. 
If we assume without loss of generality that $\eta_k$ converges to $\eta$, then we have from Eq.~\eqref{rel4} that the average of $\|g_{\alpha,n}\|$ and $\|z_{n+\frac{1}{2}}-z_n\|$ falls with order $O(\frac{1}{n})$ where $n$ is the iteration count.\\ 
Taking limit of $z_{k+\frac{1}{2}}$ we have,
$$z^* =\lim_{k\rightarrow \infty} z_{k+\frac{1}{2}} = P_{z^*}(-\eta g_{\alpha}(z^*)),$$
this implies $z^*$ satisfies $\alpha$-SVI and is hence a solution of the SP problem via definition of $\alpha$-coherence Def.~\eqref{coherencedef} ($1^{st}$ point ). 

Coherence condition Def.~\eqref{coherencedef}($3^{rd}$ point) implies that $\alpha$-MVI holds locally around $z^*$. 
Thus for, $\alpha$ and $\eta_n$ satisfying Eq.~\eqref{alpha} and Eq.~\eqref{eta} respectively, and sufficiently large $n$, we have,

\begin{align*}
    D(z^*,z_{n+1})\leq  &D(z^*,z_n)+\|z^*-z_n\|^2 ( \frac{\eta_n^2L'^2+L_{xy}^2\alpha^2(\alpha+\eta_n)^2L^2}{ 2(1-\| \nabla_{xy,n+\frac{1}{2}}f\|^2\alpha^2)}-\frac{1}{2}) \stackrel{(a)}{\leq} D(z^*,z_n)
\end{align*}
Where the equality in $(a)$ holds if and only if $z^*=z_n$.  Thus $D(z^*,z_n)$ is non-increasing and $z_n \rightarrow z^*$ which is a saddle point.
\end{proof}
\section{Additional simulations}
We now present some more simulations of \cgo and \ocgo on the function $x^\top A y$ with multiple samples of the matrix $A=(a_{ij}),a_{ij}\sim \N (0,1)$.
\begin{figure}[H]
\centering
\subcaptionbox{\cgo \label{a}}
[.24 \textwidth]{\includegraphics[width=\linewidth]{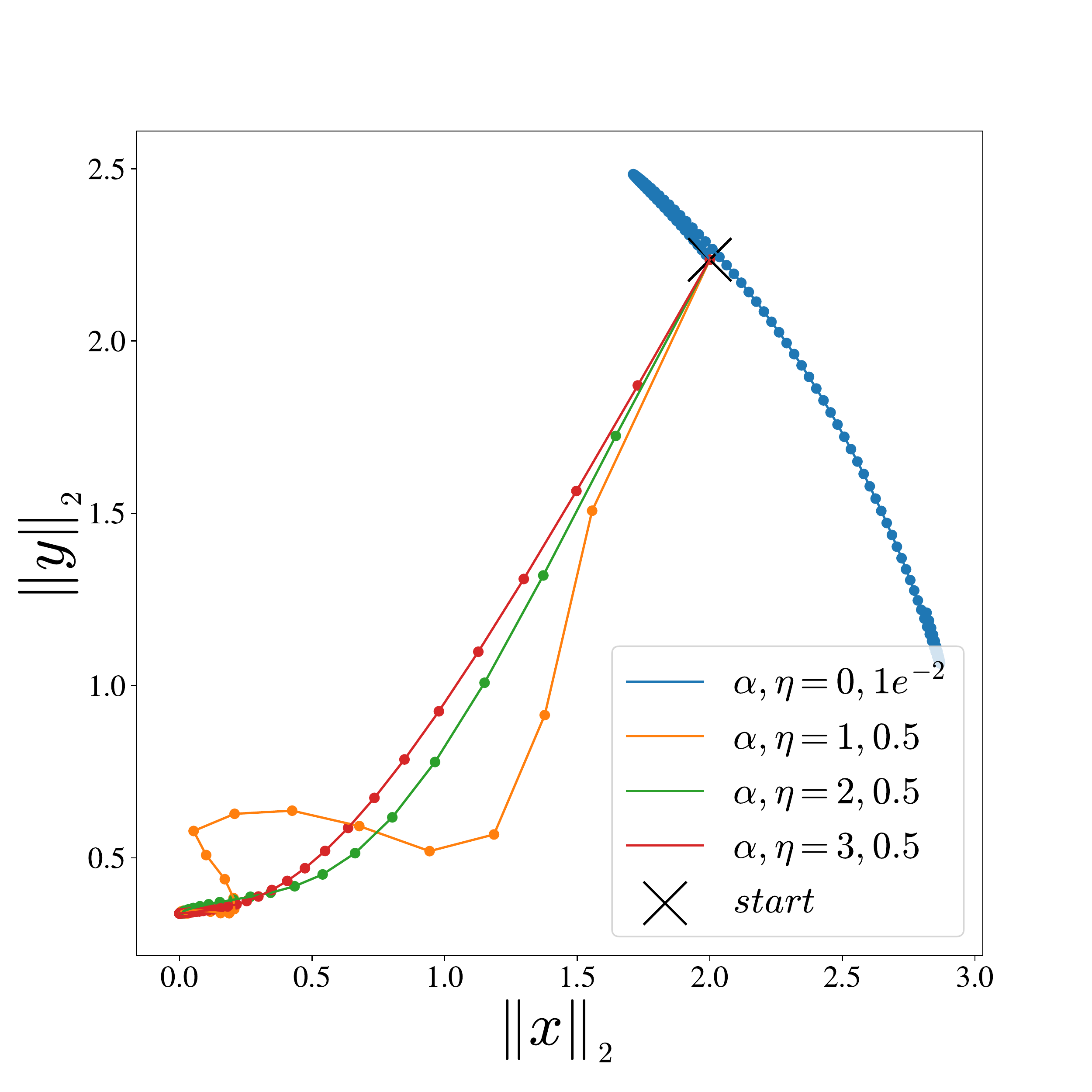}}\subcaptionbox{\ocgo\label{b}}
[.24 \textwidth]{\includegraphics[width=\linewidth]{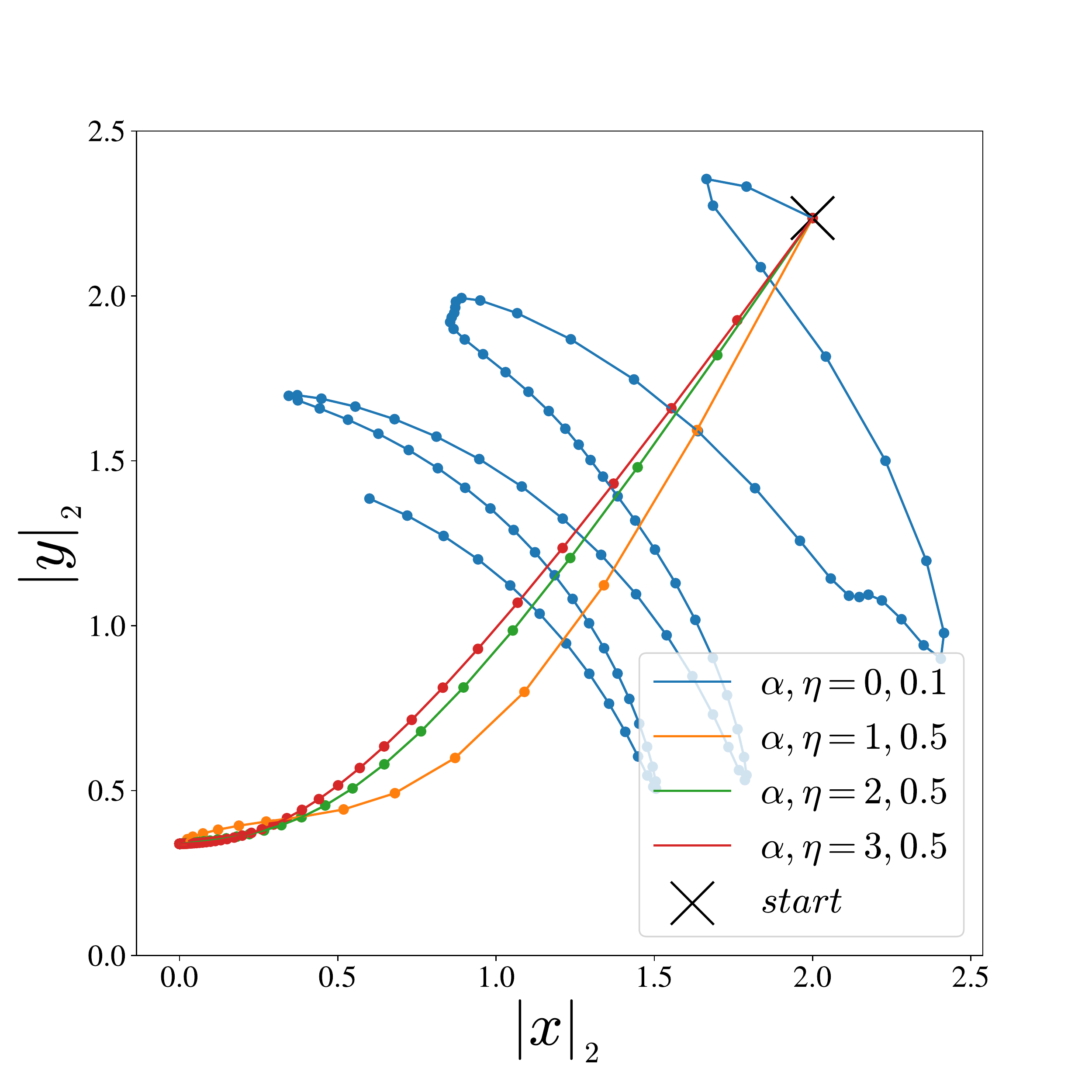}}\subcaptionbox{\cgo\label{c}}
[.24 \textwidth]{\includegraphics[width=1\linewidth]{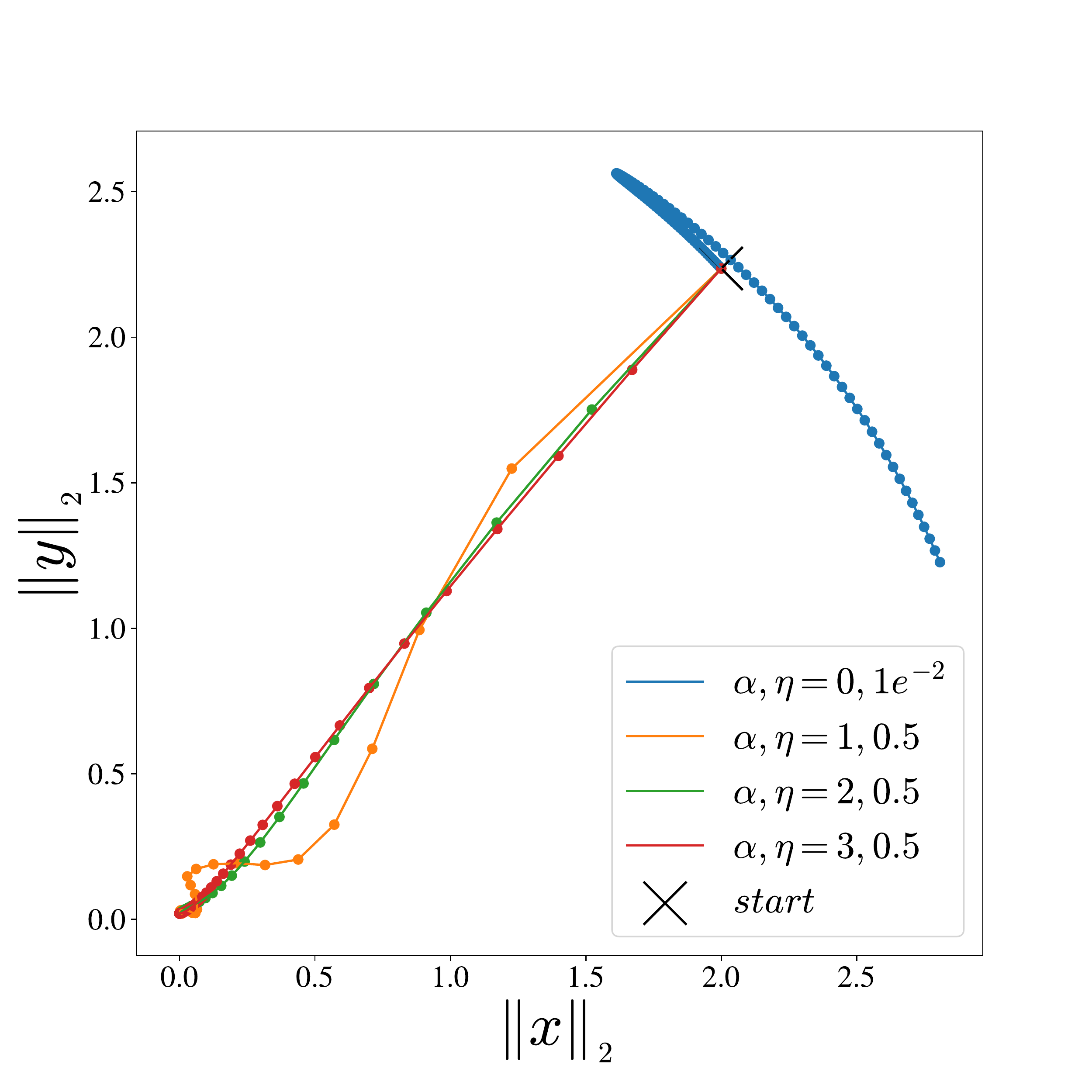}}\subcaptionbox{\ocgo\label{d}}
[.24 \textwidth]{\includegraphics[width=1\linewidth]{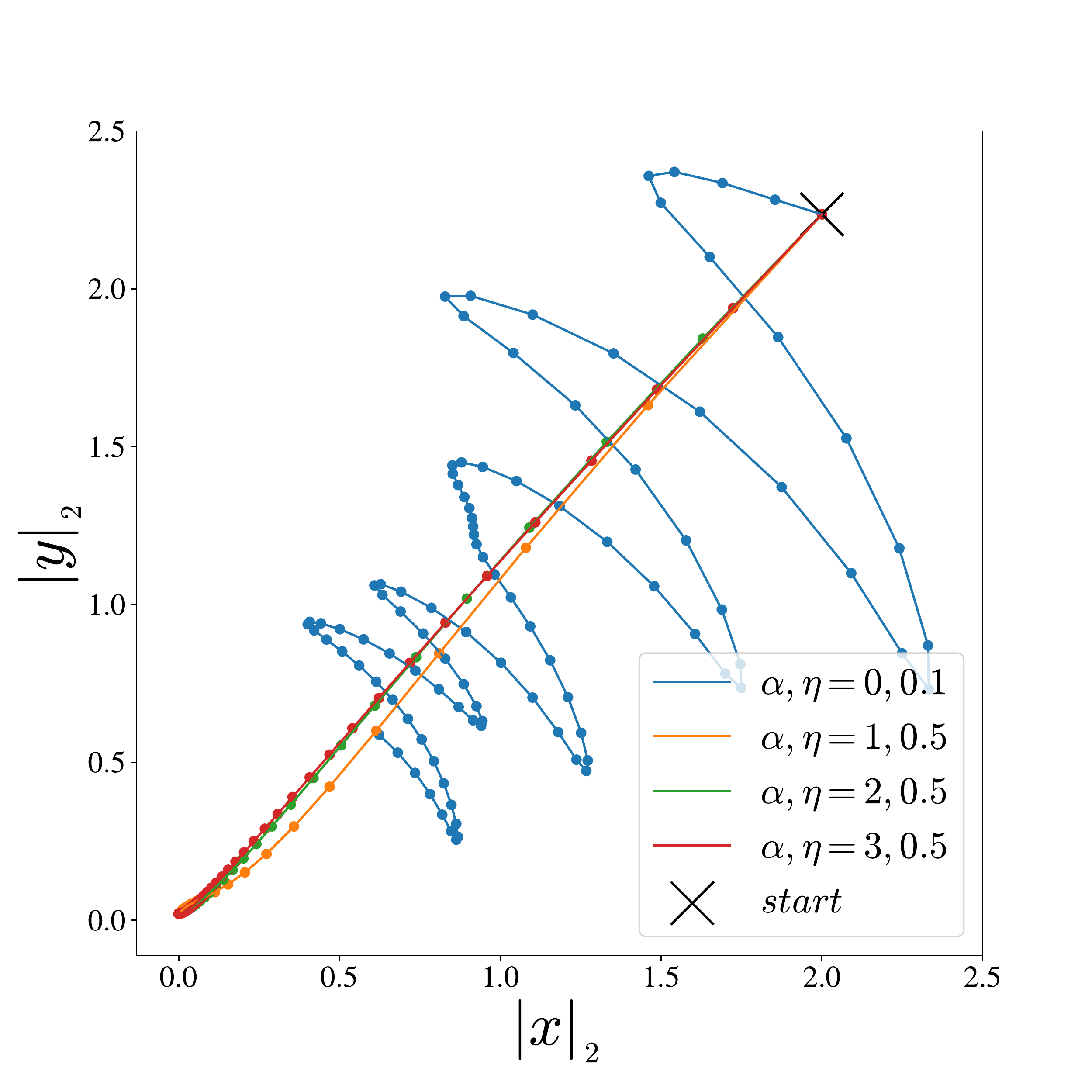}}
\subcaptionbox{\cgo \label{e}}
[.24 \textwidth]{\includegraphics[width=\linewidth]{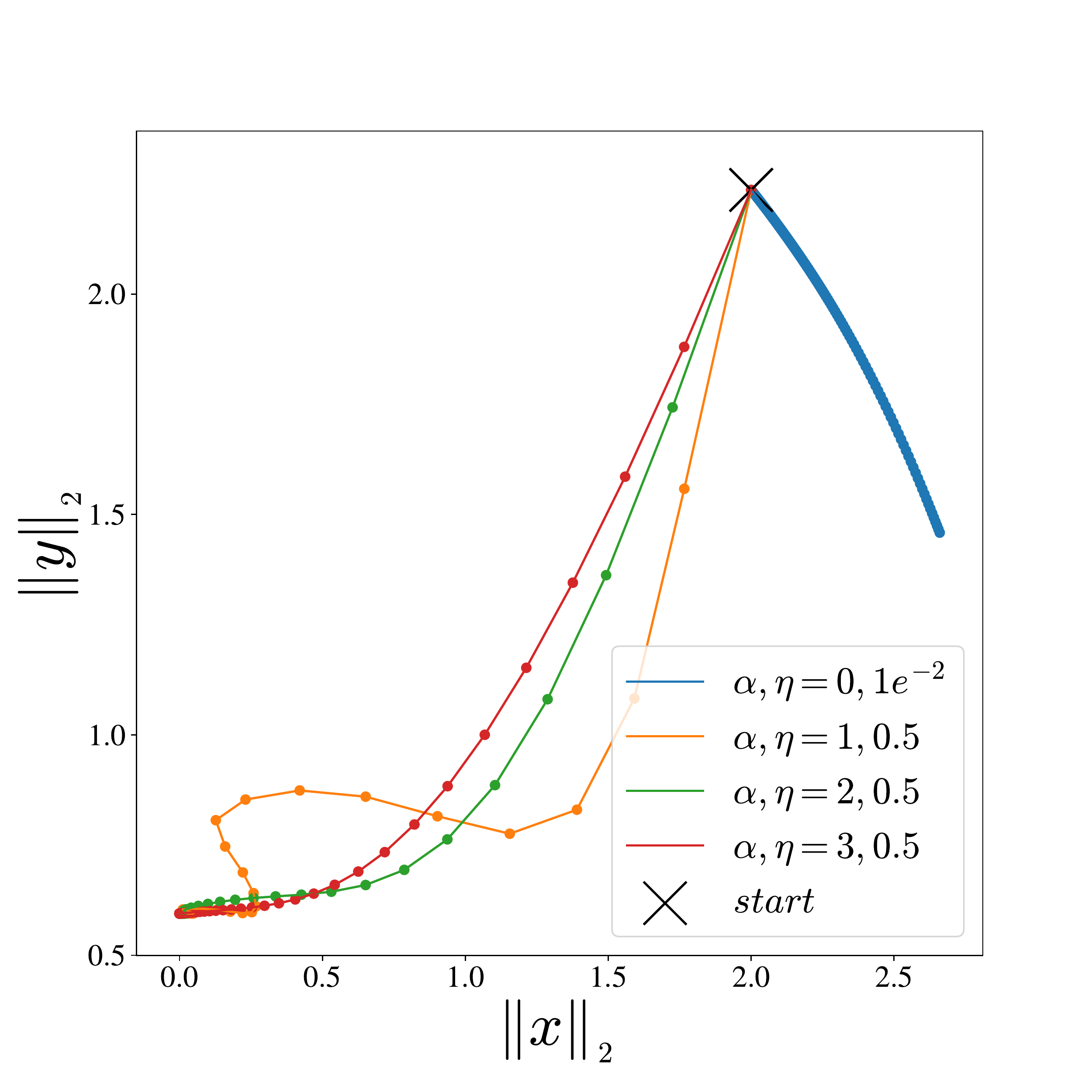}}\subcaptionbox{\ocgo\label{f}}
[.24 \textwidth]{\includegraphics[width=\linewidth]{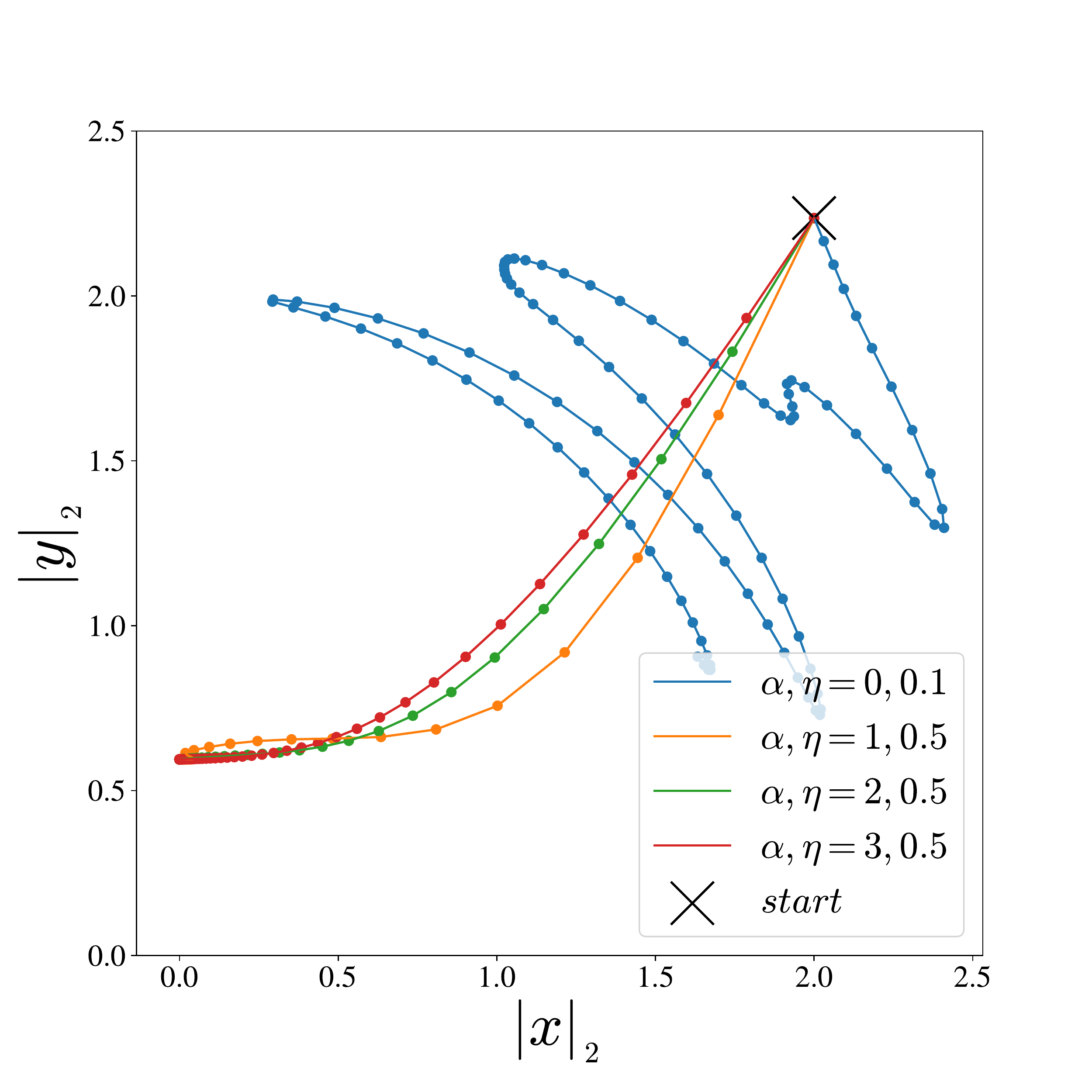}}\subcaptionbox{\cgo\label{g}}
[.24 \textwidth]{\includegraphics[width=1\linewidth]{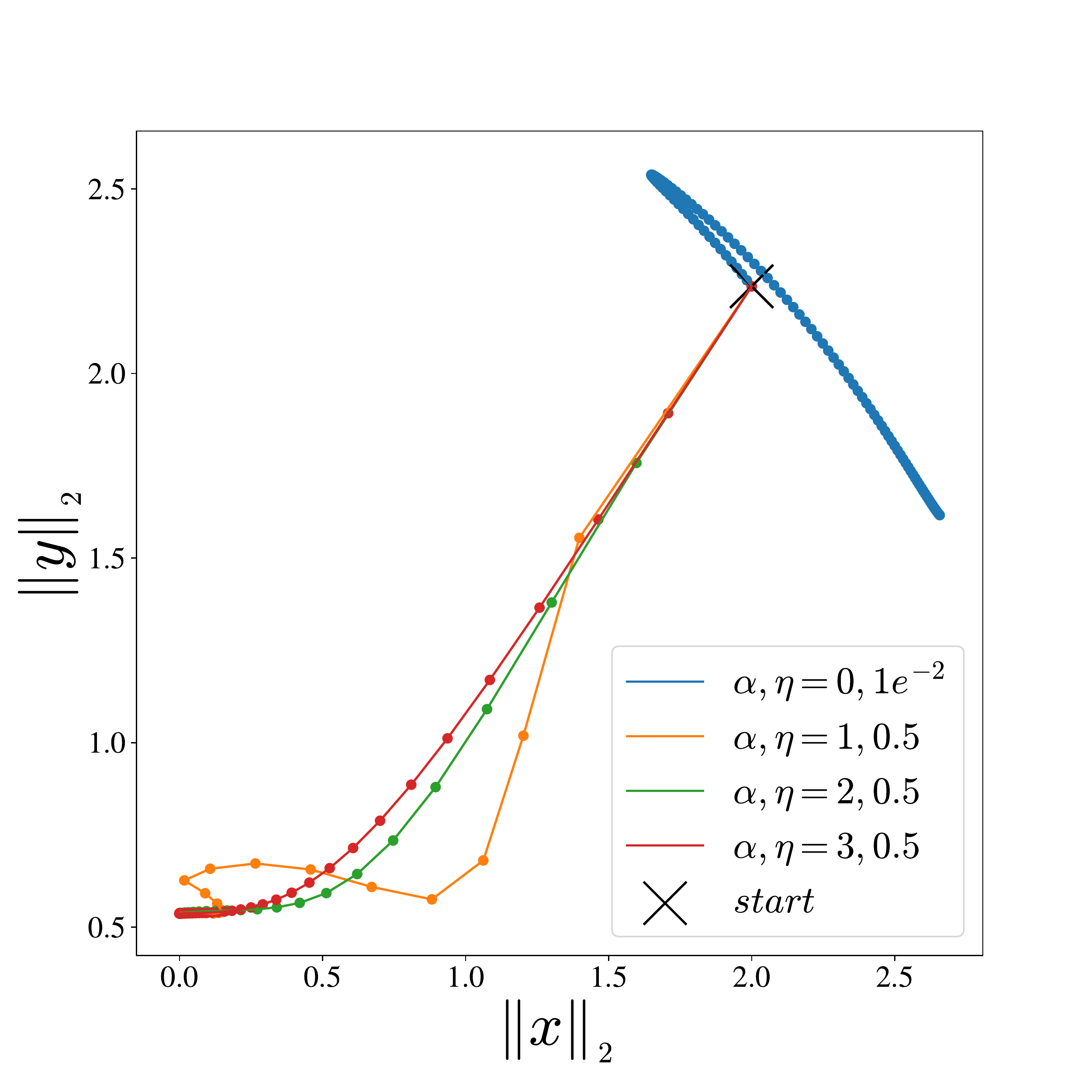}}\subcaptionbox{\ocgo\label{h}}
[.24 \textwidth]{\includegraphics[width=1\linewidth]{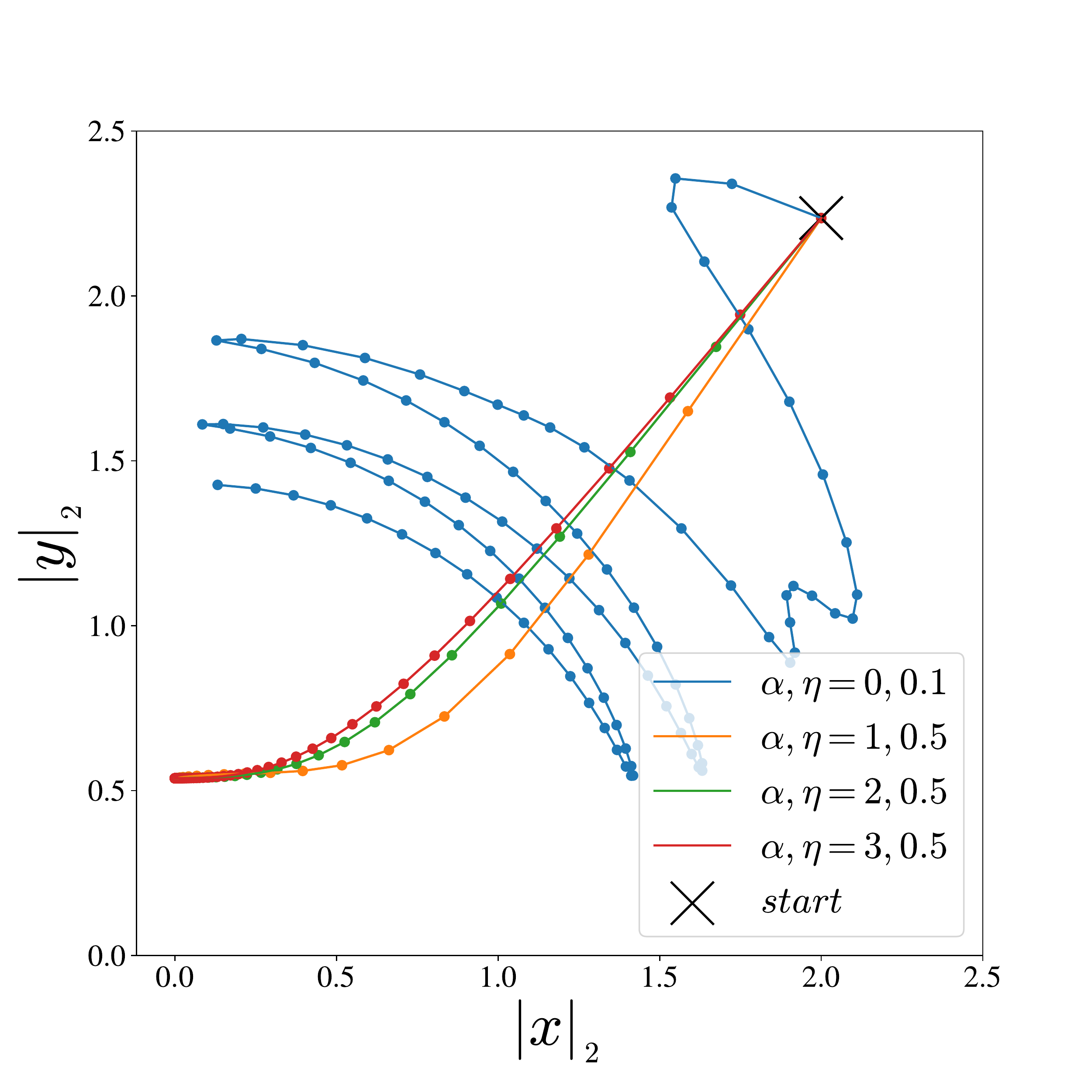}}
\subcaptionbox{\cgo \label{i}}
[.24 \textwidth]{\includegraphics[width=\linewidth]{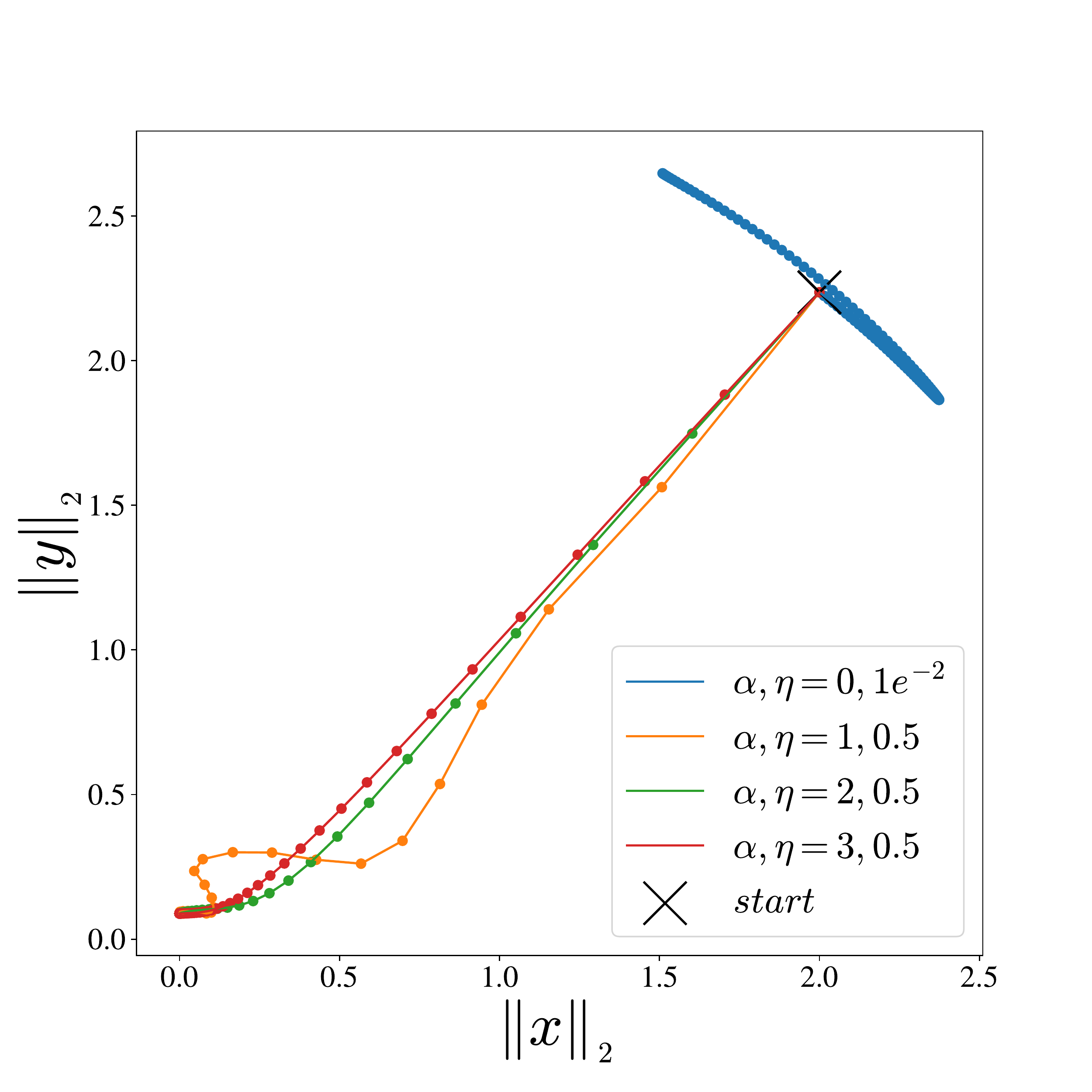}}\subcaptionbox{\ocgo\label{j}}
[.24 \textwidth]{\includegraphics[width=\linewidth]{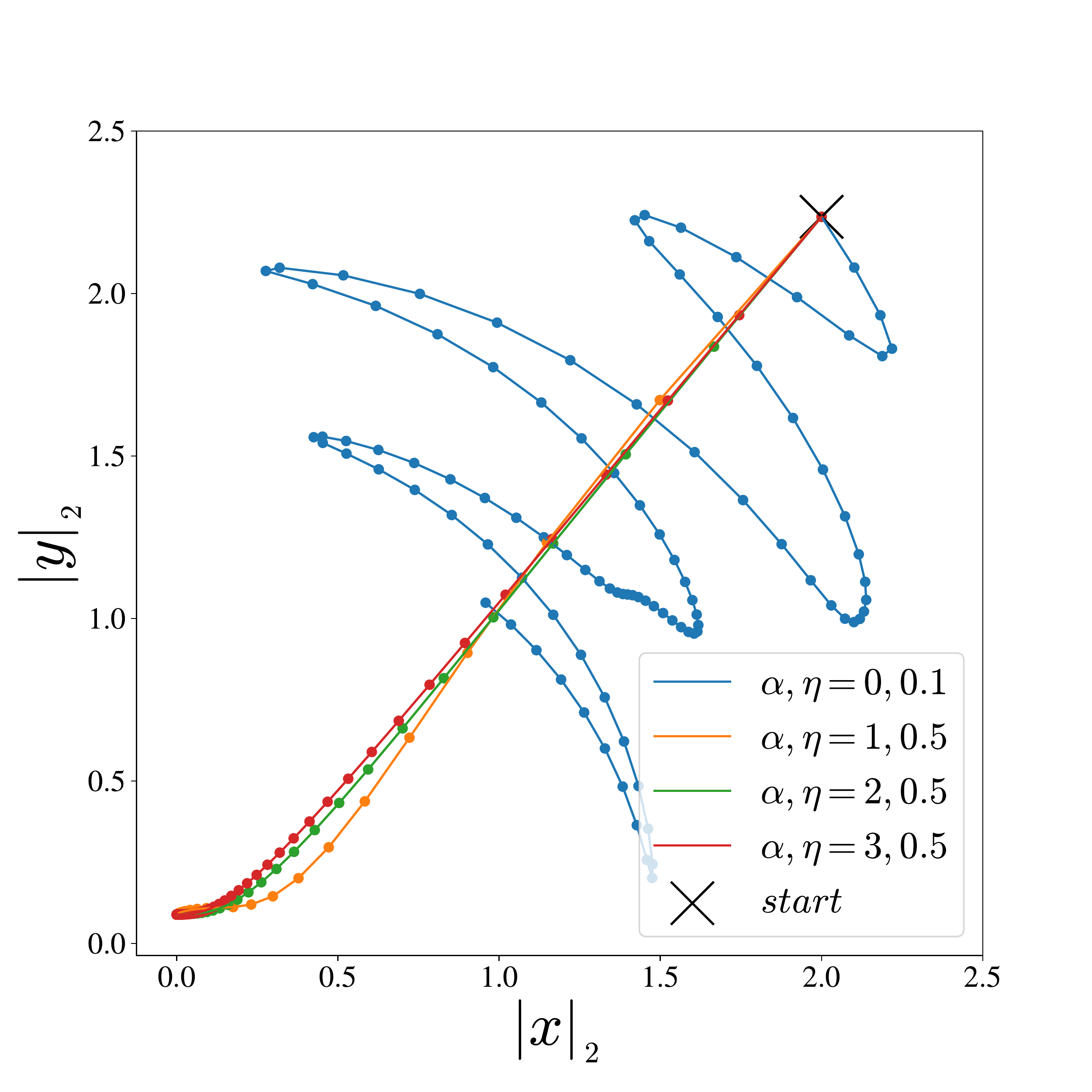}}\subcaptionbox{\cgo\label{k}}
[.24 \textwidth]{\includegraphics[width=1\linewidth]{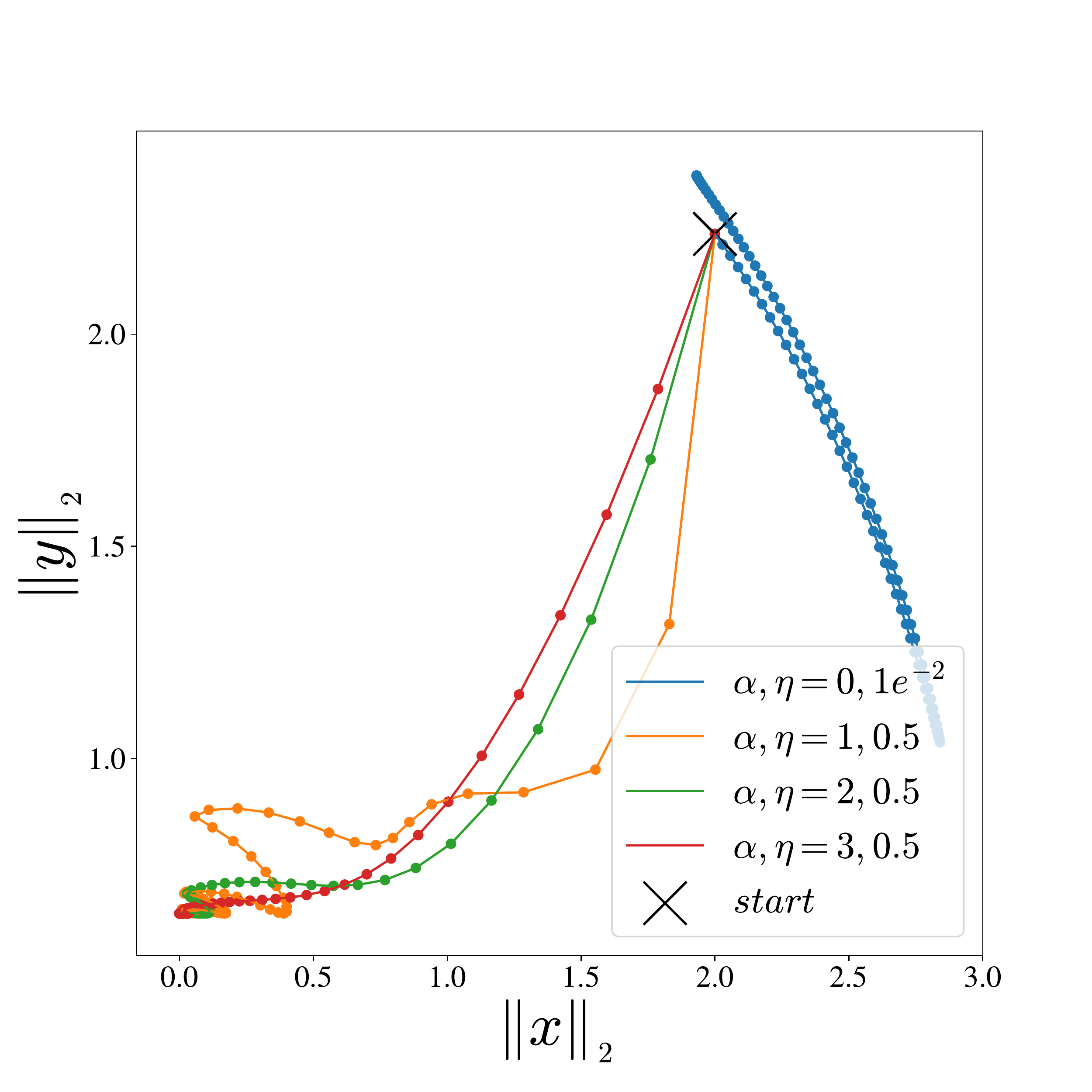}}\subcaptionbox{\ocgo\label{l}}
[.24 \textwidth]{\includegraphics[width=1\linewidth]{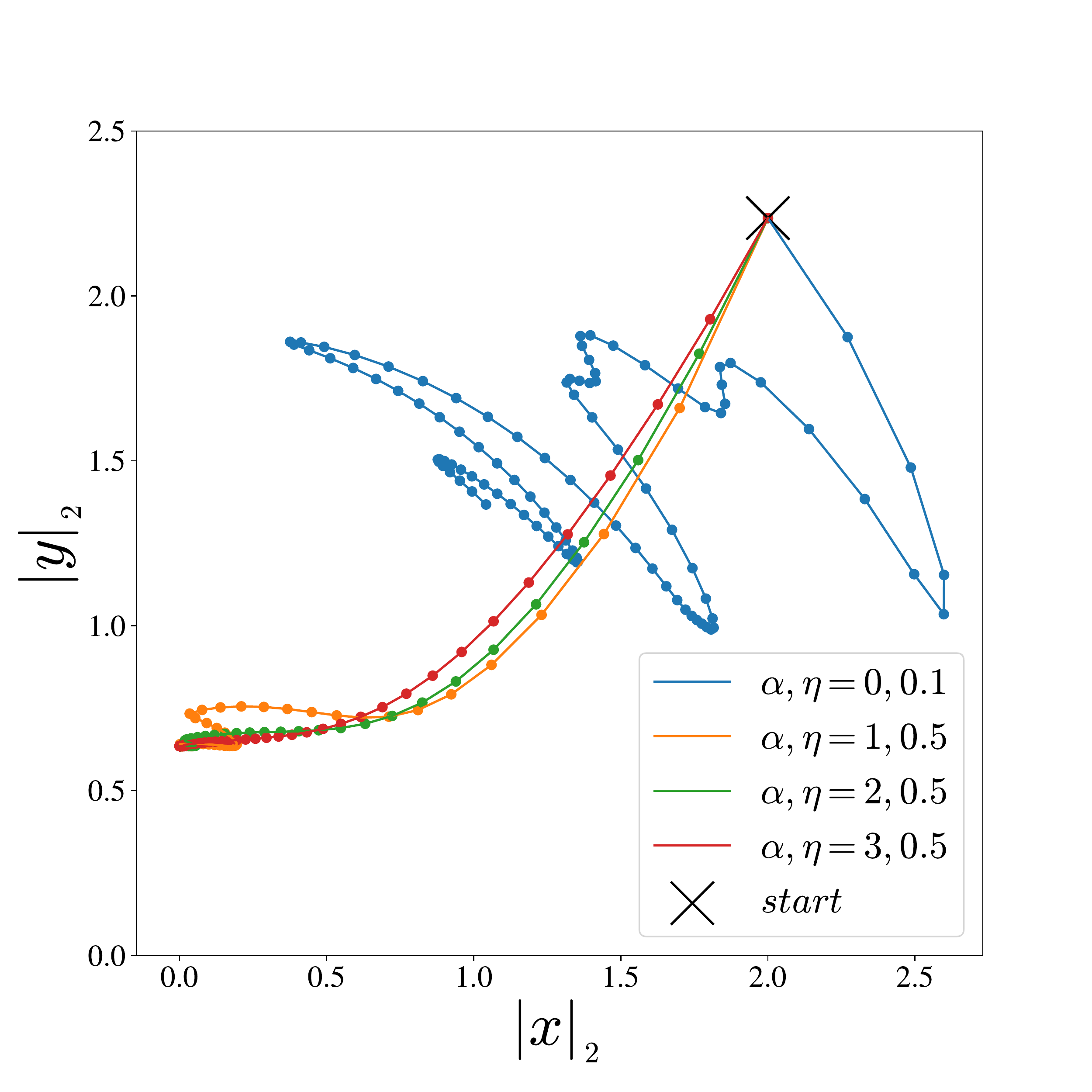}}
\subcaptionbox{\cgo \label{m}}
[.24 \textwidth]{\includegraphics[width=\linewidth]{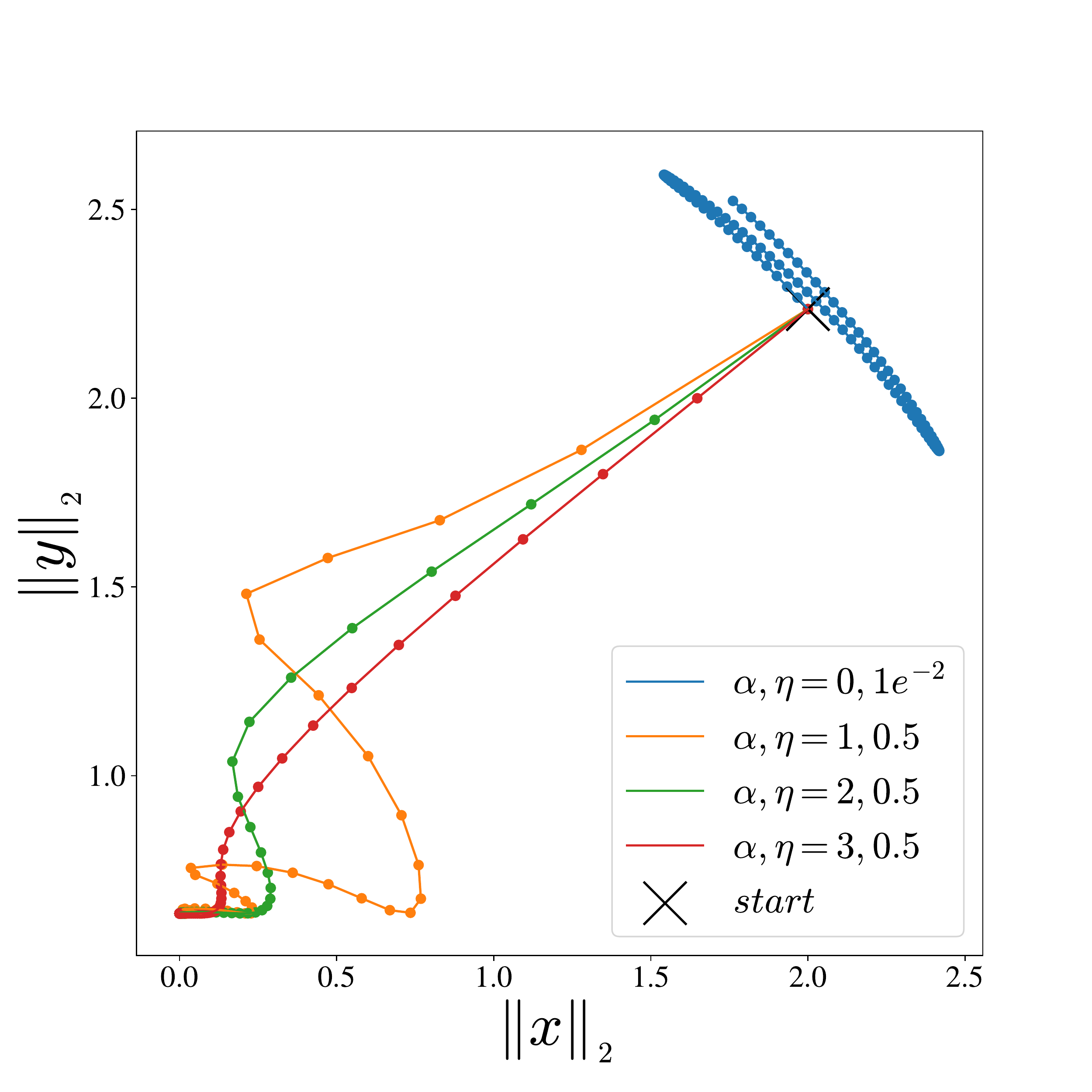}}\subcaptionbox{\ocgo\label{n}}
[.24 \textwidth]{\includegraphics[width=\linewidth]{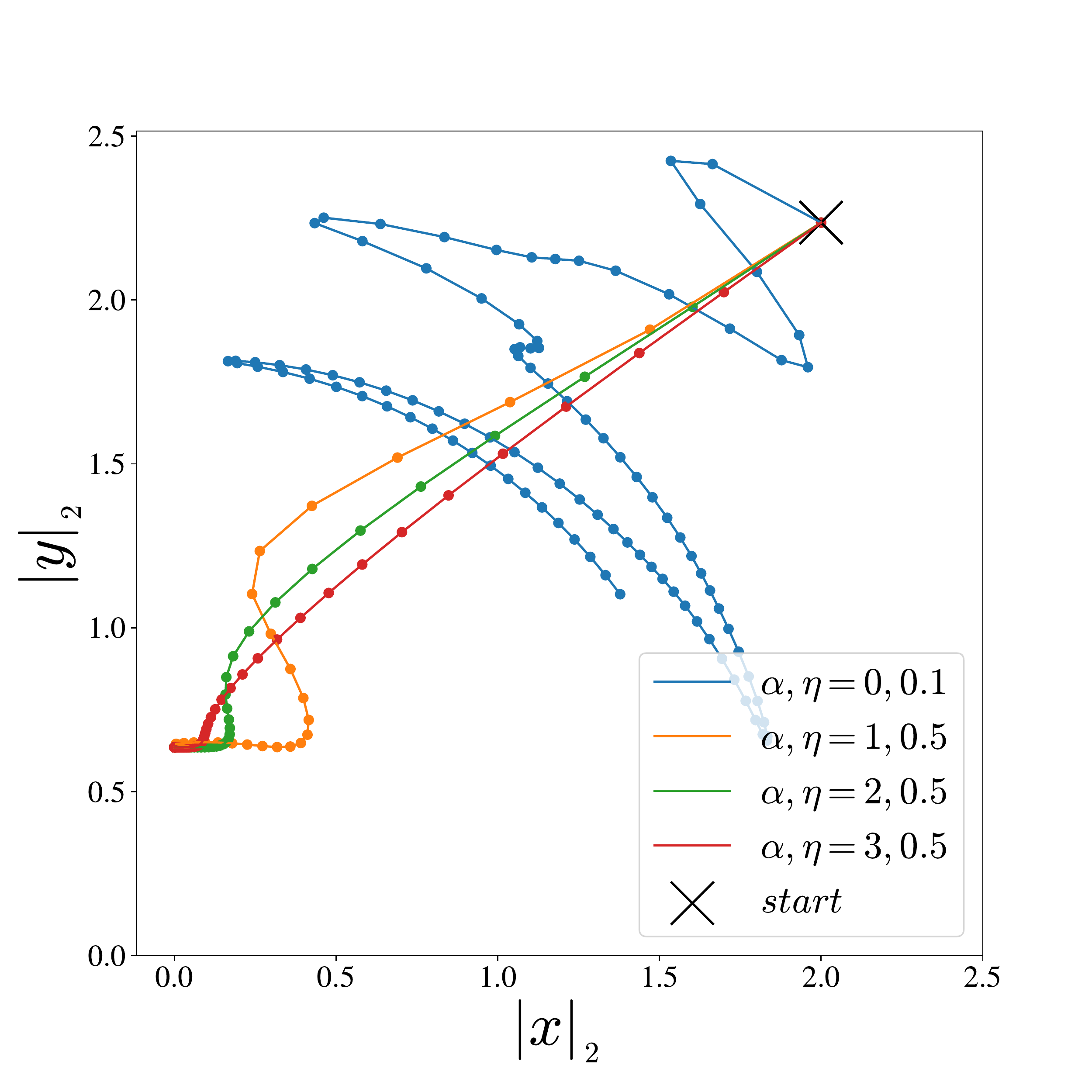}}\subcaptionbox{\cgo\label{o}}
[.24 \textwidth]{\includegraphics[width=1\linewidth]{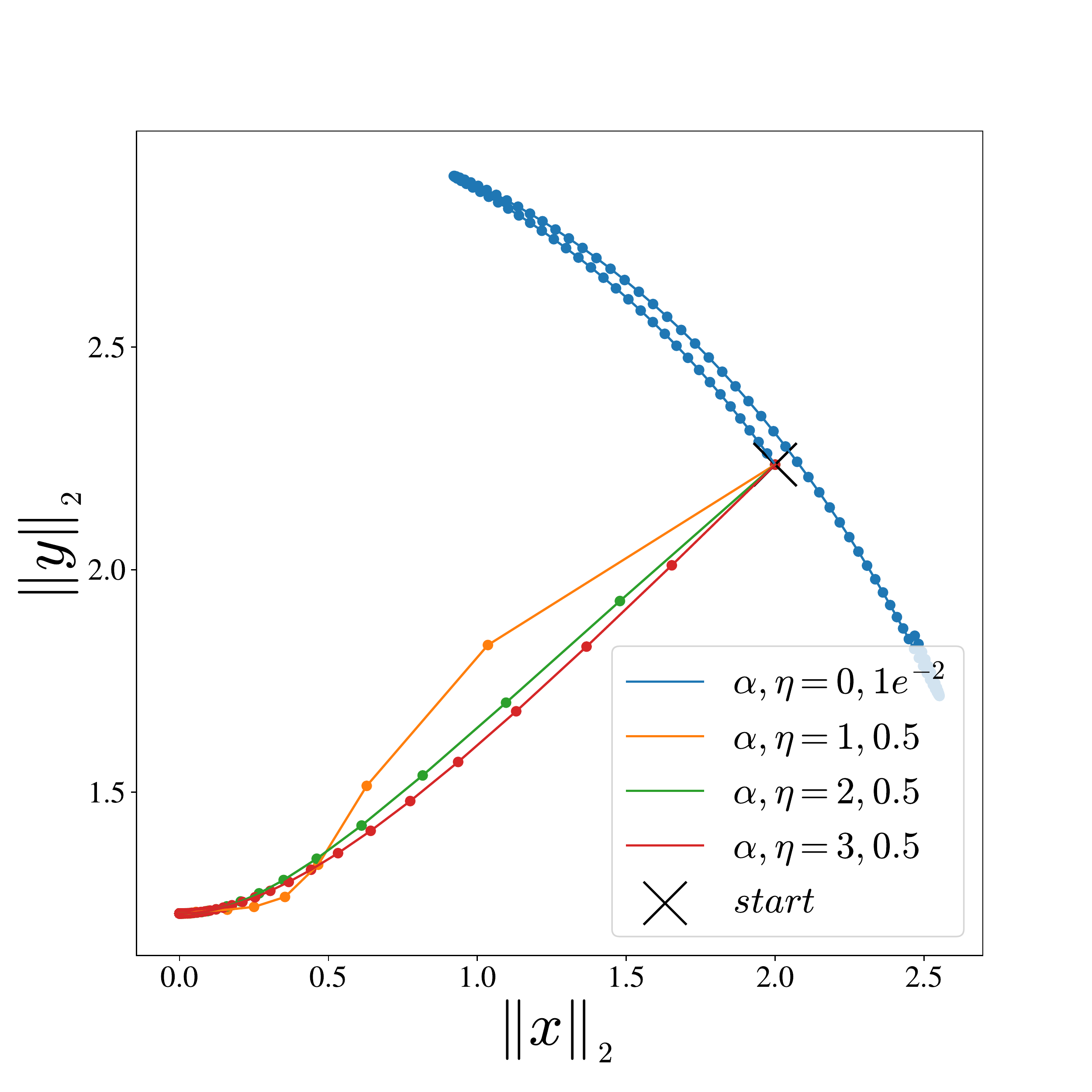}}\subcaptionbox{\ocgo\label{p}}
[.24 \textwidth]{\includegraphics[width=1\linewidth]{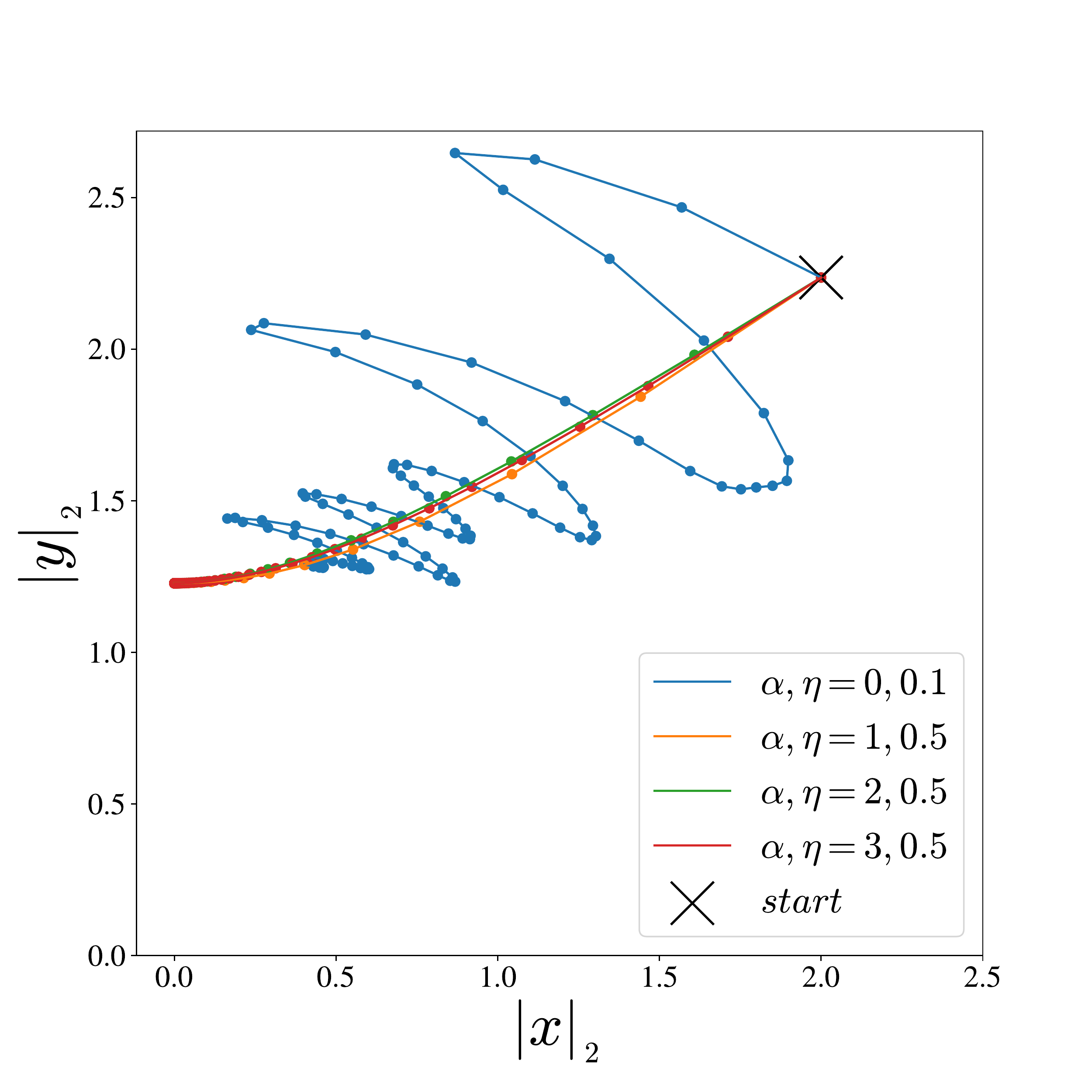}}
\caption{\cgo and \ocgo on bilinear function family $f(x,y) = x^\top A y, x\in \Real^4,y\in \Real^5$ for 100 iterations. In each row, the $1^{st}$ and $2^{nd}$ as well as the $3^{rd}$ and $4^{th}$ figures correspond to the same sample of $A$}
\end{figure}

\end{document}